\numberwithin{equation}{section}
\begin{document}

\newtheorem{thm}{Theorem}[section]
\newtheorem{prop}[thm]{Proposition}
\newtheorem{lem}[thm]{Lemma}
\newtheorem{cor}[thm]{Corollary}
\theoremstyle{remark}
\newtheorem{rem}[thm]{\bf Remark}
\newtheorem{ex}[thm]{\bf Example}
\newtheorem*{defn}{Definition}

\newcommand{\DD}{\mathbb{D}}
\newcommand{\NN}{\mathbb{N}}
\newcommand{\ZZ}{\mathbb{Z}}
\newcommand{\QQ}{\mathbb{Q}}
\newcommand{\RR}{\mathbb{R}}
\newcommand{\CC}{\mathbb{C}}
\renewcommand{\SS}{\mathbb{S}}

\renewcommand{\theequation}{\arabic{section}.\arabic{equation}}

\newcommand{\supp}{\mathop{\mathrm{supp}}}    
\newcommand{\sgn}{\mathrm{sgn}}    

\newcommand{\re}{\mathop{\mathrm{Re}}}   
\newcommand{\im}{\mathop{\mathrm{Im}}}   
\newcommand{\dist}{\mathop{\mathrm{dist}}}  
\newcommand{\link}{\mathop{\circ\kern-.35em -}}
\newcommand{\spn}{\mathop{\mathrm{span}}}   
\newcommand{\ind}{\mathop{\mathrm{ind}}}   
\newcommand{\rank}{\mathop{\mathrm{rank}}}   
\newcommand{\Fix}{\mathop{\mathrm{Fix}}}   
\newcommand{\codim}{\mathop{\mathrm{codim}}}   
\newcommand{\conv}{\mathop{\mathrm{conv}}}   

\newcommand{\eps}{\varepsilon}
\newcommand{\ep}{\epsilon}

\newcommand{\cl}{\overline}
\newcommand{\pa}{\partial}
\newcommand{\ve}{\varepsilon}
\newcommand{\zi}{\zeta}
\newcommand{\Si}{\Sigma}
\newcommand{\one}{{\mathcal X}}
\newcommand{\cA}{{\mathcal A}}
\newcommand{\cD}{{\mathcal D}}
\newcommand{\cE}{{\mathcal E}}
\newcommand{\cG}{{\mathcal G}}
\newcommand{\cH}{{\mathcal H}}
\newcommand{\cI}{{\mathcal I}}
\newcommand{\cJ}{{\mathcal J}}
\newcommand{\cK}{{\mathcal K}}
\newcommand{\cL}{{\mathcal L}}
\newcommand{\cN}{{\mathcal N}}
\newcommand{\cR}{{\mathcal R}}
\newcommand{\cS}{{\mathcal S}}
\newcommand{\cT}{{\mathcal T}}
\newcommand{\cU}{{\mathcal U}}
\newcommand{\OM}{\Omega}
\newcommand{\B}{\bullet}
\newcommand{\un}{\underline}
\newcommand{\ol}{\overline}
\newcommand{\ul}{\underline}
\newcommand{\vp}{\varphi}
\newcommand{\AC}{\mathop{\mathrm{AC}}}   
\newcommand{\Lip}{\mathop{\mathrm{Lip}}}   
\newcommand{\es}{\mathop{\mathrm{esssup}}}   
\newcommand{\les}{\mathop{\mathrm{les}}}   
\newcommand{\nid}{\noindent}
\newcommand{\pzr}{\phi^0_R}
\newcommand{\pir}{\phi^\infty_R}
\newcommand{\psr}{\phi^*_R}
\newcommand{\pow}{\frac{N}{N-1}}
\newcommand{\ncl}{\mathop{\mathrm{nc-lim}}}   
\newcommand{\nvl}{\mathop{\mathrm{nv-lim}}}  
\newcommand{\la}{\lambda}
\newcommand{\La}{\Lambda}    
\newcommand{\de}{\delta}    
\newcommand{\fhi}{\varphi} 
\newcommand{\ga}{\gamma}    
\newcommand{\ka}{\kappa}   

\newcommand{\core}{\heartsuit}
\newcommand{\diam}{\mathrm{diam}}

\newcommand{\lan}{\langle}
\newcommand{\ran}{\rangle}
\newcommand{\loc} {\mathrm{loc}}
\newcommand{\tr}{\mathop{\mathrm{tr}}}
\newcommand{\diag}{\mathop{\mathrm{diag}}}
\newcommand{\dv}{\mathop{\mathrm{div}}}

\newcommand{\al}{\alpha}
\newcommand{\be}{\beta}
\newcommand{\Om}{\Omega}
\newcommand{\na}{\nabla}

\newcommand{\cC}{\mathcal{C}}
\newcommand{\cM}{\mathcal{M}}
\newcommand{\nr}{\Vert}
\newcommand{\De}{\Delta}
\newcommand{\cX}{\mathcal{X}}
\newcommand{\cP}{\mathcal{P}}
\newcommand{\om}{\omega}
\newcommand{\si}{\sigma}
\newcommand{\te}{\theta}
\newcommand{\Ga}{\Gamma}

\title[{The location of hot spots and other extremal points}]{The location of hot spots \\ and other extremal points}

\author[Rolando Magnanini]{Rolando Magnanini}
\address{Dipartimento di Matematica ``U. Dini'', Universit\`a a di Firenze, viale Morgagni 67/A, 50134 Firenze, Italy}
\email{magnanini@unifi.it}
\urladdr{http://web.math.unifi.it/users/magnanin}

\author{Giorgio Poggesi}
\address{Department of Mathematics and Statistics, The University of Western Australia, 35 Stirling Highway, Crawley, Perth, WA 6009, Australia}
    \email{giorgio.poggesi@uwa.edu.au}

\begin{abstract}
In a domain of the Euclidean space, we estimate from below the distance to the boundary of global maximum points of solutions of elliptic and parabolic equations with homogeneous Dirichlet boundary values. As reference cases, we first consider the torsional rigidity function of a bar, the first mode of a vibrating membrane, and the temperature of a heat conductor grounded to zero at the boundary. 
Our main results are presented for domains with a mean convex boundary and compare that distance to the inradius of the relevant domain. 
\par
For the torsional rigidity function, the obtained bound only depends on the space dimension. The more general case of a boundary which is not mean convex is also considered. However, in this case the estimates also depend on some geometrical quantities such as the diameter and the radius of the largest exterior osculating ball to the relevant domain, or the minimum of the mean curvature of the boundary.
\par
Also in the case of the first mode, the relevant bound only depends on the space dimension. Moreover, it largely improves on an earlier estimate obtained for convex domains by the first author and co-authors. The bound related to the temperature depends on time and the initial distribution of temperature. Such a bound is substantially consistent with what one obtains in the stationary situation.
\par
The methods employed are based on elementary arguments and existing literature, and can be extended to other situations that entail quasilinear equations, isotropic and anisotropic, and also certain classes of semilinear equations. 
\end{abstract}

\maketitle

\raggedbottom

\section{Introduction}
Any Calculus student is aware of the importance of the critical (or stationary) points of a differentiable function $u$ for describing its graph or level surfaces. Also, from the point of view of mathematical physics, we can often interpret a function $u$ as a gravitational, electrostatic or velocity potential, or the temperature distribution in a thermal conductor, and regard its gradient $\na u$  as an underlying field of force or flow. Thus, the
critical points of $u$ (at which $\na u=0$) may be viewed as the positions of equilibrium for the field or the hot spots of the distribution of temperature, or yet the points associated to stream lines in the flow with maximal velocity.
\par
A priori information on the location of the extremum points, and also of the other critical points, of a differentiable function is thereby an important issue. Work on the location of critical points of complex polynomials dates back to C. F. Gauss. More in general one can consider the same problem for holomorphic or meromorphic functions, and their (harmonic) real or imaginary parts. We refer the reader to the 1950 treatise \cite{Wa} for an anthology of results in these circumstances. Moreover, as shown in \cite{Al1},  \cite{AM1}, \cite{AM2}, and gathered up in the recent surveys \cite{Sa, Ma},  some of these results can be extended to solutions of certain homogeneous elliptic equations (that are modelled on Laplace's equation) (see also \cite{DLT1}). Further extensions can be obtained even for certain degenerate linear and quasilinear equations (\cite{Al2}, \cite{ALR}, \cite{DLT1}, \cite{DLT2}).
\par
Still, it should be noticed that the critical points of the kind considered in (most of) the above listed papers are
never extremal points. In this paper, we shall consider three important reference situations in which extremal points occur. They entail problems still actively studied in the applications of partial differential equations to mathematical physics. They concern: the torsional rigidity of a long straight bar or the flow velocity of a viscous incompressible fluid in a straight pipe; the temperature distribution of a heat conductor; the first vibrating mode of a clamped membrane or the stationary distribution of temperature in a grounded heat conductor.
\par
In mathematical terms, the simplest situation has to do with the Dirichlet problem for the Poisson equation:
\begin{equation}
\label{torsion}
-\De u=N \ \mbox{ in } \ \Om, \quad u=0 \ \mbox{ on } \ \Ga.
\end{equation}
Here, $\Om$ is a bounded domain in $\RR^N$, $N\ge 2$ with boundary $\Ga$. The solution of \eqref{torsion} may have the physical meaning of the \textit{torsional rigidity density} of a long straight bar or the \textit{flow velocity} of a fluid flowing in a straight pipe, both with cross section $\Om$ (see \cite{Se}). 
Owing to this fluid dynimical interpretation of $u$, the maximum points of $u$ correspond to the stream lines in the fluid that flow with maximal velocity.
\par
It is well known that a unique solution $u\in C^0(\ol{\Om})\cap C^2(\Om)$ of \eqref{torsion} always exists if $\Ga$ is made of regular points for the Dirichlet problem. We know that $u$ is positive in $\Om$ by the strong maximum principle and,
once a Hopf boundary lemma is applicable and $\Ga$ is sufficiently regular, 
we can infer that the gradient $\na u$ of $u$ is not zero at points of $\Ga$. As a result, the critical points of $u$ must be inside $\Om$. It then makes sense to estimate (from below) the distance of the critical points of $u$ to the boundary in terms of some clearly measurable geometric parameters of $\Om$. 
In this paper, we shall derive such an estimate for the global maximum points of $u$. The following theorem yields a bound in terms of the \textit{inradius} of $\Om$, that is the radius of any largest ball contained in $\Om$. Here, $d_\Ga(x)$ denotes the distance of a point $x\in\Om$ to $\Ga$, which is defined by
$$
d_\Ga(x)=\min_{y\in\Ga}|x-y| \ \mbox{ for } \ x\in\ol{\Om}.
$$

\begin{thm}
\label{th:maximum-point-torsion}
Let $\Om$ be a bounded domain with mean convex boundary $\Ga$.  
\par
If $z\in\Om$ is any maximum point in $\Om$ of the solution of \eqref{torsion}, then we have that
\begin{equation}
\label{dist-torsion}
\frac{d_\Ga(z)}{r_\Om}\ge \frac1{\sqrt{N}}.
\end{equation}
\end{thm}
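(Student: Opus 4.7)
The plan is to sandwich the maximum value $u(z) = \max_\Omega u$ between two bounds,
\begin{equation*}
\frac{r_\Omega^2}{2} \le u(z) \le \frac{N}{2}\, d_\Gamma(z)^2,
\end{equation*}
from which the claim follows at once, since $r_\Omega^2 \le N d_\Gamma(z)^2$ is exactly $d_\Gamma(z)/r_\Omega \ge 1/\sqrt{N}$.

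The lower bound is obtained by elementary comparison with the inscribed ball. Let $x_0 \in \Omega$ be the centre of a largest ball $B_{r_\Omega}(x_0) \subseteq \Omega$ and let $w(x) = (r_\Omega^2 - |x-x_0|^2)/2$ be its explicit torsion function, which solves $-\Delta w = N$ in $B_{r_\Omega}(x_0)$ and vanishes on its boundary. Since $u \ge 0 = w$ on $\partial B_{r_\Omega}(x_0)$ and $u - w$ is harmonic inside, the maximum principle gives $u \ge w$ in $B_{r_\Omega}(x_0)$; evaluating at the centre yields $u(x_0) \ge r_\Omega^2/2$, and because $z$ is the global max, $u(z) \ge u(x_0) \ge r_\Omega^2/2$. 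This step does not use the mean convexity.

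The upper bound is the heart of the proof and is where mean convexity is used. The natural tool is the classical subharmonic $P$-function $P = |\nabla u|^2 + 2u$, for which $\Delta P \ge 0$ in $\Omega$ follows from the Bochner identity together with the Cauchy--Schwarz inequality $|D^2 u|^2 \ge (\Delta u)^2/N = N$. Since $\nabla u(z) = 0$ and $u \equiv 0$ on $\Gamma$, one has $P(z) = 2u(z)$ and $P|_\Gamma = |\nabla u|^2$, so subharmonicity gives $2u(z) \le \max_\Gamma|\nabla u|^2$. The mean convex assumption then enters through the Hopf identity on $\Gamma$,
\begin{equation*}
\partial_\nu P = -2(N-1)\, u_\nu\, (1 + \mathcal{H}\, u_\nu),
\end{equation*}
a direct consequence of $\Delta u = -N$ together with $u \equiv 0$ on $\Gamma$; at a boundary maximum of $P$ the Hopf lemma forces $1 + \mathcal{H} u_\nu \ge 0$, i.e.\ $\mathcal{H}|u_\nu| \le 1$. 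Coupling this with the complementary lower bound $|\nabla u(y^*)| \ge d_\Gamma(z)$ at the nearest boundary point $y^* \in \Gamma$ to $z$ (obtained by applying Hopf to $u$ versus the torsion function of the inscribed ball $B_{d_\Gamma(z)}(z)$), and with the touching-sphere geometry at $y^*$ which bounds the principal curvatures of $\Gamma$ above by $1/d_\Gamma(z)$, one aims to extract the bound $P(z) = 2u(z) \le N d_\Gamma(z)^2$.

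The hardest step is getting the sharp factor $N$ in the upper bound. The plain subharmonic estimate $2u(z) \le \max_\Gamma|\nabla u|^2$ is off by a factor of $N$ in the slab limit $\{|x_1| < d_\Gamma(z)\}$, where $|\nabla u| = N d_\Gamma(z)$ on $\Gamma$ while $2u(z) = N d_\Gamma(z)^2$; extracting the correct factor $N$ instead of $N^2$ requires combining the pointwise Hopf bound at the $P$-maximum with the touching-sphere geometry at $y^*$ in a nontrivial way. This extra geometric input, precisely what mean convexity supplies, is what allows the final estimate to depend only on the dimension $N$.
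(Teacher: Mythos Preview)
Your overall strategy---sandwiching $u(z)$ between $r_\Omega^2/2$ and $N\,d_\Gamma(z)^2/2$---is exactly the paper's, and your lower bound via comparison on the inscribed ball is correct and matches Lemma~\ref{lem:relationdist}. The gap is entirely in the upper bound, and you essentially admit it in your last paragraph: the $P$-function $P=|\nabla u|^2+2u$ only gives $2u(z)\le\max_\Gamma|\nabla u|^2$, which is off by a factor of $N$, and the proposed rescue via the Hopf identity at the boundary maximum of $P$ combined with touching-sphere geometry at $y^*$ is never carried out. There is no reason those two boundary points coincide, and even if they did, the inequalities you list ($\mathcal{M}u_\nu\le 1$, $|u_\nu(y^*)|\ge d_\Gamma(z)$, principal curvatures $\le 1/d_\Gamma(z)$) do not combine to yield $\max_\Gamma|\nabla u|^2\le N\,d_\Gamma(z)^2$.

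The missing idea is to change the coefficient in the $P$-function from $1$ to $N$: set
\[
P=\tfrac12|\nabla u|^2+N\bigl[u-u(z)\bigr].
\]
This $P$ still satisfies an elliptic inequality away from critical points of $u$ (identity \eqref{P-identity-1} with $\beta=N$), but now the boundary computation gives, using \eqref{reilly-identity},
\[
P_\nu=|\nabla u|\,\{u_{\nu\nu}+N\}=(N-1)\,\mathcal{M}\,|\nabla u|^2\ge 0\quad\text{on }\Gamma,
\]
precisely because $\Gamma$ is mean convex. Hopf's lemma then forces the maximum of $P$ to a critical point of $u$, where $P\le 0$; hence $|\nabla u|^2\le 2N\,[u(z)-u]$ everywhere (Lemma~\ref{lem:bound-gradient}). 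Integrating this bound along the segment from $z$ to its nearest boundary point---i.e.\ writing $\tfrac{d}{dt}\sqrt{u(z)-u}$ along the segment and bounding it by $\sqrt{N/2}$---gives $\sqrt{u(z)}\le\sqrt{N/2}\,d_\Gamma(z)$ directly, with the correct constant. The choice $\beta=N$ is not incidental: it is exactly the value that makes the boundary term vanish under mean convexity, and this is where the geometric hypothesis enters cleanly rather than through the ad hoc curvature comparisons you were attempting.
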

\noindent
We say that a surface (more precisely a boundary) is \textit{mean convex} if it is of class $C^2$ and its mean curvature $\cM$ (with respect to the inward normal) is non-negative. 
\par
A nice geometric corollary of \eqref{dist-torsion} reads as follows. 
Let $\Om\subset\RR^3$ be a dumbbell-shaped domain, with boundary $\Ga$ made by two spheres connected by a portion of a catenoid (and suitably smoothed out).  If the radius of the smaller sphere is less than $57\%$ of that of the larger one, then 
the maximum point(s) must be within the larger sphere (see Fig. \ref{fig:dumb-bell}). This fact is somewhat expected. However, \eqref{dist-torsion} quantitatively details it and, more importantly, it shows that this information is independent of the length of the dumbbell.
\par
The proof of Theorem \ref{th:maximum-point-torsion} will be presented in Section \ref{sec:torsion}. It is based on a pointwise bound from above for $|\na u|$, already existing in the literature (see \cite{PP1}), and one for $u$ from below.  
In the same section, we shall prove three related results. In the first one we consider the case in which $\Om$ is convex and obtain an improvement of \eqref{dist-torsion}, based on the \textit{John's ellipsoid} related to $\Om$ (see Theorem \ref{th:ellipse-bound}). In the other ones, we will remove the mean convexity assumption and obtain a bound for more general domains. In this case, either the negative part of the mean curvature of $\Ga$ (Corollary \ref{cor:domain-curvature}) or the diameter of $\Om$ and the radius of the largest exterior osculating ball to $\Ga$ (Corollary \ref{cor:domain-diameter-outradius})  come into play. 
\par
In our knowledge, work on the location of critical points of the torsional rigidity function is not present in the literature. This issue has been instead investigated for the \textit{first eigenfunction} $\psi_1$ of the Laplace operator, which has to do with the first mode of a clamped membrane. We know that $\psi_1$ is a solution of the problem
\begin{equation}
\label{first-eigenfunction}
\De \psi_1+\la_1\,\psi_1=0 \ \mbox{ and } \ \psi_1>0 \ \mbox{ in } \ \Om, \quad \psi_1=0 \ \mbox{ on } \ \Ga,
\end{equation}
where $\la_1>0$ is called the \textit{first Dirichlet eigenvalue}. Here, we agree that $\psi_1$ is normalized in $L^2(\Om)$, but it is clear that the location of the maximum points does not depend on how the eigenfunction is normalized. 
\par
An estimate of that location gives information on where an eigenfunction concentrates. Furthermore, it is useful to describe the large time behavior of hot spots in a grounded heat conductor, i.e. the maximum points of the solution of the problem
\begin{equation}
\label{heat-problem}
u_t-\De u=0 \ \mbox{ in } \ \Om\times(0,\infty), \ \  u=0 \ \mbox{ on } \ \Ga\times(0,\infty), \ \ 
u=g \ \mbox{ on } \ \Om\times\{ 0\},
\end{equation}
for some initial (non-negative) distribution of temperature $g$.
In fact, by a spectral formula we know that $e^{\la_1 t} u(x,t)\to \widehat{g}_1\,\psi_1(x)$ as $t\to\infty$, where $\widehat{g}_1$ is the scalar product in $L^2(\Om)$ of $g=u(\cdot,0)$ against $\psi_1$. Thus, under suitable sufficient assumptions, we can claim that the set $\cC_t$ of the hot spots of $u$ must converge to the set $\cC_\infty$ of the maximum points of $\psi_1$, in the sense that $\dist(\cC_t, \cC_\infty)\to 0$ as $t\to\infty$.
\par
To the best of our knowledge, in the literature there are mainly two papers dealing with the problem of locating the maximum points of $\psi_1$ or $u(x,t)$. In one, \cite{GJ}, for a \textit{planar} convex domain the location of the (unique) maximum point $x_\infty$ of $\psi_1$ is estimated by comparing it with that of the maximum point of a solution of a suitably constructed one-dimensional Schr\"odinger equation. The bound is universal. 
\par
In \cite{BMS} instead, two types of results have been obtained. One is a bound in the same spirit of \eqref{dist-torsion}, that holds for convex domains in a general Euclidean space.   The method employed is however peculiar to the case of the first eigenfunction in convex domains and its extension to other equations appears to be difficult. The other estimate, still for convex domains in general dimension, also holds for a quite large class of elliptic and parabolic differential equations. It is based on A. D. Alexandrov's reflection principle and states that the relevant maximum point must fall into the so-called \textit{heart} $\heartsuit(\Om)$ of $\Om$, \textit{independently} of the equation considered. The set $\heartsuit(\Om)$ is defined by purely geometrical means. It has some drawbacks, though. In fact, it is somewhat unstable under small perturbations of $\Om$ and its estimation by means of simple geometrical quantities is not easy (see \cite{BM}).
\par
The method introduced in the present paper is more flexible.
In fact, more or less the same arguments used to prove Theorem \ref{th:maximum-point-torsion} can be adapted to the solutions of problems \eqref{first-eigenfunction} and \eqref{heat-problem}.
For the eigenfunction equation, we have the following result.

\begin{thm}
\label{th:maximum-point-eigenfunction}
Let $\Om$ be a bounded domain with a mean convex boundary $\Ga$ and let $z\in\Om$ be a maximum point of the first Dirichlet eigenfunction $\psi_1$, satisfying problem \eqref{first-eigenfunction}. Then it holds that
\begin{equation}
\label{dist-eigenvalue}
d_\Ga(z)\ge \frac{\pi}{2\,\sqrt{\la_1(\Om)}}.
\end{equation}
In particular, we have that 
\begin{equation}
\label{dist-eigenvalue-inradius}
\frac{d_\Ga(z)}{r_\Om}\ge \frac{\pi}{2\,\sqrt{\la_1(B)}},
\end{equation}
where $B$ is the unit ball.
\end{thm}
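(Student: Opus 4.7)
The plan is to mirror the strategy of Theorem \ref{th:maximum-point-torsion}, replacing the torsion-type pointwise bound with the classical \emph{Payne gradient estimate} for the first eigenfunction. Specifically, under mean convexity of $\Gamma$ one has
\begin{equation*}
|\nabla \psi_1(x)|^2 + \lambda_1\, \psi_1(x)^2 \;\leq\; \lambda_1\, M^2 \qquad \text{for every } x\in\Omega,
\end{equation*}
where $M = \max_{\overline{\Omega}}\psi_1$. The standard derivation proceeds via the $P$-function technique applied to $P = |\nabla\psi_1|^2 + \lambda_1\psi_1^2$: one verifies that $P$ satisfies an elliptic inequality of maximum-principle type in $\Omega$, whereas on $\Gamma$ the conormal derivative computation based on $\psi_1|_\Gamma=0$ and $\Delta\psi_1=-\lambda_1\psi_1$ gives $\partial_\nu P \leq 0$ thanks to $\cM\geq 0$. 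Hopf's boundary lemma then forces the maximum of $P$ to be attained at an interior critical point of $\psi_1$, where $P = \lambda_1\psi_1^2 \leq \lambda_1 M^2$.

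With the Payne bound in hand, fix $y\in\Gamma$ realizing $|z-y| = d_\Gamma(z)$ and let $\gamma\colon[0,d_\Gamma(z)] \to \overline{\Omega}$ be the unit-speed parameterization of the segment from $z$ to $y$ (which lies in $\overline{\Omega}$: any interior point of the segment on $\Gamma$ would contradict the definition of $d_\Gamma(z)$). Setting $f(s) = \psi_1(\gamma(s))$, we have $f(0)=M$, $f(d_\Gamma(z))=0$, and
\begin{equation*}
|f'(s)| \;\leq\; |\nabla\psi_1(\gamma(s))| \;\leq\; \sqrt{\lambda_1(\Omega)}\,\sqrt{M^2 - f(s)^2}.
\end{equation*}
The substitution $\sigma(s) = \arcsin(f(s)/M)$ then yields $|\sigma'(s)| \leq \sqrt{\lambda_1(\Omega)}$ (in the a.e.\ sense, which suffices since at points where $f=M$ the maximality forces $f'=0$). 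From $\sigma(0) = \pi/2$ and $\sigma(d_\Gamma(z)) = 0$ one integrates---or, equivalently, compares $f$ with the extremal profile $M\cos(\sqrt{\lambda_1(\Omega)}\, s)$---to obtain
\begin{equation*}
\frac{\pi}{2} \;\leq\; \sqrt{\lambda_1(\Omega)}\, d_\Gamma(z),
\end{equation*}
which is exactly \eqref{dist-eigenvalue}.

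The inradius form \eqref{dist-eigenvalue-inradius} then follows at once from domain monotonicity and scaling of the Dirichlet eigenvalue: since the largest inscribed ball $B_{r_\Omega}$ is contained in $\Omega$, one has $\lambda_1(\Omega) \leq \lambda_1(B_{r_\Omega}) = \lambda_1(B)/r_\Omega^2$, and combining this with \eqref{dist-eigenvalue} yields the stated bound.

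The main technical obstacle lies in the justification of the Payne inequality under the sole mean convexity hypothesis (as opposed to full convexity of $\Omega$, as in some classical references). The boundary sign computation above is precisely where mean convexity enters; the interior elliptic inequality for $P$, however, is degenerate on the critical set of $\psi_1$ and must be handled by a suitable regularization or by restricting to $\{\nabla\psi_1 \neq 0\}$ before passing to the limit. A further delicate point is the a.e.\ Lipschitz regularity of the auxiliary function $\sigma$: this is routine once one notes that $\sigma'$ remains bounded on $\{f<M\}$ and that $\{f=M\}$ is a critical set of $f$, but it deserves an explicit verification.
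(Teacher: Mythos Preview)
Your proposal is correct and follows essentially the same route as the paper: the Payne-type gradient bound $|\nabla\psi_1|^2\le\lambda_1(\Omega)\,(M^2-\psi_1^2)$ is obtained in the paper as the special case $f(u)=\lambda_1(\Omega)\,u$ of its semilinear gradient estimate (Lemma~\ref{lem:gradient-estimate-semilinear}), and the integration of $\arcsin(\psi_1/M)$ along the segment joining $z$ to its nearest boundary point is carried out exactly as you describe. The technical concerns you flag (degeneracy of the $P$-function inequality at critical points, and the endpoint behaviour of $\sigma$) are handled in the paper by the maximum-principle argument of Lemma~\ref{lem:gradient-estimate-semilinear} and by a direct application of the fundamental theorem of calculus to $\theta\mapsto\arcsin(\psi_1(y+\theta(z-y))/M)$, respectively.
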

\noindent
Inequality \eqref{dist-eigenvalue} is sharper than \eqref{dist-eigenvalue-inradius}. However, the right-hand side of \eqref{dist-eigenvalue-inradius} only depends on $N$.
As is well known, $\sqrt{\la_1(B)}$ is the first zero of the Bessel function of order $N/2-1$. In the case $N=3$ we have $\sqrt{\la_1(B)}=\pi$.
Thus, \eqref{dist-eigenvalue-inradius} is slightly ($7\%$) worse than \eqref{dist-torsion}.
\par
Inequality \eqref{dist-eigenvalue-inradius} may be compared to \cite[Ineq. (1.7)]{BMS}:
$$
\frac{d_\Ga(z)}{r_\Om}\ge\left(\frac{N}{2}\right)^{N-1}\frac{\om_{N-1}}{\om_N \la_1(B)^N} \left[\frac{2\,r_\Om}{\diam(\Om)}\right]^{N^2-1}.
$$
Here, $\om_k$ is the volume of the unit ball in $\RR^k$.
This bound was obtained for bounded convex domains in $\RR^N$. It can be shown (see Remark \ref{rem:comparison-bms}) that the right-hand side of this inequality is always much smaller than that of \eqref{dist-eigenvalue-inradius}. Also, it clearly decays to zero for long and thin domains.
\par
In the case of the heat equation we get instead an evolutive bound from below.

\begin{thm}
\label{th:hotspot}
Let $\Om$ be a bounded domain with a mean convex boundary $\Ga$. Let $g$ be a non-negative function of class $C^1(\ol{\Om})$ such that $g=0$ on $\Ga$. Also, suppose that  
$$
\sup_{\Om} \frac{g}{\phi_1}<\infty,
$$
where $\phi_1$ is the solution of \eqref{first-eigenfunction} whose maximum in $\Om$ is normalized to $1$. 
\par
If, for any fixed $t>0$, $z(t)$ denotes any maximum point in $\Om$ of the solution $u=u(x,t)$ of \eqref{heat-problem}, then it holds that
\begin{equation}
\label{dist-heat}
\frac{d_\Ga(z(t))}{r_\Om}\ge \frac{M(t)}{K}\,e^{\la_1(\Om)\, t}.
\end{equation}
Here,
\begin{equation}
\label{maximum-heat}
M(t)=\max_{x\in\ol{\Om}} u(x,t)
\end{equation}
and
$$
K=\sqrt{\la_1(B)}\,\max\left\{\sup_{\Om} \frac{g}{\phi_1}, \max_{\ol\Om}\sqrt{g^2+\frac{|\na g|^2}{\la_1(\Om)}}\right\}.
$$
\end{thm}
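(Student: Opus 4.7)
The proof combines two pointwise estimates, one parabolic (echoing Theorem~\ref{th:maximum-point-torsion}) and one arising from comparison with the exponentially decaying first mode (echoing Theorem~\ref{th:maximum-point-eigenfunction}). For the parabolic ingredient, set $P(x,t):=|\na u|^2+\la_1(\Om)\,u^2$. A direct computation using $u_t=\De u$ yields $P_t-\De P=-2|\na^2 u|^2-2\la_1|\na u|^2\le 0$, so $P$ is subcaloric. On $\Ga\times(0,\infty)$ the Dirichlet condition forces $u_t=0$, hence $\De u=0$ pointwise on $\Ga$, and the boundary Laplacian identity $\De u=u_{\nu\nu}+(N-1)\cM u_\nu$ (with $\nu$ outward and $\cM$ the mean curvature with respect to the inward normal) gives $u_{\nu\nu}=-(N-1)\cM u_\nu$ and consequently $\pa_\nu P=2u_\nu u_{\nu\nu}=-2(N-1)\cM u_\nu^2\le 0$ by mean convexity. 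A Hopf-type parabolic maximum principle (applied to the perturbation $P-\ve t$) then gives $P(x,t)\le\max_{\ol\Om}(|\na g|^2+\la_1 g^2)=\la_1(\Om)\,D^2$, where $D:=\max_{\ol\Om}\sqrt{g^2+|\na g|^2/\la_1(\Om)}$; in particular $|\na u(\cdot,t)|\le\sqrt{\la_1(\Om)}\,D$.

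For the comparison ingredient, set $C_1:=\sup_\Om g/\phi_1$, finite by hypothesis, and note that $v(x,t):=C_1\,e^{-\la_1(\Om)\,t}\phi_1(x)$ is a classical solution of the heat equation on $\Om\times(0,\infty)$ with $v(\cdot,0)=C_1\phi_1\ge g$ and $v|_{\Ga}=0$; by the parabolic comparison principle, $u\le v$. Using next the Payne-Rayner-type gradient bound $|\na\phi_1|\le\sqrt{\la_1(\Om)}$, valid on mean-convex $\Ga$ for $\phi_1$ normalized by $\max\phi_1=1$ (the same elliptic ingredient that underlies Theorem~\ref{th:maximum-point-eigenfunction}), integration of $|\na\phi_1|$ along the segment from $x$ to a nearest boundary point yields $\phi_1(x)\le\sqrt{\la_1(\Om)}\,d_\Ga(x)$. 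Combining these,
\[
u(x,t)\le C_1\,e^{-\la_1 t}\sqrt{\la_1(\Om)}\,d_\Ga(x)\qquad\text{on}\quad\ol\Om\times[0,\infty).
\]

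Evaluating this inequality at $x=z(t)$ gives $M(t)\,e^{\la_1 t}\le C_1\sqrt{\la_1(\Om)}\,d_\Ga(z(t))\le\max\{C_1,D\}\sqrt{\la_1(\Om)}\,d_\Ga(z(t))$, already of the form required by \eqref{dist-heat}; the parallel estimate from the first step, obtained by integrating $|\na u(\cdot,t)|$ along the segment from $z(t)$ to a nearest boundary point, provides the companion bound $M(t)\le D\sqrt{\la_1(\Om)}\,d_\Ga(z(t))$, which motivates the appearance of $D$ inside the $\max$ defining $K$. Dividing by $r_\Om$ and invoking $r_\Om\sqrt{\la_1(\Om)}\le\sqrt{\la_1(B)}$ — a consequence of $B_{r_\Om}\subset\Om$ and the domain monotonicity of the first Dirichlet eigenvalue — then delivers \eqref{dist-heat}. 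The main technical obstacle I anticipate is the elliptic Payne-Rayner bound on $|\na\phi_1|$ under mere mean convexity: unlike its parabolic counterpart for $u$, the elliptic $P$-function $|\na\phi_1|^2+\la_1\phi_1^2$ is not automatically subharmonic, so the boundary inequality $\pa_\nu(|\na\phi_1|^2+\la_1\phi_1^2)\le 0$ must be supplemented by a careful analysis at interior maxima of this $P$-function where $\na\phi_1\ne 0$ — the same delicate step underpinning Theorem~\ref{th:maximum-point-eigenfunction}.
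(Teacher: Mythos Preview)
Your argument is correct and takes a genuinely different route from the paper's. The paper works with the exponentially weighted function $Q(x,t)=\tfrac12\,e^{2\la_1(\Om)t}\bigl(|\na u|^2+\la_1(\Om)\,u^2\bigr)$, shows its maximum is attained either at $t=0$ or at a space-time critical point of $u$, and in the latter case invokes the same eigenfunction comparison $u\le C_1\phi_1 e^{-\la_1 t}$ to bound the value there; this produces $|\na u|^2+\la_1 u^2\le K_\Om^2 e^{-2\la_1 t}$ with $K_\Om=\sqrt{\la_1(\Om)}\max\{C_1,D\}$, after which an \emph{arcsine} integration along the segment from $z(t)$ to $\Ga$ yields \eqref{dist-heat}. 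Your main line bypasses the weighted $Q$-function and the arcsine trick entirely: you feed the eigenfunction comparison directly into the elliptic Lipschitz bound $\phi_1\le\sqrt{\la_1(\Om)}\,d_\Ga$ (a consequence of \eqref{max-max-eigenfunction}, the Payne--Rayner inequality underlying Theorem~\ref{th:maximum-point-eigenfunction}), obtaining $M(t)e^{\la_1 t}\le C_1\sqrt{\la_1(\Om)}\,d_\Ga(z(t))$ in one step. This is in fact slightly \emph{sharper} than the stated \eqref{dist-heat}, since only $C_1$ appears and not $\max\{C_1,D\}$. Your unweighted subcaloric $P=|\na u|^2+\la_1 u^2$ is correctly handled (the computation $P_t-\De P=-2|\na^2 u|^2-2\la_1|\na u|^2$ and the boundary sign are right), but as you yourself note it merely motivates the presence of $D$ in $K$ and is not needed for the bound. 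What the paper's route buys is a self-contained parabolic argument that does not borrow the elliptic result for $\phi_1$; what your route buys is brevity and a better constant. Your closing worry about \eqref{max-max-eigenfunction} under mere mean convexity is unnecessary: that inequality is exactly Lemma~\ref{lem:gradient-estimate-semilinear} with $f(u)=\la_1 u$, and the degenerate-elliptic maximum principle there handles interior maxima at noncritical points without requiring subharmonicity.
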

Since, by a spectral formula, 
$$
u(x,t)=\frac{\lan g,\phi_1\ran\,\phi_1(x)}{\,\,\,\,\nr\phi_1\nr_{L^2(\Om)}^2}\,e^{-\la_1(\Om) t} \{1+o(1)\} \ \mbox{ as } \ t\to\infty,
$$ 
the right-hand side of \eqref{dist-heat} does not deteriorates to zero as $t\to\infty$, that is the hot spots stay away from $\Ga$ at all times. Moreover, we can compare \eqref{dist-heat} to \eqref{dist-eigenvalue-inradius} by choosing $g=\phi_1$. In this case $u(x,t)=\phi_1(x)\,e^{-\la_1(\Om) t}$, and hence we can compute that $K\le\la_1(B)$  (see Remark \ref{rem:comparison-temperature-eigenfunction}), so that we obtain the bound:
$$
\frac{d_\Ga(z(t))}{r_\Om}\ge \frac1{\sqrt{\la_1(B)}}.
$$
This is slightly worse than \eqref{dist-eigenvalue-inradius}, but substantially consistent with it.
\par
The proofs of Theorems \ref{th:maximum-point-torsion}, \ref{th:maximum-point-eigenfunction}, and \ref{th:hotspot} are not so difficult. They are all based on now classical bounds for the gradient of the relevant solutions (\cite{PP1, PP2, PPV, CGS}). To make our proofs self-contained, we shall recall and adapt the main arguments used in those references.
\par
To affirm the flexibility of this method, we also show that it provides basic estimates of the location of maximum points of solutions of a variety of equations, that can be quasilinear, isotropic and anisotropic, and semilinear. As an instance of this kind of results, here we consider a generalization of the torsional rigidity function to the case of isotropic quasilinear equations. Here below, $\Phi$ is a \textit{Young's function}, satisfying sufficient smoothness and growth assumptions, and $\Psi$ is its \textit{Young's conjugate} (see Section \ref{sec:nonlinear-torsions} for details). 

\begin{thm}
\label{th:maximum-point-torsion-quasilinear}
Let $\Om$ be a bounded domain with mean convex boundary $\Ga$. Let $z\in\Om$ be any maximum point in $\Om$ of the (weak) solution of 
\begin{equation}
\label{quasilinear-torsion}
-\dv\left\{ \Phi'(|\na u|)\,\frac{\na u}{|\na u|}\right\}=N \ \mbox{ in } \ \Om, \quad u=0 \ \mbox{ on } \ \Ga.
\end{equation}
Then we have that
\begin{equation}
\label{dist-torsion-quasilinear}
d_\Ga(z)\ge \frac1{N}\,\Psi^{-1}(N\,\Psi(r_\Om)).
\end{equation}
\end{thm}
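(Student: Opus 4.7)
The plan is to mimic the scheme of Theorem~\ref{th:maximum-point-torsion}: combine a pointwise gradient estimate for $u$ with a comparison-based lower bound on $M := \max_{\ol\Om} u$, and then read off \eqref{dist-torsion-quasilinear} via a suitable ``distance function'' transformation of $u$.

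For the gradient estimate, I would invoke the quasilinear Payne--Philippin $P$-function technique, as worked out in \cite{PP1, PPV, CGS}. The natural $P$-function attached to \eqref{quasilinear-torsion} is
\[
P(x) = \Psi\bigl(\Phi'(|\na u(x)|)\bigr) + N\,u(x).
\]
Under the mean convexity of $\Ga$, one shows that $P$ attains its maximum on $\ol\Om$ at a critical point of $u$. Evaluated at $z$ (where $\na u(z)=0$ and $u(z) = M$), this yields
\[
\Psi\bigl(\Phi'(|\na u(x)|)\bigr) \le N\,\bigl(M - u(x)\bigr) \quad \text{for every } x\in\Om,
\]
which, by the Legendre duality $(\Phi')^{-1} = \Psi'$, is equivalent to
\[
|\na u(x)| \le \Psi'\bigl(\Psi^{-1}(N(M - u(x)))\bigr). \qquad (\star)
\]

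For the lower bound on $M$, I would compare $u$ with the radial solution $u_B$ of \eqref{quasilinear-torsion} on a largest inscribed ball $B\subset\Om$ of radius $r_\Om$, centered at some $x_0$: the weak comparison principle gives $u\ge u_B$ on $B$, and direct integration of the radial ODE (again using $(\Phi')^{-1} = \Psi'$) yields $u_B(x_0) = \int_0^{r_\Om}\Psi'(\rho)\,d\rho = \Psi(r_\Om)$, hence $M \ge \Psi(r_\Om)$. To conclude, define $F:[0,M]\to[0,\infty)$ by
\[
F(s) = \frac{1}{N}\bigl[\Psi^{-1}(NM) - \Psi^{-1}(N(M-s))\bigr],
\]
so that $F(0)=0$, $F(M) = \Psi^{-1}(NM)/N$, and $F'(s) = 1/\Psi'\bigl(\Psi^{-1}(N(M-s))\bigr)$. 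The bound $(\star)$ then gives $|\na(F\circ u)| = F'(u)\,|\na u| \le 1$ in $\Om$, so that $F\circ u$ is $1$-Lipschitz. Applied to $z$ and any closest boundary point $w\in\Ga$,
\[
\frac{\Psi^{-1}(NM)}{N} = F(M) - F(0) = (F\circ u)(z) - (F\circ u)(w) \le |z-w| = d_\Ga(z),
\]
and the monotonicity of $\Psi^{-1}$ combined with $M\ge\Psi(r_\Om)$ gives \eqref{dist-torsion-quasilinear}.

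The main obstacle is making $(\star)$ fully rigorous: weak solutions of \eqref{quasilinear-torsion} are typically only $C^{1,\alpha}_{\loc}$ when $\Phi$ is degenerate, so the classical differential inequality for $P$ cannot be applied directly. One would instead run the argument on a non-degenerate regularization of $\Phi$, derive a uniform version of $(\star)$, and pass to the limit along the lines of \cite{CGS}. Once $(\star)$ is available, the remaining steps essentially reduce to the ODE/Lipschitz manipulation above, which is a structural generalization of the $\sqrt{\,\cdot\,}$ argument implicit in the proof of Theorem~\ref{th:maximum-point-torsion}.
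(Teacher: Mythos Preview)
Your proposal is correct and follows essentially the same approach as the paper: the paper states the gradient bound $(\star)$ as Lemma~\ref{lem:bound-gradient-quasilinear-torsion}, the lower bound $M\ge\Psi(r_\Om)$ as Lemma~\ref{lem:relationdist-quasilinear}, and then integrates along the segment from $z$ to its nearest boundary point using the auxiliary function $\chi(\sigma)=\int_0^\sigma ds/\psi(\Psi^{-1}(Ns))=\tfrac1N\Psi^{-1}(N\sigma)$, which is exactly your $F$ up to the identity $F(u(x))=\chi(u(z))-\chi(u(z)-u(x))$. Your Lipschitz framing of the last step is a clean repackaging of the paper's path-integral computation, and your remarks on the regularity issue match how the paper handles it (via \cite{To,Li} and an approximation argument).
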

The relevant assumptions on $\Phi$ cover the case of the $p$-Laplace operator, for which we set $\Phi(\si)=\si^p/p$ for $p>1$. Inequality \eqref{dist-torsion-quasilinear} thus reads as
$$
\frac{d_\Ga(z)}{r_\Om}\ge \frac{1}{N^{1/p}},
$$
in accordance with \eqref{dist-torsion} and still independent of geometrical quantities.
In Corollary \ref{cor:bound-elasto-plastic}, we also show that, for a quite general choice of $\Phi$, the right-hand side of the last inequality should be replaced by a quantity also depending on the growth parameters of $\Phi$.
\par
Theorem \ref{th:maximum-point-torsion-quasilinear} can be further generalized to the anisotropic case in which the Euclidean norm of the gradient in \eqref{quasilinear-torsion} is replaced by any norm $H$ on $\RR^N$, satisfying suitable sufficient assumptions. In fact, in Section \ref{sec:Wulff} we shall prove the following result for the case of $H$-mean convex boundaries --- the appropriate analog of mean convex boundaries in this setting.

\begin{thm}
\label{th:dist-estimate-anisotropic}
Let $\Om$ be a bounded domain with $H$-mean convex boundary $\Ga$. Let $z\in\Om$ be any maximum point in $\Om$ of the (weak) solution of
\begin{equation*}
%
-\dv \{\na\Phi_H(\na u)\}=N \ \mbox{ in } \ \Om, \quad u=0 \ \mbox{ on } \ \Ga,
\end{equation*}
where $\Phi_H=\Phi\circ H$.
Then, it holds that
\begin{equation*}
d_\Ga^o(z)\ge \frac1{N}\,\Psi^{-1}(N\,\Psi(r^o_\Om)).
\end{equation*}
\end{thm}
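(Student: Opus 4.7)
The plan is to mirror the proof of Theorem \ref{th:maximum-point-torsion-quasilinear}, replacing throughout the Euclidean norm $|\cdot|$ by $H$, the distance $d_\Ga$ by its Finsler counterpart $d_\Ga^o$ (associated with the dual norm $H^o$), and the largest inscribed Euclidean ball (of radius $r_\Om$) by the largest inscribed Wulff shape (of $H^o$-radius $r_\Om^o$). As in the isotropic case, two ingredients are needed: an anisotropic pointwise upper bound on $H(\na u)$ in $\ol\Om$, and a lower bound for $u$ at a maximum point $z$.

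For the first ingredient, I would invoke the anisotropic Payne--Philippin type $P$-function developed in \cite{CGS}. Under the $H$-mean convexity of $\Ga$, that $P$-function attains its maximum on $\Ga$, which translates into a pointwise inequality of the form
\begin{equation*}
\Phi\bigl(H(\na u(x))\bigr) + N\,u(x) \le N\,u(z) \quad \text{for a.e. } x\in\Om,
\end{equation*}
and, by Young conjugation, into an upper estimate for $H(\na u)$ expressed via $\Psi$. The $H$-mean convexity is exactly the geometric assumption needed so that a Hopf-type boundary argument locates the maximum of the $P$-function on $\Ga$.

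For the second ingredient, I would apply the weak comparison principle between $u$ and the explicit radial (in $H^o$-distance) solution $v$ of the same quasilinear anisotropic torsion problem on the largest inscribed Wulff shape of $H^o$-radius $r_\Om^o$. Since $u\ge v$ inside this Wulff shape by comparison, and $v$ attains at its $H^o$-center the explicit value $\Psi(r_\Om^o)$ (obtained by integrating the one-dimensional ODE satisfied by its radial profile), one gets $u(z)\ge\Psi(r_\Om^o)$.

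The two bounds combine---by integrating the gradient estimate along an $H^o$-minimizing curve from the nearest boundary point to $z$, whose $H^o$-length equals $d_\Ga^o(z)$---to yield
\begin{equation*}
d_\Ga^o(z)\ge \frac1N\,\Psi^{-1}\!\bigl(N\,\Psi(r_\Om^o)\bigr).
\end{equation*}
The main obstacle is carrying out the $P$-function computation in the Finsler/Wulff setting: the classical Bernstein-type argument of Payne--Philippin, resting on strict convexity and linearization of the Lagrangian, has to be anisotropized with care, since both the $H$-mean curvature of $\Ga$ and the Euler operator associated with $\Phi_H=\Phi\circ H$ enter non-trivially. The needed technical machinery---including an anisotropic Hopf lemma and the right form of the $P$-function---is already available in \cite{CGS}.
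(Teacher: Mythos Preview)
Your overall strategy matches the paper's proof exactly: an anisotropic $P$-function gradient bound (Lemma~\ref{lem:anisotropic-bound-gradient-quasilinear}), a comparison lower bound against the explicit Wulff-shape solution (Lemma~\ref{lem:anisotropic-relationdist-quasilinear}), and integration along the straight segment from $z$ to its nearest boundary point using the anisotropic Cauchy--Schwarz inequality $|\langle\xi,\eta\rangle|\le H(\xi)\,H^o(\eta)$.

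Two points in your description are off, though. First, the $P$-function maximum is \emph{not} attained on $\Gamma$: the $H$-mean convexity of $\Gamma$ is precisely what makes $P_{\nu_a}\ge 0$ on $\Gamma$, so Hopf's lemma \emph{rules out} the boundary and the maximum falls at a critical point of $u$, where $P\le 0$. You have this mechanism reversed. Second, the correct $P$-function is
\[
P=\Psi\bigl(\phi(H(\nabla u))\bigr)+N\bigl[u-\max_{\ol\Om} u\bigr],
\]
not $\Phi(H(\nabla u))+N\,u$; the two coincide only when $\Phi(\sigma)=\sigma^2/2$. With the correct $P$ the gradient bound reads $H(\nabla u)\le\psi\bigl(\Psi^{-1}(N[u(z)-u])\bigr)$, and after the change of variables $N s=\Psi(t)$ the integral $\int_0^{u(z)}ds/\psi(\Psi^{-1}(Ns))$ collapses to $N^{-1}\Psi^{-1}(N\,u(z))$, which is exactly what produces the stated bound. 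Finally, the anisotropic $P$-function interior estimate is taken from \cite{CFV}, not \cite{CGS}; the latter covers only the isotropic case.
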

\noindent
Here, $d_\Ga^o$ and $r^o_\Om$ are the appropriate analogs of $d_\Ga$ and $r_\Om$ in the norm $H$ (see Section \ref{sec:Wulff} for details).

We shall begin our account by presenting in  Section \ref{sec:torsion} what we think is the easiest setting: that of the torsional rigidity density of a straight bar or the flow velocity of a fluid in a straight pipe (Theorem \ref{th:maximum-point-torsion}). The simple setting will allow us to dwell on some further details and extensions to more general domains. In  this same section,  we will also present a similar estimate for positive solutions of semilinear equatons (see Theorem \ref{th:maximum-point-small-diffusion}). Section \ref{sec:torsion} ends with the description of the relationship of the maximum points of the torsional rigidity function and those of a related problem in dependence of a diffusion parameter. 
\par
In Section \ref{sec:eigenfunction-heat}, we consider the first Dirichlet eigenfunction for $-\De$ and the case of the heat equation. We prove and compare Theorems \ref{th:maximum-point-eigenfunction} and \ref{th:hotspot}. In these frameworks, the pointwise estimate from below for the relevant solution is not needed. 
\par
Section \ref{sec:nonlinear-torsions} contains a basic introduction to Young's functions, the proof of Theorem
\ref{th:maximum-point-torsion-quasilinear}, and an extension of that theorem to the case of semilinear source terms. 
\par
We conclude our paper with Section \ref{sec:Wulff}, in which we consider quite general anisotropic operators and prove Theorem \ref{th:dist-estimate-anisotropic}.
\par
To avoid unnecessary technicalities, differently from what done in Section \ref{sec:torsion}, in the remaining sections we decided to limit our description to the elegant case of a domain with mean convex boundary $\Ga$ of class $C^{2,\ga}$ for some $\ga\in (0,1]$. The restriction on the regularity of $\Ga$ can be removed by an appropriate approximation argument. We shall present this argument for the case discussed in Section \ref{sec:torsion} (see Lemma \ref{lem:bound-gradient}) and omit it for those considered in Sections \ref{sec:eigenfunction-heat}--\ref{sec:Wulff}, since it is not our purpose to discuss here the optimal regularity assumptions.

\bigskip

 \section{Maximum points of the torsional rigidity function}
\label{sec:torsion}
In this section, we shall present our results on the location of maximum points of the classical torsional rigidity function $u$ defined by \eqref{torsion}. We will consider domains with various geometries. 
As a reference case, we choose that in which $\Om$ is a bounded domain with \textit{mean convex} boundary $\Ga$. Thus, we assume that $\Ga$ is of class $C^2$ and its mean curvature $\cM$ with respect to the interior normal is non-negative. With this choice, convex domains have non-negative principal curvatures, and hence mean convex boundary.

\subsection{Bounds for $\mathbf u$ and its gradient}
\label{subsec:bounds-from-below}
The first step of our argument is a pointwise bound from below  for $u$ in terms of the distance $d_\Ga(x)$ of a point $x\in\Om$ to $\Ga$. This is the content of \cite[Lemma 3.1]{MP2} that, for the reader's convenience, we recall here below.

\begin{lem}
\label{lem:relationdist}
Let $\Om\subset\RR^N$, $N\ge 2$, be a bounded domain. Let $u\in C^0(\ol{\Om})\cap C^2(\Om)$ satisfy the problem \eqref{torsion}.
Then
\begin{equation}
\label{relationdist}
u(x)\ge\frac12\, d_\Ga(x)^2 \ \mbox{ for every } \ x\in\ol{\Om}.
\end{equation}
\end{lem}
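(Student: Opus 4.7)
The plan is to exploit the maximum principle by comparison against the explicit torsion function of a ball. Fix an arbitrary point $x_0 \in \Omega$ and set $r = d_\Gamma(x_0)$. By definition of the distance function, the open ball $B = B_r(x_0)$ is contained in $\Omega$, and $u \geq 0$ on $\overline{B}$ (in particular on $\partial B$) by the strong maximum principle applied to the original problem \eqref{torsion}.

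Next I would introduce the auxiliary function
\begin{equation*}
v(x) = \tfrac{1}{2}\bigl(r^2 - |x-x_0|^2\bigr), \quad x \in \overline{B}.
\end{equation*}
A direct computation gives $-\Delta v = N$ in $B$, and $v = 0$ on $\partial B$; thus $v$ is the torsion function of the ball $B$. The difference $w = u - v$ then satisfies $-\Delta w = 0$ in $B$ and $w = u \geq 0$ on $\partial B$. By the weak maximum principle for harmonic functions, $w \geq 0$ throughout $\overline{B}$, i.e. $u(x) \geq v(x)$ for every $x \in \overline{B}$.

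Evaluating this pointwise inequality at the center $x = x_0$ yields
\begin{equation*}
u(x_0) \geq v(x_0) = \tfrac{1}{2} r^2 = \tfrac{1}{2} d_\Gamma(x_0)^2.
\end{equation*}
Since $x_0 \in \Omega$ was arbitrary and the inequality extends to $\overline{\Omega}$ by continuity of both sides (with $d_\Gamma = 0$ on $\Gamma$), the bound \eqref{relationdist} follows.

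There is essentially no obstacle here: the only subtle point is having $u \geq 0$ on $\partial B$, which is ensured by the strong maximum principle applied to \eqref{torsion} (positivity of $u$ in $\Omega$ and continuity up to the boundary). The argument is robust and does not require any smoothness of $\Gamma$, which matches the minimal regularity hypothesis of the lemma.
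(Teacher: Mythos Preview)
Your proof is correct and follows essentially the same approach as the paper: compare $u$ with the explicit torsion function of the inscribed ball $B_{d_\Gamma(x_0)}(x_0)$ via the maximum principle, then evaluate at the center. You have simply spelled out the comparison step (positivity of $u$ on $\partial B$ and the harmonic-difference argument) in slightly more detail than the paper does.
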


\begin{proof}
For a fixed $x\in\Om$, let $r=d_\Ga(x)$ and consider the ball $B=B_r(x)$. 
Let $w^r$ be the solution of \eqref{torsion} in $B$, that is $w^r(y)=(r^2-|y-x|^2)/2$. By comparison we have that $u\ge w^r$ on $\ol{B}$ and hence, in particular, at the center of $B$, that is $u(x) \ge w(x)=r^2/2=d_\Ga(x)^2/2$. 
\end{proof}

\par
Next, we recall an inequality for $|\na u|$ that can be found in \cite{PP1} for dimension $N=2$ (for a proof in a more general setting, we refer to \cite{CGS}). For our aims, in the following lemma we collect, adapt to the case of general dimension,  and re-organize some results contained in \cite{PP1}.

\begin{lem}
\label{lem:bound-gradient}
Let $\Om\subset\RR^N$, $N\ge 2$, be a bounded domain with boundary $\Ga$ of class $C^2$.
Let $u\in C^1(\ol{\Om})\cap C^2(\Om)$ satisfy problem \eqref{torsion}. Set
$$
G=\max_\Ga |\na u| \ \mbox{ and } \ \cM_0^-=\max_\Ga \cM^-,
$$
where $\cM^-=\max(-\cM,0)$.
Then the function defined by
$$
P=\frac12\,|\na u|^2+[N+(N-1)\,\cM_0^-\,G]\,\Bigl[u-\max_{\ol{\Om}} u\Bigr] \ \mbox{ on } \ \ol{\Om}
$$ 
attains its maximum at some critical point of $u$, and hence it holds that
\begin{equation}
\label{gradient-bound-general}
|\na u|^2\le 2\,[N+(N-1)\,\cM_0^-\,G]\,\Bigl[\max_{\ol{\Om}} u-u\Bigr] \ \mbox{ on } \ \ol{\Om}.
\end{equation}
In particular, if $\Ga$ is mean convex, we have that
$$
|\na u|^2\le 2\,N\,\Bigl[\max_{\ol{\Om}} u-u\Bigr] \ \mbox{ on } \ \ol{\Om}.
$$
\end{lem}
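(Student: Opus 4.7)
The plan is to apply a Payne--Philippin type P-function argument to $P$ directly. The coefficient $b:=N+(N-1)\,\cM_0^-\,G$ is tuned precisely so that Hopf's lemma on $\Ga$ leads to a contradiction if a boundary maximum of $P$ is hypothesized.

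First I would verify an elliptic differential inequality of the form
$$
\De P-\be\,\frac{\na u\cdot\na P}{|\na u|^2}\ge 0
$$
on the open set $\{|\na u|>0\}\subset\Om$, for a suitable real constant $\be$. To do so I would combine Bochner's identity $\De(\tfrac12|\na u|^2)=|\na^2 u|^2+\na u\cdot\na(\De u)=|\na^2 u|^2$ (since $\De u=-N$) with $b\,\De u=-bN$, and bound $|\na^2 u|^2$ from below via a Cauchy--Schwarz decomposition with respect to the direction $e=\na u/|\na u|$: writing $\al=e^T(\na^2 u)\,e$, one has
$$
|\na^2 u|^2\ \ge\ \al^2+\frac{(\De u-\al)^2}{N-1}.
$$
Choosing $\be$ appropriately, the resulting inequality takes the form $LP\ge 0$ for an elliptic operator $L$ whose coefficients degenerate only on the critical set of $u$. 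The strong maximum principle then forces the maximum of $P$ in $\ol\Om$ to be attained either at a critical point of $u$ or on $\Ga$.

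To exclude the boundary case, I would compute the inward normal derivative of $P$ on $\Ga$. Using $u\equiv 0$ on $\Ga$, so that $\na u=u_n\,n$ and $|\na u|=u_n$ (with $n$ the inward unit normal), together with the tangential Hessian identity $\sum_i u_{\tau_i\tau_i}=-(N-1)\,\cM\,u_n$ (which follows from $u\equiv 0$ on $\Ga$), the equation $-N=\De u=u_{nn}+\sum_i u_{\tau_i\tau_i}$ gives $u_{nn}=-N+(N-1)\,\cM\,u_n$ on $\Ga$. A direct computation then yields
$$
\frac{\pa P}{\pa n}=u_n\bigl[\,b-N+(N-1)\,\cM\,u_n\bigr]\quad\text{on }\Ga.
$$
Since $\cM\,u_n\ge -\cM_0^-\,G$, the specific choice of $b$ makes this expression non-negative everywhere on $\Ga$, in contradiction with Hopf's lemma (applicable because $\na u\ne 0$ on $\Ga$). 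Therefore the maximum of $P$ must sit at a critical point of $u$, where $|\na u|=0$ and $u-\max_{\ol\Om}u\le 0$; this gives $P\le 0$ in $\ol\Om$, which is \eqref{gradient-bound-general}. The mean convex case is the specialization $\cM_0^-=0$, $b=N$.

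To handle the bare $C^2$ regularity of $\Ga$ (insufficient for the Hessian computations on $\Ga$ performed above), I would add an approximation step: exhaust $\Om$ from inside by a family $\{\Om_\ve\}$ of domains with $C^{2,\ga}$ boundary whose mean curvatures converge to those of $\Ga$, apply the inequality on each $\Om_\ve$, and pass to the limit via Schauder estimates and stability of the Dirichlet problem under Hausdorff convergence. The principal obstacle is the first step: identifying the correct drift parameter $\be$ and executing the Cauchy--Schwarz split of $|\na^2 u|^2$ sharply enough that, after subtracting $bN$, the combined expression has a definite sign. The size of $b$ is then forced in turn by the boundary computation of the second step, which is what links the choice of the linear coefficient in $P$ to $\cM_0^-$ and $G$.
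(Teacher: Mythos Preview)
Your proposal is correct and follows the same $P$-function strategy as the paper: an interior elliptic inequality for $P$ away from critical points, the boundary computation via $u_{\nu\nu}=-N+(N-1)\,\cM\,u_\nu$ showing $P_\nu\ge 0$ on $\Ga$ precisely for $b=N+(N-1)\,\cM_0^- G$, and an approximation step to reduce to smoother boundaries.

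The one genuine technical difference is the form of the interior inequality. The paper does not use a purely linear drift; instead it establishes the \emph{identity}
\[
|\na u|^2\,\De P-|\na P|^2+2b\,\na u\cdot\na P=
|\na u|^2\,|\na^2 u|^2-|\na^2 u\,\na u|^2+b(b-N)\,|\na u|^2,
\]
whose right-hand side is nonnegative by the elementary Cauchy--Schwarz bound $|\na^2 u\,\na u|^2\le|\na u|^2|\na^2 u|^2$ together with $b\ge N$. The quadratic term $-|\na P|^2$ absorbs the cross terms, so no sharper Hessian estimate is needed and no auxiliary parameter has to be tuned. Your linear form $\De P-\be\,|\na u|^{-2}\,\na u\cdot\na P\ge 0$ also works, but only after the refined split $|\na^2 u|^2\ge\al^2+(N-1)^{-1}(\De u-\al)^2$ and after solving for an admissible $\be$ (e.g.\ $\be=-2N$ when $b=N$); the existence of such $\be$ is equivalent to $b\ge N$, so nothing is lost. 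The paper's route is algebraically cleaner; yours makes the role of the Newton-type inequality explicit. A minor point: the paper approximates $\Om$ from \emph{outside} by $C^{2,\gamma}$ domains rather than from inside, but either direction works here.
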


\begin{proof}
(i) We first assume that $\Ga$ is of class $C^{2,\ga}$ for some $\ga\in (0,1]$. Then, the standard regularity theory ensures that $u\in C^{2,\ga}(\ol{\Om})\cap C^\infty(\Om)$, and hence that $P\in C^{1,\ga}(\ol{\Om})\cap C^\infty(\Om)$.
\par
Let 
$$
P=\frac12\,|\na u|^2+\be\,[u-\max_{\ol{\Om}} u];
$$
it turns out that 
Next, by straightforward calculations, we obtain the identity:
\begin{multline}
\label{P-identity-1}
|\na u|^2 \De P-|\na P|^2+2\be\,\na u\cdot\na P= \\
|\na u|^2 |\na^2 u|^2-|\na^2 u\, \na u|^2+\be\,(\be-N)\,|\na u|^2 \ \mbox{ in } \ \Om.
\end{multline}
Also,
$$
P_\nu=|\na u|\,\left\{\frac{\lan \na^2 u\,\na u, \na u\ran}{|\na u|^2}+\be\right\}=|\na u|\,\{\be-N+(N-1)\,\cM\,|\na u|\} \ \ \mbox{ on } \ \Ga, 
$$
where we have used the identity
\begin{equation}
\label{reilly-identity}
\De u=u_{\nu\nu}-(N-1)\,\cM\,|\na u| \ \mbox{ on } \ \Ga,
\end{equation}
and the fact that the inward unit normal $\nu$ equals $\na u/|\na u|$ at points on $\Ga$, since $\Ga$ is the boundary of the set where $u$ is positive. Hence, we obtain the inequality
\begin{equation}
\label{P-identity-2}
P_\nu\ge |\na u|\,\{\be-N-(N-1)\,\cM_0^-\,G\} \ \mbox{ on } \ \Ga.
\end{equation}
Thus, if we choose
$$
\be=N+(N-1)\,\cM_0^-\,G\ge N,
$$
then \eqref{P-identity-1} and \eqref{P-identity-2}, and the fact that $|\na^2 u\, \na u|^2\le |\na u|^2 |\na^2 u|^2$ (by Cauchy-Schwarz inequality)
give that
$$
|\na u|^2 \De P-|\na P|^2+2\be\,\na u\cdot\na P\ge 0 \ \mbox{ in } \ \Om \quad\mbox{and}\quad
P_\nu\ge 0 \ \mbox{ on } \ \Ga.
$$
By the strong maximum principle and Hopf boundary lemma, the last two inequalities give that the maximum of $P$ must be attained at a critical point of $u$. Since $P\le 0$ at the critical points of $u$, we conclude that $P\le 0$ on $\ol{\Om}$, and our claim is proved.
\par
(ii) If $\Ga$ is of class $C^2$, we can approximate $\Om$ by a decreasing sequence of domains $\Om_n\supset\ol{\Om}$, with boundaries $\Ga_n$ of class $C^{2,\ga}$ such that the corresponding mean curvatures $\cM_n$ converge to the mean curvature $\cM$ of $\Ga$, uniformly as $n\to\infty$. The corresponding solution  $u_n$ of \eqref{torsion} in $\Om_n$ satisfies \eqref{gradient-bound-general} for every $n\in\NN$, thanks to (i). Since $u_n$ and $\na u_n$ converge uniformly  to $u$ and $\na u$ on $\ol{\Om}$, we conclude that \eqref{gradient-bound-general} holds true for $u$ on $\ol{\Om}$.
\end{proof}

\subsection{The location of maximal torsional points} 

We now proceed to the proof of our first main result. To this aim we set
\begin{equation}
r_\Om=\max_{x\in\ol{\Om}} d_\Ga(x),
\end{equation}
the \textit{inradius} of $\Om$. A point $x_\Om$ attaining the value $r_\Om$ is often called  an \textit{incenter}. A strictly convex domain admits a unique incenter. If the domain is not strictly convex, then it may admit more than one incenter and even a continuum of incenters. For instance, a dumbbell admits two incenters. a rectangle admits a segment of incenters. A (circular) torus admits a circle of incenters (notice that one can construct tori with mean convex boundaries).

\begin{proof}[\bf Proof of Theorem \ref{th:maximum-point-torsion}]
We can apply Lemma \ref{lem:bound-gradient} and obtain that
$$
\frac{|\na u|}{2\,\sqrt{u(z)-u}}\le\sqrt{\frac{N}{2}} \ \mbox{ on } \ \ol{\Om},
$$
since we are assuming that $\cM\ge 0$ on $\Ga$.
We take $x\in\Om$ and proceed as in \cite{PP1}, that is we let $y\in\Ga$ be such that $|x-y|=d_\Ga(x)$  
and, being $u(y)=0$, compute:
\begin{multline*}
\sqrt{u(z)}-\sqrt{u(z)-u(x)}=\int_0^1 \frac{d}{dt} \sqrt{u(z)-u(x+t\,(y-x))}\,dt= \\
\int_0^1 \frac{\na u(x+t\,(y-x))\cdot (x-y)}{2\,\sqrt{u(z)-u(x+t\,(y-x))}}\,dt \le
\sqrt{\frac{N}{2}}\,|x-y|=\sqrt{\frac{N}{2}}\,d_\Ga(x).
\end{multline*}
Thus,  by choosing $x=z$, we have that
\begin{equation}
\label{bound-u(z)}
\sqrt{u(z)}\le\sqrt{\frac{N}{2}}\,d_\Ga(z).
\end{equation}
\par
Finally, we pick an incenter $x_\Om$ of $\Om$ and by Lemma \ref{lem:relationdist} obtain:
$$
\frac1{\sqrt{2}}\,r_\Om=\frac1{\sqrt{2}}\,d_\Ga(x_\Om)\le \sqrt{u(x_\Om)}\le \sqrt{u(z)}\le \sqrt{\frac{N}{2}}\,d_\Ga(z).
$$
Our claim then follows at once.
\end{proof}

\begin{rem}
Notice that \eqref{bound-u(z)} also gives the estimate:
$$
u(z)\le \frac{N}{2}\,r_\Om^2.
$$
This can be found in \cite{PP1}.
\end{rem}

\begin{ex}
The assumption of mean convexity allows domains made of balls connected by \textit{goose-necks} or with \textit{tails} attached.
Theorem \ref{th:maximum-point-torsion} tells us that goose-necks and tails cannot contain a maximum point of $u$, if they are too thin.  
\par
For instance, one can construct a dumbbell-shaped domain in $\RR^3$ with a boundary made by portions of two spheres joined by a portion of a catenoid. The mean curvature of the spheres is constant and positive and that of the catenoid is zero. It is not difficult to smooth out the boundary to obtain a mean convex surface $\Ga$ of class $C^2$. If $\Om$ is the bounded domain having $\Ga$ as a boundary, then Theorem \ref{th:maximum-point-torsion} ensures that
$$
\frac{d_\Ga(z)}{r_\Om}\ge\frac1{\sqrt{3}}=0.57735\cdots.
$$
Thus, if one of the two balls has radius which is smaller than $57\%$ of the other, we have that the maximumn point must fall within (the portion of) the larger sphere, somewhere near its center. 
Notice that a second (local) maximum point (within the smaller ball) and a saddle point (within the catenoid) may be present in $\Om$. 
\end{ex}

\begin{figure}[htbp]
\centering
\includegraphics[scale=.33]{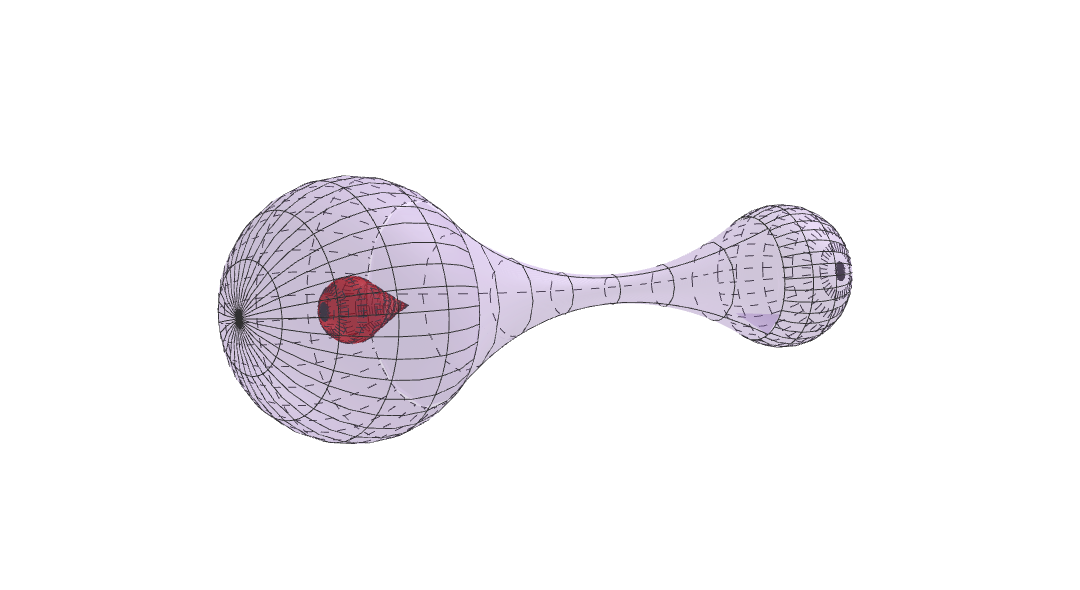}
\caption{The right spherical end of the dumbbell is too small compared to the left one. The maximum point of the torsional rigidity density $u$ must thus fall into the dark domain parallel to the boundary.}
\label{fig:dumb-bell}
\end{figure}

If $\Om$ is convex, it is well-known that $u$ has only one maximum point (see \cite{BL}, \cite{Ko}, \cite{Ma}). 
In our second result, we thus consider this case and obtain an improvement of \eqref{dist-torsion}, based on the \textit{John's ellipsoid} $E_a(c)$ related to $\Om$. This is the ellipsoid of maximum volume contained in $\Om$ (see \cite{Jo}, \cite{Ga}). It is known that, if $\Om$ is convex, $E_a(c)$ is uniquely determined. Here, $c$ denotes the center of $E_a(c)$, the (positive) components of the vector $a=(a_1, \dots, a_N)$ are the semi-axes of $E_a(c)$, and we agree that $a_1\le\cdots\le a_N$.
\begin{thm}
\label{th:ellipse-bound}
Let $\Om$ be a convex domain in $\RR^N$ and $E_a(c)$ be its John's ellipsoid. Let $z\in\Om$ be the maximum point in $\Om$ of the solution $u$ of \eqref{torsion}. Then we have that
\begin{equation*}
\label{dist-torsion-ellipse}
\frac{d_\Ga(z)}{r_\Om}\ge \frac1{\sqrt{N}}\,\max\left[1, \frac{m_{-2}(a)}{r_\Om}\right],
\end{equation*}
where
$$
m_{-2}(a)=\left(\frac1{N}\,\sum\limits_{i=1}^N a_i^{-2}\right)^{-1/2}
$$
is the $(-2)$-mean of the numbers $a_1, \dots, a_N$.
\end{thm}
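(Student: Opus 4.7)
The plan is to sharpen the proof of Theorem \ref{th:maximum-point-torsion} by replacing the ball comparison of Lemma \ref{lem:relationdist} with a comparison against the explicit torsion function on John's ellipsoid $E_a(c)$. The gradient-based estimate \eqref{bound-u(z)}, namely $\sqrt{u(z)}\le \sqrt{N/2}\,d_\Ga(z)$, was obtained under the mean convex assumption (which is automatic in our present convex setting) and does not use any lower bound on $u$; hence the only ingredient to improve is the lower bound on $u(z)$.

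First, I would fix coordinates so that $E_a(c)$ is the standard axis-aligned ellipsoid centred at $c$ with semi-axes $a_1,\dots,a_N$. A one-line calculation identifies the explicit solution of the Poisson problem $-\De w=N$ in $E_a(c)$, $w=0$ on $\pa E_a(c)$:
$$
w(x)=\frac{N}{2\sum_{i=1}^N a_i^{-2}}\Bigl(1-\sum_{i=1}^N \frac{(x_i-c_i)^2}{a_i^2}\Bigr).
$$
Because $E_a(c)\subset\Om$ and $u\ge 0=w$ on $\pa E_a(c)$, the function $u-w$ is harmonic in $E_a(c)$ with non-negative boundary values. The weak maximum principle therefore gives $u\ge w$ throughout $E_a(c)$, and since $z$ is a global maximum of $u$, evaluating at the centre yields
$$
u(z)\ge u(c)\ge w(c)=\frac{N}{2\sum_{i=1}^N a_i^{-2}}=\tfrac12\,m_{-2}(a)^2.
$$

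Substituting this lower bound into \eqref{bound-u(z)} produces
$$
\tfrac{1}{\sqrt 2}\,m_{-2}(a)\le\sqrt{u(z)}\le\sqrt{\tfrac{N}{2}}\,d_\Ga(z),
$$
so that $d_\Ga(z)\ge m_{-2}(a)/\sqrt N$. Dividing by $r_\Om$ and combining with the bound $d_\Ga(z)/r_\Om\ge 1/\sqrt N$ from Theorem \ref{th:maximum-point-torsion} yields the claim. I do not foresee a real obstacle here: both the construction of $w$ and the maximum-principle comparison are classical, and convexity of $\Om$ is used only to ensure that John's ellipsoid is well-defined and contained in $\Om$. The heart of the argument is simply the observation that the explicit formula for the torsion function of an ellipsoid produces, at its centre, the value $\tfrac12\,m_{-2}(a)^2$, which is precisely the quantity that combines with \eqref{bound-u(z)} to give the $(-2)$-mean in the statement.
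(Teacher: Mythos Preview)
Your proposal is correct and follows essentially the same route as the paper: compute the explicit torsion function on the John ellipsoid, compare with $u$ via the maximum principle to get $u(z)\ge w(c)=\tfrac12\,m_{-2}(a)^2$, feed this into \eqref{bound-u(z)}, and then take the maximum with the bound from Theorem \ref{th:maximum-point-torsion}. The only cosmetic difference is that you spell out the harmonicity of $u-w$ explicitly, whereas the paper simply invokes the comparison argument of Lemma \ref{lem:relationdist}.
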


\begin{proof}
The solution $w$ of \eqref{torsion} in $E_a(c)$ is easily computed as
$$
\displaystyle w(y)=\frac{N}{2}\,\frac{1-\sum\limits_{i=1}^N\left(\frac{y_i-c_i}{a_i}\right)^2}{\sum\limits_{i=1}^N a_i^{-2}} \ \mbox{ for } \ y\in \ol{E_a(c)}.
$$
By proceeding as in the proof of Lemma \ref{lem:relationdist}, we then infer that 
$$
u(z)\ge u(c)\ge w(c)=\frac{N}{2}\,\left\{\sum\limits_{i=1}^N a_i^{-2}\right\}^{-1}.
$$
Since we already know from the proof of Theorem \ref{th:maximum-point-torsion} that
$d_\Ga(z)\ge \sqrt{2\,u(z)/N}$, we then obtain that $d_\Ga(z)\ge m_{-2}(a)/\sqrt{N}$. Our claim then follows by observing that \eqref{dist-torsion} always holds. 
\end{proof}

\subsection{The case of general domains}

If we use Lemma \ref{lem:bound-gradient} in its full power, we can extend Theorem \ref{th:maximum-point-torsion} to the case of general smooth domains, that is by removing the mean convexity assumption.
In this case, the obtained bound obviously depends on the number $\cM^-_0$.

\begin{cor}[Bound for general domains]
\label{cor:domain-curvature}
Let $\Om$ be a bounded domain in $\RR^N$, $N\ge 2$, with boundary of class $C^2$. Assume that $(N-1) \cM_0^- r_\Om<1$.
\par
Let $z\in\Om$ be any maximum point in $\Om$ of the solution $u$ of \eqref{torsion}. Then we have that
\begin{equation*}
\label{dist-torsion-general}
\frac{d_\Ga(z)}{r_\Om}\ge \sqrt{\frac{1-(N-1) \cM_0^- r_\Om}{N}}.
\end{equation*}
\end{cor}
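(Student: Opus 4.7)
The strategy is to reprise the argument used for Theorem~\ref{th:maximum-point-torsion}, but now using Lemma~\ref{lem:bound-gradient} in full generality (no mean convexity), and then to eliminate the boundary gradient bound $G=\max_\Ga|\na u|$ that appears in the resulting estimate in favor of purely geometric data, namely $r_\Om$ and $\cM_0^-$.

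First, I would set $\be_G=N+(N-1)\,\cM_0^-\,G$ and apply Lemma~\ref{lem:bound-gradient} to obtain $|\na u|\le \sqrt{2\,\be_G\,[u(z)-u]}$ on $\ol\Om$. Repeating verbatim the integration-along-a-segment argument in the proof of Theorem~\ref{th:maximum-point-torsion}, with $x=z$ and $y\in\Ga$ the projection of $z$ onto $\Ga$, yields
\begin{equation*}
\sqrt{u(z)}\le\sqrt{\be_G/2}\;d_\Ga(z).
\end{equation*}
Combining this with the lower bound $u(z)\ge u(x_\Om)\ge r_\Om^2/2$ supplied by Lemma~\ref{lem:relationdist} at an incenter $x_\Om$, I get
\begin{equation*}
d_\Ga(z)\ge \frac{r_\Om}{\sqrt{\be_G}}=\frac{r_\Om}{\sqrt{N+(N-1)\,\cM_0^-\,G}}.
\end{equation*}

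The remaining and main issue is that $\be_G$ still contains the a priori unknown quantity $G$. To close the loop, I would estimate $G$ in terms of $r_\Om$ only. Evaluating the gradient bound of Lemma~\ref{lem:bound-gradient} at a point $y_0\in\Ga$ realizing $G$ (where $u(y_0)=0$) gives $G^2\le 2\,\be_G\,u(z)$, while the displayed inequality above together with $d_\Ga(z)\le r_\Om$ yields $u(z)\le (\be_G/2)\,r_\Om^2$. Multiplying these produces $G^2\le \be_G^{\,2}\,r_\Om^2$, and taking square roots gives the scalar inequality $G\le \be_G\,r_\Om=[N+(N-1)\cM_0^-\,G]\,r_\Om$. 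Solving for $G$ under the standing assumption $(N-1)\cM_0^-\,r_\Om<1$ yields
\begin{equation*}
G\le \frac{N\,r_\Om}{1-(N-1)\,\cM_0^-\,r_\Om}.
\end{equation*}

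Plugging this into $\be_G$ gives, after simplification, $\be_G\le N/[1-(N-1)\,\cM_0^-\,r_\Om]$, and substituting into the bound on $d_\Ga(z)/r_\Om$ produces exactly the stated inequality. The only genuinely delicate point is the last self-referential step, because of the circular appearance of $G$ on both sides of the gradient estimate; this is precisely where the hypothesis $(N-1)\cM_0^-\,r_\Om<1$ is needed, and it is the reason the bound degenerates as the negative mean curvature and the inradius simultaneously grow.
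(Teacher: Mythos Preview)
Your proof is correct and follows essentially the same route as the paper: derive $\sqrt{u(z)}\le\sqrt{\be_G/2}\,d_\Ga(z)$ from the general gradient bound, combine the resulting inequalities $G^2\le 2\be_G u(z)$ and $u(z)\le(\be_G/2)\,r_\Om^2$ to obtain $G\le \be_G r_\Om$, solve for $G$ using the hypothesis $(N-1)\cM_0^- r_\Om<1$, and substitute back via Lemma~\ref{lem:relationdist}. The paper's presentation differs only cosmetically (it does not introduce the shorthand $\be_G$).
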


\begin{proof}
By proceeding as in the proof of Theorem \ref{th:maximum-point-torsion}, this time we obtain the inequalities
$$
\frac{|\na u|}{\sqrt{u(z)-u}}\le 2\,\sqrt{\frac{N+(N-1)\,\cM_0^-\,G}{2}}
$$
and
\begin{equation}
\label{bound-u-distance}
\sqrt{u(z)}\le \sqrt{\frac{N+(N-1)\,\cM_0^-\,G}{2}}\,d_\Ga(z)\le r_\Om\, \sqrt{\frac{N+(N-1)\,\cM_0^-\,G}{2}}.
\end{equation}
Thus,
$$
|\na u|\le 2\,\sqrt{u(z)}\,\sqrt{\frac{N+(N-1)\,\cM_0^-\,G}{2}}\le r_\Om\,[N+(N-1)\,\cM_0^-\,G]
$$
Thanks to our assumption on $\cM_0^- r_\Om $, this information then gives the bound:
$$
G\le \frac{N\,r_\Om}{1-(N-1)\,\cM_0^- r_\Om}.
$$
Thus, the claim of the corollary follows from \eqref{relationdist} and  by inserting this bound into the first inequality in \eqref{bound-u-distance}.
\end{proof}

In alternative to the above bound on $G$, we may use the following one:
\begin{equation*}
\label{diam-ext-radius-gradient-estimate}
G\le 
c_N\, \diam(\Om)\left(1+\frac{\diam(\Om)}{r_e}\right),
\end{equation*}
where $\diam(\Om)$ is the diameter of $\Om$, $r_e$ is the radius of the largest ball contained in $\RR^N\setminus\ol{\Om}$ osculating $\Ga$, and 
$c_N=3/2$ for $N=2$ and $c_N=N/2$ for $N\ge 3$. This estimate was proved in \cite{MP1}
and works if $\Ga$ is of class $C^{1,\al}$ for $0<\al\le 1$ and satisfies the uniform exterior sphere condition with radius $r_e$.
\par
Since $\cM_0^-\le 1/r_e$, based on the last inequality for $G$ and the first inequality in \eqref{bound-u-distance}, we easily derive the following result that removes the restriction on $\cM_0^- r_\Om$.

\begin{cor}
\label{cor:domain-diameter-outradius}
Let $\Om\subset\RR^N$, $N\ge 2$, be a bounded domain with boundary $\Ga$ of class $C^{1,\al}$, $0<\al\le 1$, that satisfies the uniform exterior sphere condition with radius $r_e$.
\par
Let $z\in\Om$ be any maximum point in $\Om$ of the solution $u$ of \eqref{torsion}. Then we have that
\begin{equation*}
\label{dist-torsion-general-2}
\frac{d_\Ga(z)}{r_\Om}\ge \left[N+(N-1)\,c_N\, \frac{\diam(\Om)}{r_e}\left(1+\frac{\diam(\Om)}{r_e}\right)\right]^{-\frac12}.
\end{equation*}
\end{cor}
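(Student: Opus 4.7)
The plan is to combine three ingredients that are already at hand: the lower bound for $u$ at any incenter coming from Lemma \ref{lem:relationdist}, the inequality on $\sqrt{u(z)}$ given by the first estimate in \eqref{bound-u-distance}, and the external a priori gradient bound quoted from \cite{MP1}, namely
$$
G\le c_N\,\diam(\Om)\left(1+\frac{\diam(\Om)}{r_e}\right).
$$
The geometric observation that ties things together is the pointwise estimate $\cM_0^-\le 1/r_e$, which follows from the uniform exterior sphere condition since at every boundary point an osculating exterior ball of radius $r_e$ touches $\Ga$, bounding each principal curvature (with respect to the interior normal) from below by $-1/r_e$ and hence bounding $-\cM$ from above by $1/r_e$.

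First I would pick an incenter $x_\Om$ and apply Lemma \ref{lem:relationdist} to obtain $u(x_\Om)\ge r_\Om^2/2$; since $z$ is a global maximum point of $u$, this yields
$$
\sqrt{u(z)}\ge \frac{r_\Om}{\sqrt{2}}.
$$
Next, I would invoke the first inequality in \eqref{bound-u-distance}, which is a direct consequence of Lemma \ref{lem:bound-gradient} together with the line integration argument used in the proof of Theorem \ref{th:maximum-point-torsion}, to get
$$
\sqrt{u(z)}\le \sqrt{\frac{N+(N-1)\,\cM_0^-\,G}{2}}\,d_\Ga(z).
$$
Combining these two displays cancels the $\sqrt{2}$ and produces
$$
\frac{d_\Ga(z)}{r_\Om}\ge \frac{1}{\sqrt{N+(N-1)\,\cM_0^-\,G}}.
$$

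The last step is to replace $\cM_0^-$ and $G$ by the claimed explicit geometric quantities: using $\cM_0^-\le 1/r_e$ and the gradient bound from \cite{MP1} recalled above, one has
$$
(N-1)\,\cM_0^-\,G\le (N-1)\,c_N\,\frac{\diam(\Om)}{r_e}\left(1+\frac{\diam(\Om)}{r_e}\right),
$$
and inserting this in the previous inequality delivers the claimed bound. No obstacle of substance is expected here: since all the a priori estimates being chained together hold unconditionally under the $C^{1,\al}$ and uniform exterior sphere assumptions, the restriction $(N-1)\cM_0^-\,r_\Om<1$ that was needed in Corollary \ref{cor:domain-curvature} (to close a bootstrap on $G$) is not required. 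The only mildly delicate point is to make sure that Lemma \ref{lem:bound-gradient}, proved there for $C^2$ boundaries, can be used under the weaker regularity $C^{1,\al}$ with uniform exterior sphere; this is handled by the same approximation scheme outlined in part (ii) of the proof of Lemma \ref{lem:bound-gradient}, approximating $\Om$ from outside by smoother domains for which both the gradient estimate from \cite{MP1} and the bound on $\cM_0^-$ pass to the limit.
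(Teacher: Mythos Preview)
Your argument is exactly the one the paper intends: it explicitly says that the corollary follows from the bound $\cM_0^-\le 1/r_e$, the gradient estimate from \cite{MP1}, and the first inequality in \eqref{bound-u-distance}, combined (implicitly) with Lemma~\ref{lem:relationdist}. Your added remark about handling the $C^{1,\alpha}$ regularity via the approximation scheme of Lemma~\ref{lem:bound-gradient}(ii) is a point the paper leaves tacit, and your treatment of it is appropriate.
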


\bigskip

\subsection{Small and large diffusion} 
\label{subsec:small-diffusion}
We conclude this section by considering a problem that is associated to \eqref{torsion}:
\begin{equation*}
\ve\,\De v^\ve=v^\ve \ \mbox{ in } \ \Om, \quad v^\ve=1 \ \mbox{ on } \ \Ga,
\end{equation*}
where $\ve$ is a positive diffusion parameter.
This problem is related to the torsional rigidity $u$, because the function $u^\ve=N \ve \,(1-v^\ve)$ is the solution of
\begin{equation}
\label{small-large-diffusion}
-\De u^\ve+\ve^{-1} u^\ve=N \ \mbox{ in } \ \Om, \quad u^\ve=0 \ \mbox{ on } \ \Ga.
\end{equation}
This means that $v^\ve=1-u^\ve/(N \ve)$ with $u^\ve\to u$ as $\ve\to\infty$. Notice that $u^\ve$ is always positive by the maximum principle.

\begin{rem}
Varadhan's formula (see \cite{Va} and \cite{BM2}) informs us that 
$$
-\sqrt{\ve}\,\log v^\ve(x)\to  d_\Ga(x) \ \mbox{ as } \ \ve\to 0^+.
$$
Since this convergence is known to be uniform on $\ol{\Om}$, we know that the set $\cC_\ve$ of maximum points of $u^\ve$ --- which is the set of minimum points of $v^\ve$ --- tends to the set $\cC_0$ of the maximum points of $\dist(\cdot,\Ga)$, in the sense that $\dist(\cC_\ve, \cC_0)\to 0$ as $\ve\to 0^+$.
In other words, we can infer that for any sequence $\{z^\ve\}_{\ve>0}$ with $z^\ve\in\cC_\ve$, $\ve>0$, it holds that
$$
\lim_{\ve\to 0^+} \frac{\dist(z^\ve, \Ga)}{r_\Om}=1.
$$
In fact, by the uniform convergence of $u^\ve$, any converging subsequence $\{z^\ve\}_{\ve>0}$ converges to a maximum point of $\dist(\cdot, \Ga)$, and hence $\dist(z^\ve, \Ga)\to r_\Om$ as $\ve\to 0^+$.
\end{rem}
\par
The aim of this subsection is now to derive a bound similar to \eqref{dist-torsion} for the maximum points of $u^\ve$ (or the minimum points of $v^\ve$) and to study its evolution in dependence of the diffusion parameter $\ve$ as it goes to $\infty$. 
\par
To proceed further, we need a gradient bound for $u^\ve$, similar to that of  Lemma \ref{lem:bound-gradient}. As a matter of fact, by a little more effort, one can obtain such a bound for any solution of the problem
\begin{equation}
\label{laplace-semilinear}
-\De u=f(u) \ \mbox{ and } \ u\ge 0\ \mbox{ in } \ \Om, \quad u=0 \ \mbox{ on } \ \Ga,
\end{equation}
where $f\in C^1(\RR)$. For later use and the reader's convenience, here below we adjust and prove the statements contained in \cite{PP2} and \cite{CGS}.

\begin{lem}[Gradient estimate for semilinear equations]
\label{lem:gradient-estimate-semilinear}
Let $\Om\subset\RR^N$, $N\ge 2$, be a bounded domain with mean convex boundary $\Ga$.
Let $u$ be a solution of class $C^1(\ol{\Om})\cap C^2(\Om)$ of \eqref{laplace-semilinear} and set
$$
M=\max_{\ol{\Om}} u.
$$
\par
Suppose that $f\in C^1(\RR)$ is such that
$$
\int_u^M f(s)\,ds\ge 0 \ \mbox{ for } \ 0\le u\le M.
$$
Then, the function defined by
$$
\frac12\,|\na u|^2-\int_u^M  f(\si)\,d\si\ \mbox{ on } \ \ol{\Om}
$$ 
attains its maximum at some critical point of $u$, and hence it holds that
\begin{equation}
\label{gradient-semilinear}
|\na u|^2\le 2\,\int^M_u f(\si)\,d\si\ \mbox{ on } \ \ol{\Om}.
\end{equation}
\end{lem}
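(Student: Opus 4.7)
The plan is to adapt the $P$-function/Bernstein argument of Lemma \ref{lem:bound-gradient} to the semilinear setting. Define
$$
P=\frac12|\na u|^2-\int_u^M f(\si)\,d\si \ \mbox{ on } \ \ol{\Om},
$$
so that the desired inequality \eqref{gradient-semilinear} is just $P\le 0$, and the value of $P$ at any critical point $x$ of $u$ equals $-\int_{u(x)}^M f$, which is $\le 0$ thanks to the hypothesis. Thus it will suffice to show that the maximum of $P$ is attained at some critical point of $u$.

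First I would derive a Bernstein-type differential inequality for $P$. A direct calculation, using $\De u=-f(u)$ (so that $\na u\cdot\na(\De u)=-f'(u)|\na u|^2$), gives $\De P=|\na^2 u|^2-f(u)^2$ and, after expanding $|\na P|^2=|\na^2 u\,\na u|^2+2f(u)\,\na u\cdot\na P-f(u)^2|\na u|^2$, the identity
$$
|\na u|^2\,\De P-|\na P|^2+2f(u)\,\na u\cdot\na P=|\na u|^2|\na^2 u|^2-|\na^2 u\,\na u|^2\ge 0 \ \mbox{ in } \ \Om,
$$
where the last inequality is Cauchy--Schwarz. On the open set $\{|\na u|>0\}$ this becomes a linear elliptic inequality with bounded coefficients to which the strong maximum principle applies.

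Next I would compute the boundary derivative of $P$ on $\Ga$, in complete analogy with the torsion case. Since $u$ is constant on $\Ga$, $\na u$ is parallel to the inward normal $\nu$ and $|\na u|=u_\nu$ on $\Ga$; combining this with Reilly's identity \eqref{reilly-identity} and $\De u=-f(0)$ on $\Ga$, I obtain $u_{\nu\nu}=-f(0)+(N-1)\cM\,u_\nu$. A direct computation then yields
$$
P_\nu=u_\nu\,u_{\nu\nu}+f(0)\,u_\nu=(N-1)\,\cM\,u_\nu^2\ge 0 \ \mbox{ on } \ \Ga,
$$
since $\Ga$ is mean convex.

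Finally, I would put the pieces together with the classical strong maximum principle / Hopf argument. On each connected component of $\{|\na u|>0\}$ the function $P$ satisfies a linear uniformly elliptic inequality, so its maximum is attained either on $\Ga$ or at a point of $\partial\{|\na u|>0\}\cap\Om$, i.e. at an interior critical point of $u$. A boundary maximum on $\Ga$ is excluded by the Hopf lemma (which would force $P_\nu<0$, contradicting $P_\nu\ge 0$) unless $P$ is constant, in which case the value is already equal to $P$ at any interior critical point, namely at any global maximum point of $u$. In either case, $\max_{\ol\Om}P=P(z^*)=-\int_{u(z^*)}^M f(\si)\,d\si\le 0$ by the sign hypothesis on $\int_u^M f$, and \eqref{gradient-semilinear} follows. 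The only genuine delicate point is the degeneracy of the coefficients at critical points of $u$; this is handled in the standard way (as in \cite{PP2,CGS}) by working in components of $\{|\na u|>0\}$, noting that the boundary of such components inside $\Om$ consists exclusively of critical points, where $P$ is automatically $\le 0$.
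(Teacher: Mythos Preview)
Your proof is correct and follows essentially the same route as the paper: the same $P$-function, the same Bernstein identity $|\na u|^2\De P-|\na P|^2+2f(u)\,\na u\cdot\na P=|\na u|^2|\na^2 u|^2-|\na^2 u\,\na u|^2$, the same boundary computation $P_\nu=(N-1)\,\cM\,|\na u|^2\ge 0$ via \eqref{reilly-identity}, and the same conclusion by the strong maximum principle and Hopf's lemma. Your discussion of the degeneracy at critical points (working on components of $\{|\na u|>0\}$) is slightly more explicit than the paper's, which instead invokes an approximation to $C^{2,\ga}$ boundaries to guarantee the needed regularity, but the substance is identical.
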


\begin{proof}
Up to the usual approximation argument, we can only present our proof in case $\Ga$ is of class $C^{2,\ga}$. Set
$$
P=\frac12\,|\na u|^2-\int_u^M f(\si)\,d\si.
$$
We then compute:
$$
\na P=\na^2 u\,\na u+f(u)\,\na u \ \mbox{ and } \ \De P=|\na^2 u|^2-f(u)^2.
$$
In the last identity, we have used the differential equation in \eqref{laplace-semilinear} and its gradient.
We then easily get the identity:
$$
|\na u|^2 \De P-|\na P|^2+2\,f(u)\,\lan\na u,\na P\ran= 
|\na u|^2 |\na^2 u|^2-|\na^2 u\,\na u|^2.
$$
Thus, we have that
\begin{equation}
\label{subsolution-f(u)}
|\na u|^2 \De P-|\na P|^2+2\,f(u)\,\lan\na u,\na P\ran\ge 0 \ \mbox{ in } \ \Om,
\end{equation}
since $|\na^2 u\,\na u|^2\le |\na u|^2 |\na^2 u|^2$. 
Next, we also have that
\begin{multline*}
P_\nu=|\na u|\,u_{\nu\nu}+f(0)\,u_\nu= \\ |\na u|\,\{ (N-1)\,|\na u|\,\cM-f(0)\}+f(0)\,|\na u|= 
(N-1)\,|\na u|^2 \cM \ \mbox{ on } \ \Ga,
\end{multline*}
from \eqref{reilly-identity}, \eqref{laplace-semilinear}, and
since $\nu=\na u/|\na u|$ on $\Ga$. Thus, $P_\nu\ge 0$ on $\Ga$,
being $\Ga$ mean convex.  
As observed before, this inequality and \eqref{subsolution-f(u)} tell us that the maximum of $P$ cannot be attained at a boundary point, by the strong maximum principle and Hopf's boundary lemma.
\par
All in all, the maximum of $P$ must be attained at a critical point of $u$ at which
$$
P= -\int_u^M f(\si)\,d\si\le 0,
$$
and hence $P\le 0$ on $\ol{\Om}$.
\end{proof}

Based on Lemma \ref{lem:gradient-estimate-semilinear}, we obtain the following estimate.  
\begin{thm}
\label{th:maximum-point-small-diffusion}
Let $\Om$ be a bounded domain with mean convex boundary $\Ga$. Let $f\in C^1(\RR)$ 
and set
$$
F(s)=\int_0^s f(\si)\,d\si, \ s\in\RR.
$$
\par
If $z\in\Om$ is any (global) maximum point in $\Om$ of the solution $u$ of \eqref{laplace-semilinear}, then we have that
\begin{equation}
\label{dist-semilinear}
r_\Om\ge d_\Ga(x)\ge \frac1{\sqrt{2}}\int_0^{u(x)}\frac{ds}{\sqrt{F(u(z))-F(s)}} \ \mbox{ for } \ x\in\Om.
\end{equation}
\end{thm}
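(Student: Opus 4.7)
The plan is to mimic the proof of Theorem \ref{th:maximum-point-torsion}, but using the sharper gradient estimate from Lemma \ref{lem:gradient-estimate-semilinear}. Writing $M=u(z)=\max_{\ol\Om} u$, that lemma yields
$$
|\na u| \le \sqrt{2}\,\sqrt{F(M)-F(u)} \quad \mbox{on }\ol{\Om},
$$
since $\int_u^M f(\si)\,d\si = F(M)-F(u)\ge 0$. The non-negativity on the right is what makes the substitution into an ``arclength in $u$'' meaningful: define
$$
\Phi(\si)=\int_0^\si \frac{ds}{\sqrt{F(M)-F(s)}}, \quad 0\le \si < M,
$$
extended as a (possibly improper) integral up to $\si=M$. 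The inequality we want is precisely $d_\Ga(x)\ge \Phi(u(x))/\sqrt{2}$.

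Next, I would fix $x\in\Om$, pick $y\in\Ga$ with $|x-y|=d_\Ga(x)$, and parametrize the closing segment by $\ga(t)=x+t(y-x)$, $t\in[0,1]$, so that $u(\ga(0))=u(x)$ and $u(\ga(1))=u(y)=0$. On the set where $u(\ga(t))<M$, one has
$$
\left|\frac{d}{dt}\Phi(u(\ga(t)))\right|
=\frac{|\na u(\ga(t))\cdot(y-x)|}{\sqrt{F(M)-F(u(\ga(t)))}}
\le \frac{|\na u(\ga(t))|\,d_\Ga(x)}{\sqrt{F(M)-F(u(\ga(t)))}}
\le \sqrt{2}\,d_\Ga(x),
$$
by the gradient bound above. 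Integrating from $0$ to $1$ yields
$$
\Phi(u(x))=|\Phi(u(x))-\Phi(u(y))|\le \sqrt{2}\,d_\Ga(x),
$$
which is exactly the second inequality claimed in \eqref{dist-semilinear}; the first inequality $r_\Om\ge d_\Ga(x)$ is immediate from the definition of the inradius.

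The main technical point is to justify the identity
$\Phi(u(x))-\Phi(u(y))=\int_0^1\frac{d}{dt}\Phi(u(\ga(t)))\,dt$
when the segment $[x,y]$ meets the (closed, zero-measure) set $\{u=M\}$ where $\Phi'\circ u$ blows up. I would treat this by an approximation argument: for $\de>0$ let $\Phi_\de(\si)=\int_0^\si ds/\sqrt{F(M)-F(s)+\de}$, which is smooth on $[0,M]$, deduce the same chain of inequalities with $\Phi_\de$ in place of $\Phi$, and then let $\de\to 0^+$, using monotone convergence to recover $\Phi(u(x))$ on the left-hand side. This takes care of the case in which $x=z$ (where the integrand is genuinely improper) and of the case in which the segment $[x,y]$ happens to pass through other maximum points, so no monotonicity of $t\mapsto u(\ga(t))$ is ever needed.
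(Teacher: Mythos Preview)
Your argument is correct and follows essentially the same path as the paper's: pick a nearest boundary point, parametrize the segment, use the gradient bound from Lemma~\ref{lem:gradient-estimate-semilinear} to control the derivative of the primitive $\Phi\circ u$ along the segment, and integrate. Your explicit $\Phi_\de$-approximation to handle the possible singularity at $\{u=M\}$ is a welcome bit of extra care that the paper's proof leaves implicit.
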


\begin{proof}
Take $x\in\Om$ and let $y\in\Ga$ be such that $|x-y|=d_\Ga(x)$. Then,
compute:
\begin{multline*}
\frac{d}{d\tau}\int_{u(x+\tau (y-x))}^{u(x)}\frac{ds}{\sqrt{2\,[F(u(z))-F(s)]}}= \\
\frac{\na u(u(x+\tau (y-x)))\cdot(x-y)}{\sqrt{2\,[F(u(z))-F(u(x+\tau (y-x)))]}} \le 
|x-y|=d_\Ga(x),
\end{multline*}
thanks to \eqref{gradient-semilinear}. Integrating in $\tau$ on $[0,1]$ thus gives \eqref{dist-semilinear}, since $u(y)=0$.
\end{proof}

\bigskip

Next, we choose $f(\si)=N-\si/\ve$, that gives $2 F(s)=\ve\, [N^2- (N-s/\ve)^2]$, and analyse the behavior of the points in $\cC_\ve$ as $\ve\to\infty$. 

\begin{cor}
\label{cor:dist-small-large-diffusion}
Let $\Om$ be a bounded domain with mean convex boundary $\Ga$. For $\ve>0$, let $u^\ve$ be the solution of \eqref{small-large-diffusion}.
\par
If $z_\ve\in\cC_\ve$, then it holds that
\begin{equation*}
d_\Ga(z_\ve)\ge  
\sqrt{\ve}\, \cosh^{-1} \left[\frac{N}{N-u^\ve(z_\ve)/\ve}\right],
\end{equation*}
where $\cosh^{-1}:[1,\infty)\to[0,\infty)$ is the inverse function of the hyperbolic cosine $\cosh:[0,\infty)\to[1,\infty)$.
\end{cor}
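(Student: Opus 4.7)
The strategy is to specialize Theorem \ref{th:maximum-point-small-diffusion} to $f(\sigma)=N-\sigma/\ve$ (so that \eqref{laplace-semilinear} reduces exactly to \eqref{small-large-diffusion}) and then compute the resulting quadrature in closed form. First I would verify that this $f$ fits the framework: by the maximum principle applied to $N\ve$ as a (strict) supersolution with Dirichlet data $0$ replaced by $N\ve$, one has $u^\ve<N\ve$ in $\ol\Om$, hence $f(u^\ve)\ge 0$, hence $\int_u^M f(\sigma)\,d\sigma\ge 0$ for $0\le u\le M:=u^\ve(z_\ve)$, which is the sign hypothesis needed to invoke Lemma \ref{lem:gradient-estimate-semilinear} and therefore Theorem \ref{th:maximum-point-small-diffusion}.

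Next, with $f(\sigma)=N-\sigma/\ve$, a direct antidifferentiation yields
$$
2\,F(s)=2Ns-\frac{s^2}{\ve}=\ve\left[N^2-\left(N-\frac{s}{\ve}\right)^2\right],
$$
so that, setting $b:=N-u^\ve(z_\ve)/\ve\in(0,N]$, one has
$$
2\bigl[F(u^\ve(z_\ve))-F(s)\bigr]=\ve\left[\left(N-\frac{s}{\ve}\right)^2-b^2\right].
$$
Applying \eqref{dist-semilinear} at $x=z_\ve$ therefore gives
$$
d_\Ga(z_\ve)\ge \int_0^{u^\ve(z_\ve)}\frac{ds}{\sqrt{\ve}\,\sqrt{(N-s/\ve)^2-b^2}}.
$$

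Now I would perform the change of variables $a=N-s/\ve$, $da=-ds/\ve$, under which the bounds $s=0,\ u^\ve(z_\ve)$ map to $a=N,\ b$. The integral becomes
$$
\sqrt{\ve}\int_b^N\frac{da}{\sqrt{a^2-b^2}}=\sqrt{\ve}\,\Bigl[\cosh^{-1}(a/b)\Bigr]_b^N=\sqrt{\ve}\,\cosh^{-1}(N/b),
$$
since $\cosh^{-1}(1)=0$. Substituting back the definition of $b$ produces exactly the claimed bound
$$
d_\Ga(z_\ve)\ge \sqrt{\ve}\,\cosh^{-1}\!\left[\frac{N}{N-u^\ve(z_\ve)/\ve}\right].
$$

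There is no real obstacle in this argument; the only points that require care are the a priori bound $u^\ve<N\ve$ (needed so both that the sign condition of Lemma \ref{lem:gradient-estimate-semilinear} holds and that $b>0$ so the $\cosh^{-1}$ is defined), and the recognition of $\int(a^2-b^2)^{-1/2}\,da$ as $\cosh^{-1}(a/b)$. Everything else is a routine substitution in the inequality already proved in Theorem \ref{th:maximum-point-small-diffusion}.
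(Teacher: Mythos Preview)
Your proposal is correct and follows exactly the paper's approach: set $x=z_\ve$ and $f(\sigma)=N-\sigma/\ve$ in \eqref{dist-semilinear} and compute the resulting integral. You have simply filled in the details (the a priori bound $u^\ve<N\ve$, the explicit change of variables leading to $\cosh^{-1}$) that the paper leaves to the reader.
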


\begin{proof}
The inequality follows by setting $x=z_\ve$ and $f(\si)=N-\si/\ve$ in \eqref{dist-semilinear}, and by computing the integral.
\end{proof}

\begin{rem}
\par
By proceeding further, we have that
\begin{equation}
\label{bound-small-large-diffusion}
d_\Ga(z_\ve)\ge  
\sqrt{\ve}\, \cosh^{-1} \left[\frac{N}{N-q^\ve(r_\Om)/\ve}\right],
\end{equation}
where
$$
q^\ve(r)=N\,h^\ve(0)\int_0^{r}\left(\int_0^s \si^{N-1} h^\ve(\si)\,d\si\right)\frac{ds}{s^{N-1} h^\ve(s)^2}
$$
and
$$
h^\ve(\si)=\int_0^\pi e^{\frac{\si}{\sqrt{\ve}}\cos\te} (\sin\te)^{N-2} d\te.
$$
\par
In fact, by comparing $u^\ve$ to the solution $w^r$ of 
\eqref{small-large-diffusion}  in the ball $B_r(x)$ with $r= d_\Ga(x)$, we infer that $u^\ve\ge w^r$ on 
$\ol{B_r(x)}$, and hence $u^\ve(x)\ge w^{ d_\Ga(x)}(x)$. Thus, by taking an incenter $x_\Om$, we have that
$w^{r_\Om}(x_\Om)\le u^\ve(x_\Om)\le u^\ve(z_\ve)$. Corollary \ref{cor:dist-small-large-diffusion} then gives \eqref{bound-small-large-diffusion} since $w^{r_\Om}(x_\Om)=q^\ve(r_\Om)$.
\par
It is easily seen that, as $\ve\to\infty$, $q^\ve(r_\Om)\to r_\Om^2/2$, and hence the right-hand side of \eqref{bound-small-large-diffusion} tends to $r_\Om/\sqrt{N}$, in accordance with \eqref{dist-torsion}.
\end{rem}

\bigskip

\section{On the location of hotspots in a grounded heat conductor}
\label{sec:eigenfunction-heat}

In this section, we shall treat the parabolic case and the case of the first eigenfunction, which are intimately connected.

\subsection{The hot spot for large times}
As is well known, the first Dirichlet eigenfunction $\psi_1$ of $-\De$ in $\Om$, that we assume to have unitary norm in $L^2(\Om)$, controls the behaviour of the solution of \eqref{heat-problem} for large times.  We shall denote by $\la_1(\Om)$ the eigenvalue corresponding to $\psi_1$. We know that $\psi_1$ is a solution of the problem:
\begin{equation}
\label{eigenfunction}
\De u+\la\,u=0 \ \mbox{ in } \ \Om, \quad u=0 \ \mbox{ on } \ \Ga,
\end{equation}
for some $\la\in\RR$. If $\la=\la_1(\Om)$,
$\psi_1$ can be assumed to be positive in $\Om$.  The following inequality holds for bounded domains with a mean convex boundary and directly follows from  Lemma \ref{lem:gradient-estimate-semilinear}, by choosing $f(u)=\la_1(\Om)\, u$:
\begin{equation}
\label{max-max-eigenfunction}
|\na \psi_1|^2\le \la_1(\Om)\,(M_1^2-\psi_1^2) \ \mbox{ on } \ \ol{\Om} \ \mbox{ with } \ M_1=\max_{\ol{\Om}} \psi_1.
\end{equation}

\begin{proof}[\bf Proof of Theorem \ref{th:maximum-point-eigenfunction}]
Let $y$ be the nearest point to $z$ in $\Ga$. Then \eqref{dist-eigenvalue} follows from:
\begin{multline*}
\frac{\pi}{2}=\int_0^1 \frac{d}{d\te} \arcsin\left[\frac{\psi_1(y+\te (z-y))}{M_1}\right] d\te= \\
\int_0^1 \frac{\na \psi_1(y+\te (z-y))\cdot (z-y)}{\sqrt{M_1^2-\psi_1(y+\te (z-y))^2}} d\te \le 
 \sqrt{\la_1(\Om)}\, |z-y|=\sqrt{\la_1(\Om)}\, d_\Ga(z).
\end{multline*}
Here, we have used Cauchy-Schwarz inequality and \eqref{max-max-eigenfunction}.
\par
Let $B_{r_\Om}$ be a maximal ball contained in $\Om$. Then, we have that
\begin{equation}
\label{dilation-faber-krahn}
\la_1(\Om)\le\la_1(B_{r_\Om})=\frac{\la_1(B)}{r_\Om^2},
\end{equation}
by the monotonicity and the scaling properties of $\la_1$.
Therefore, \eqref{dist-eigenvalue-inradius} easily follows from \eqref{dist-eigenvalue}.
\end{proof}

\begin{rem}
\label{rem:comparison-bms}
Theorem \ref{th:maximum-point-eigenfunction} greatly improves \cite[Theorem 2.7 and 2.8]{BMS}. In particular, inequality \eqref{dist-eigenvalue-inradius} may be compared to \cite[Ineq. (1.7)]{BMS}:
$$
\frac{ d_\Ga(z)}{r_\Om}\ge\left(\frac{N}{2}\right)^{N-1}\frac{\om_{N-1}}{\om_N \la_1(B)^N} \left[\frac{2\,r_\Om}{\diam(\Om)}\right]^{N^2-1},
$$
that was obtained for bounded convex domains in $\RR^N$, by an argument reminescent of that used to prove Alexandrov-Bakelman-Pucci maximum principle. 
\par
Notice that unlike in \eqref{dist-eigenvalue-inradius} the right-hand side in the last inequality depends on the \textit{eccentricity} $2 r_\Om/\diam(\Om)$ of the convex domain $\Om$, that becomes arbitrarily small for long and thin domains.
\par
Also, we have that
$$
\left(\frac{N}{2}\right)^{N-1}\!\!\!\!\frac{\om_{N-1}}{\om_N \la_1(B)^N} \left[\frac{2\,r_\Om}{\diam(\Om)}\right]^{N^2-1}\le\left(\frac{N}{2}\right)^{N-1}\!\!\!\!\frac{\om_{N-1}}{\om_N \la_1(B)^N} \le
\frac{\pi}{2\sqrt{\la_1(B)}},
$$
thanks to the explicit value of $\la_1(B)$.
\end{rem}

\begin{rem}[The Lane-Emden equation]
As an interesting instance in the semilinear case, we just comment on the \textit{Lane-Emden equation}, widely studied in the literature, for instance, in connection with the large time behavior of the \textit{porous medium equation}. The problem we have in mind occurs in the minimization of the Dirichlet energy functional on the unit sphere of $L^q(\Om)$:
$$
\la_q(\Om)=\inf\left\{\int_\Om |\na v|^2 dx: v\in W^{1,2}_0(\Om) \ \mbox{ and } \ \int_\Om |v|^q dx=1\right\}.
$$
This variational problem has solution for $1<q<2^*$, where $2^*$ is the critical \textit{Sobolev's exponent}, that equals $\infty$ for $N=2$ and $2N/(N-2)$ for $N\ge 3$. The relevant minimizer $u$ is the $L^q(\Om)$-normalized solution of the problem
\begin{equation}
\label{lane-emden}
-\De u=\la_q(\Om)\,|u|^{q-2} u \ \mbox{ in } \ \Om, \quad u=0 \ \mbox{ on } \ \Ga.
\end{equation}
It has been recently proved that, for $1<q<2$, the positive least energy solution  of \eqref{lane-emden} are isolated in the $L^1(\Om)$-topology (see \cite{BDF} for all the details). 
\par
We may use for $u$ Lemma \ref{lem:gradient-estimate-semilinear} and the same arguments used in the proof of Theorem \ref{th:maximum-point-small-diffusion}, and obtain:
\begin{equation}
\label{lane-emden-ineq}
d_\Ga(z)\ge \sqrt{\frac{q}{2\la_q(\Om)}}\,\left(\max_{\ol{\Om}}u\right)^{1-q/2} \int_0^1\frac{d\si}{\sqrt{1-\si^q}}.
\end{equation}
For $q=2$ we recover \eqref{dist-eigenvalue}.
\par
Moreover, similarly to \eqref{dilation-faber-krahn}, we get that $\la_q(\Om)\le r_\Om^{-2+N(1-2/q)} \la_q(B)$. Thus, by the fact that
$$
|\Om|^{1/q}\max_{\ol{\Om}}u\ge \nr u\nr_{L^q(\Om)}=1,
$$
we arrive at the following extended version of \eqref{dist-eigenvalue-inradius}:
$$
\frac{d_\Ga(z)}{r_\Om}\ge \sqrt{\frac{q}{2\, \la_q(B)}}\,\int_0^1\frac{d\si}{\sqrt{1-\si^q}} \left(\frac{r_\Om^N}{|\Om|}\right)^{\frac1{q}-\frac12} \ \mbox{ for } \ 1<q\le 2.
$$
\end{rem}

\bigskip

\subsection{The hot spot at any fixed time}
We now turn to the parabolic case, that concerns the problem 
\eqref{heat-problem}.
As already mentioned, the initial distribution of temperature $g$ is a non-negative function of class $C^1(\ol{\Om})$ and vanishes on $\Ga$. 
It is well-known that a bounded solution $u=u(x,t)$ of class $C^1(\ol{\Om}\times[0,\infty))\times C^2(\Om\times(0,\infty))$ of \eqref{heat-problem} exists and is unique under suitable sufficient conditions on $\Om$ and $g$ (see \cite{Fr}). 
\par
It may be interesting to consider the case in which $g\equiv 1$ (or when $g$ does not vanish on $\Ga$).  We need a little more care in this instance, since the data on $\pa (\Om\times (0,\infty))$ is discontinuous. Nevertheless, it is easy to see that a bounded solution of class $C^0(\ol{\Om}\times(0,\infty))\times C^2(\Om\times(0,\infty))$ exists and is unique.
\par
The strong maximum principle tell us that
$$
0<u<\max_{\ol{\Om}} g \ \mbox{ in } \ \Om\times (0,\infty)
$$
and, once a Hopf boundary lemma is applicable 
(see \cite{MR2839047} for optimal conditions), the maximum
$$
M(t)=\max_{x\in\ol{\Om}} u(x,t)
$$
is attained for every $t>0$ at \textit{internal} points, that are called \textit{hot spots} --- the maximum points of the temperature $u$.
We shall denote by $\cH(t)$ the set of hot spots at time $t>0$, that is
$$
\cH(t)=\{ x\in\Om: u(x,t)=M(t)\}.
$$

Versions of Lemmas \ref{lem:bound-gradient} and \ref{lem:gradient-estimate-semilinear} are obtained in \cite{PP2}, \cite{PPV} for the solution of \eqref{heat-problem}. Here below, we use some of those ideas to obtain ad hoc estimates instrumental to our aims.
In what follows, $\phi_1$ is the first Dirichlet eigenfunction, that we normalize by requiring that
$$
\max_{\ol{\Om}}\phi_1=1.
$$

We first recall the following estimate from \cite[Lemma 1]{PPV}.

\begin{lem}
\label{lem:bound-u-eigenfunction}
Let $\Om$ be a bounded domain in $\RR^N$, $N\ge 2$, 
and suppose that $g$ is a non-negative function of class $C^1(\ol{\Om})$, such that $g\equiv 0$ on  $\Ga$.
\par
Let $u=u(x,t)$ be a bounded solution of class $C^1(\ol{\Om}\times[0,\infty))\times C^2(\Om\times(0,\infty))$ of \eqref{heat-problem}. If
$$
\sup_\Om\left(\frac{g}{\phi_1}\right)<\infty,
$$
then
$$
u(x,t)\le \sup_\Om \left(\frac{g}{\phi_1}\right) \phi_1(x)\,e^{-\la_1(\Om) t} \ \mbox{ for } \ (x,t)\in\ol{\Om}\times (0,\infty).
$$

\end{lem}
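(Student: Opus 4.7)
My plan is to prove the inequality by comparison with the explicit supersolution
$$
v(x,t) = C\,\phi_1(x)\,e^{-\la_1(\Om)\,t}, \quad C := \sup_{\Om}\!\left(\frac{g}{\phi_1}\right),
$$
and invoke the (parabolic) maximum principle for the difference $v-u$. First I would check that $v$ is a classical solution of the heat equation on $\Om\times(0,\infty)$: since $\phi_1$ satisfies $\De\phi_1+\la_1(\Om)\,\phi_1=0$, a direct computation gives $v_t-\De v=-\la_1(\Om)\,v+\la_1(\Om)\,v=0$. Moreover $v$ is continuous up to $\ol{\Om}\times[0,\infty)$ and vanishes on $\Ga\times(0,\infty)$, matching the lateral boundary data of $u$.

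Next I would verify the initial comparison $v(x,0)\ge g(x)$ on $\ol{\Om}$. In the interior, this is immediate from the definition of $C$, since $g/\phi_1\le C$ wherever $\phi_1>0$. On $\Ga$ both sides equal zero by hypothesis on $g$ and on $\phi_1$, and by continuity the inequality extends to $\ol{\Om}$. (Here we are using exactly the hypothesis $\sup_\Om(g/\phi_1)<\infty$: this makes $C$ finite, which is the only nontrivial point, and it implicitly controls the blow-up rate of $g$ near $\Ga$ against that of $\phi_1$.)

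The main step is then the application of the parabolic maximum principle to $w:=v-u\in C^0(\ol{\Om}\times[0,\infty))\cap C^2(\Om\times(0,\infty))$ and bounded. The function $w$ satisfies $w_t-\De w=0$ in $\Om\times(0,\infty)$, $w\ge 0$ on the parabolic boundary $(\ol{\Om}\times\{0\})\cup(\Ga\times(0,\infty))$, and is bounded. The weak maximum principle for bounded solutions on (possibly merely bounded) cylinders then yields $w\ge 0$ throughout $\ol{\Om}\times[0,\infty)$, which is exactly the claimed inequality
$$
u(x,t)\le C\,\phi_1(x)\,e^{-\la_1(\Om)\,t}.
$$

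The only mildly delicate point — which I view as the main technical obstacle — is justifying the maximum principle at $t=0$, given the slight mismatch in regularity (the problem statement requires $g\in C^1(\ol{\Om})$ while $u$ is $C^1(\ol{\Om}\times[0,\infty))$). This is handled in the standard way by first applying the principle on $\ol{\Om}\times[\tau,T]$ for arbitrary $0<\tau<T<\infty$, using that $w$ is continuous up to $t=0$, and then letting $\tau\to 0^+$ and $T\to\infty$. Everything else is bookkeeping once the explicit supersolution $v$ is identified.
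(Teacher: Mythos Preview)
Your proof is correct and follows essentially the same approach as the paper: define the explicit caloric function $C\,\phi_1(x)\,e^{-\la_1(\Om) t}$ with $C=\sup_\Om(g/\phi_1)$, check that it dominates $g$ initially and vanishes laterally, and conclude by the maximum principle. The paper's version is simply terser, omitting the verification details you spelled out.
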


\begin{proof}
The function defined by 
$$
\sup_\Om \left(\frac{g}{\phi_1}\right) \phi_1(x)\,e^{-\la_1(\Om) t}, \ (x,t)\in\ol{\Om}\times (0,\infty),
$$
is a solution of the heat equation and is zero on $\Ga\times[0,\infty)$. Moreover, it bounds $g$ from above on $\Om\times\{ 0\}$, by construction. The claim then follows from the maximum principle.
\end{proof}

As already declared in the introduction, in this section we limit our description to the fairly general case of mean convex boundaries, that considerably simplifies matters.

\begin{lem}[A bound for the gradient of $u$]
Let $\Om$ be a bounded domain with a mean convex boundary $\Ga$. Suppose that $u\in C^1(\ol{\Om}\times[0,\infty))\times C^2(\Om\times(0,\infty))$ is the solution of \eqref{heat-problem} with $g\in C^1(\ol{\Om})$ and $g\not\equiv 0$. Then, for $\al\in\RR$,
the function $Q$, defined on $\ol{\Om}\times [0,\infty)$ by
$$
Q(x,t)=\frac12\,e^{2\al t} \bigl\{|\na u(x,t)|^2+\al\,u(x,t)^2\bigr\} \ \mbox{ for } \ (x,t)\in\ol{\Om}\times [0,\infty),
$$
attains its maximum value either at a critical point of $u$ or at a point in $\Om\times\{ 0\}$.
\end{lem}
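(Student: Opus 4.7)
The natural strategy is to run the \emph{parabolic $P$-function} argument, analogous to Lemmas \ref{lem:bound-gradient} and \ref{lem:gradient-estimate-semilinear}, but in the variable $P:=e^{-2\al t}Q=\tfrac12|\na u|^2+\tfrac{\al}{2}u^2$, and then translate the resulting inequality into one for $Q$ where the $0$-th order term disappears.

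First I would compute, by differentiating the heat equation $u_t=\De u$ twice and using $\na u_t=\na\De u$, the elementary parabolic identity
\begin{equation*}
\De P - P_t = |\na^2 u|^2 + \al\,|\na u|^2.
\end{equation*}
Next I would expand $\na P=\na^2 u\,\na u+\al u\,\na u$, use the Cauchy--Schwarz bound $|\na^2 u\,\na u|^2\le|\na^2 u|^2|\na u|^2$, and the identity $\lan\na^2 u\,\na u,\na u\ran=\na P\cdot\na u-\al u|\na u|^2$, to collect the terms and arrive at
\begin{equation*}
|\na u|^2(\De P-P_t)-|\na P|^2-2\al|\na u|^2 P+2\al\,u\,\na u\cdot\na P\ge 0 \ \mbox{ in } \ \Om\times(0,\infty).
\end{equation*}
Substituting $P=e^{-2\al t}Q$, the $P$-term cancels exactly against the one generated by $P_t$, and we obtain the clean differential inequality
\begin{equation*}
|\na u|^2(\De Q-Q_t)-e^{-2\al t}|\na Q|^2+2\al\,u\,\na u\cdot\na Q\ge 0 \ \mbox{ in } \ \Om\times(0,\infty).
\end{equation*}
In the open set $\{\na u\neq 0\}$, dividing by $|\na u|^2$ shows that $Q$ is a subsolution of a uniformly parabolic linear operator $\De+\vec b\cdot\na-\pa_t$ with locally smooth drift $\vec b=2\al u\,\na u/|\na u|^2$.

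For the boundary analysis on $\Ga\times(0,\infty)$, I would use that $u\equiv 0$ on $\Ga$ (so $u_t\equiv 0$ there), which via the heat equation gives $\De u=0$ on $\Ga\times(0,\infty)$. Reilly's identity \eqref{reilly-identity} then forces $u_{\nu\nu}=(N-1)\cM|\na u|$ on $\Ga$, while $u=0$ and $\na u=|\na u|\nu$ reduce the boundary derivative of $P$ (and hence of $Q$) to
\begin{equation*}
Q_\nu = e^{2\al t}|\na u|\,u_{\nu\nu} = (N-1)\,\cM\,|\na u|^2\,e^{2\al t}\ge 0 \ \mbox{ on } \ \Ga\times(0,\infty),
\end{equation*}
by the mean convexity of $\Ga$.

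The conclusion then follows by combining the strong parabolic maximum principle with Hopf's boundary lemma, applied in the open set $\{\na u\neq 0\}\cap(\Om\times(0,\infty))$: if $Q$ attained its maximum at some $(x_0,t_0)$ in this set with $t_0>0$, the strong maximum principle would force $Q$ to equal its maximum throughout the parabolic component, propagating backwards in time until either $t=0$ is reached (giving a maximum point in $\Om\times\{0\}$), or the boundary of $\{\na u\neq 0\}$ is hit (yielding, by continuity, a critical point of $u$ where $Q$ also attains its maximum), or the lateral boundary $\Ga$ is reached — but a maximum on $\Ga\times(0,\infty)$ is excluded by Hopf's lemma (which forces $Q_\nu<0$ with the inward normal) against the inequality $Q_\nu\ge 0$ just established. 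The main subtle point is precisely this handling of the degeneracy at $\na u=0$: the quasilinear character of the inequality for $Q$ forbids a direct global application of the maximum principle, so one must instead exploit the open set $\{\na u\neq 0\}$ and use continuity to transport the maximum to its boundary, where critical points of $u$ lie.
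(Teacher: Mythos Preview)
Your derivation of the differential inequality
\[
|\na u|^2(\De Q-Q_t)-e^{-2\al t}|\na Q|^2+2\al\,u\,\na u\cdot\na Q\ge 0
\]
and of the boundary relation $Q_\nu=(N-1)\,\cM\,|\na u|^2 e^{2\al t}\ge 0$ on $\Ga\times(0,\infty)$ are both correct and coincide with the paper's. The overall scheme (strong maximum principle away from $\{\na u=0\}$, plus Hopf on the lateral boundary) is exactly the paper's approach.

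There is, however, one genuine gap. You write that a maximum of $Q$ on $\Ga\times(0,\infty)$ is ``excluded by Hopf's lemma (which forces $Q_\nu<0$).'' But Hopf's boundary lemma only yields $Q_\nu<0$ under the proviso that $Q$ is \emph{not} constant on the relevant parabolic region; otherwise it gives nothing. Your propagation argument runs into the same issue: if the constancy region reaches $\Ga$, you again invoke Hopf, which is circular. A telltale sign that something is missing is that you never use the hypothesis $g\not\equiv 0$.

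The paper confronts this alternative head-on. Assuming $Q$ is constant (equal to some $Q_0$) on $\ol{\Om}\times[0,t_0]$, it first uses $g\not\equiv 0$ to ensure $Q_0>0$, hence $|\na u|>0$ on $\Ga\times[0,t_0]$. Then $Q_\nu\equiv 0$ forces $u_{\nu\nu}\equiv 0$ on $\Ga$, and the Reilly-type identity $0=u_t=\De u=u_{\nu\nu}-(N-1)\cM|\na u|$ gives $\cM\equiv 0$ on all of $\Ga$. This contradicts Minkowski's identity $\int_\Ga \cM\,\lan x,\nu\ran\,dS_x=|\Ga|$ for the compact hypersurface $\Ga$. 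So the constant case is genuinely impossible, and the dichotomy closes. You should insert this step (or an equivalent argument ruling out the constant alternative) to complete the proof.
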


\begin{proof}
As usual, up to an approximation argument we can assume that $\Ga$ is of class $C^{2,\ga}$, so that the standard regularity theory gives that $u$ has H\"older continuous second derivatives on $\ol{\Om}$ (see \cite{Fr}).
\par
Next, as explained in \cite{PP2}, $Q$ satisfies the differential inequality
$$
|\na u|^2 (\De Q-Q_t)\!-e^{-2\al t} |\na Q|^2\!+2\al\,u\,\na u\cdot\na Q\ge 0 \ \mbox{ in } \ \Om\times(0,\infty).
$$
Indeed, straightforward computations with the help of the first equation in \eqref{heat-problem} give:
\begin{multline*}
|\na u|^2 (\De Q-Q_t)\!-e^{-2\al t} |\na Q|^2\!+2\al\,u\,\na u\cdot\na Q= \\
e^{2\al t}\{|\na^2 u|^2 |\na u|^2\!-\!|\na^2 u\, \na u|^2\}.
\end{multline*}
\par
Thus, since the equation is parabolic away from the critical points of $u$, for any $T>0$ the maximum principle insures that the maximum value of $Q$ on $\ol{\Om}\times[0,T]$ can be attained either on $(\Om\times\{ 0\})\cup(\Ga\times[0,T])$ or at a critical point of $u$.
\par
Next, suppose by contradiction that $(x_0,t_0)$ is a point in $\Ga\times(0,T]$ at which $Q$ attains its maximum value. Then by the Hopf's boundary lemma we must have that either $Q$ is constant on $\ol{\Om}\times [0, t_0]$ or $Q_\nu<0$ at $(x_0,t_0)$. Thus, in the latter case
$$
0>Q_\nu=e^{2\al t_0} |\na u|\,u_{\nu\nu} \ \mbox{ and hence } \ u_{\nu\nu}<0 \ \mbox{ at } \ (x_0,t_0).
$$
On the other hand, the first two equations in \eqref{heat-problem} and the identity \eqref{reilly-identity} give that
\begin{equation}
\label{reilly-identity-2}
0=u_t=\De u=u_{\nu\nu}-(N-1)\,\cM\,|\na u| \ \mbox{ on } \ \Ga.
\end{equation}
Thus, $u_{\nu\nu}\ge 0$ on $\Ga$, since $\Ga$ is mean convex, and hence we have reached a contradiction at $(x_0,t_0)$. Therefore, $Q$ must be constant, say $Q_0$,  on $\ol{\Om}\times [0, t_0]$. Now, since $g\not\equiv 0$ and $Q$ is continuous on $\ol{\Om}\times [0, t_0]$, $Q_0$ must be positive. Thus, in particular we must have that 
$$
e^{2\al t}\,|\na u(x,t)|^2=Q_0>0 \ \mbox{ and } \ 0=Q_\nu=e^{2\al t} |\na u(x,t)|\,u_{\nu\nu}(x,t) 
$$
for $(x,t)\in\Ga\times [0,t_0]$. This information, together with \eqref{reilly-identity-2}, gives that $\cM\equiv 0$ on $\Ga$, and this is a contradiction, being $\Ga$ compact (e.g. $H\equiv 0$ contradicts Minkowski's identity $\int_\Ga H\,\lan x,\nu(x)\ran\,dS_x=|\Ga|$).
\par
All in all, $Q$ cannot attain its maximum value on $\Ga\times(0,T]$ and hence that value can be attained either at a critical point of $u$ or initially. 
\end{proof}

We are now in position to prove our estimate on the location of hot spots.

\begin{proof}[\bf Proof of Theorem \ref{th:hotspot}]
Set $\la=\la_1(\Om)$ and $\phi=\phi_1$ to make notations simpler.  
By choosing $\al=\la$ in Lemma 4.2, we have that either
$$
\left[ | \na u |^2 + \la\, u^2 \right] e^{2 \la t} \le \max_\Om \left[ |\na g|^2 + \la\, g^2 \right]
$$
or
$$
\left[ | \na u |^2 + \la\, u^2 \right] e^{2 \la t} \le \la\, u (\xi,\tau )^2 e^{2 \la \tau }
$$
for some critical point $(\xi,\tau)$ of $u$ in $\Om\times(0,\infty)$.
Now, Lemma \ref{lem:bound-u-eigenfunction} gives that
$$
u ( \xi,\tau) \le \phi(\xi)\, \sup_\Om \left(\frac{g}{\phi}\right) e^{-\la\tau} \le \sup_\Om\left(\frac{g}{\phi}\right) e^{-\la \tau},
$$
being $\phi$ normalized.
Hence, we infer that
\begin{equation*}
|\na u|^2 + \la\, u^2 \le K_\Om^2\, e^{- 2 \la t} \ \mbox{ in } \ \Om\times(0,\infty),
\end{equation*}
where
$$
K_\Om=\sqrt{\la_1(\Om)}\,\max\left\{\sup_{\Om} \frac{g}{\phi_1}, \max_{\ol\Om}\sqrt{g^2+\frac{|\na g|^2}{\la_1(\Om)}}\right\}.
$$
that yields:
\begin{equation*}
\frac{|\na u|}{\sqrt{ (K_\Om\, e^{ -\la t})^2 - (\sqrt{\la} u)^2 }} \le  1 \ \mbox{ in } \ \Om\times(0,\infty).
\end{equation*}

Next, as usual, take $z(t) \in \cH (t)$ and let $y(t)$ be the nearest point to $z(t)$ in $\Ga$.
Since
$$
\frac{ \sqrt{\la} \, M(t) \, e^{\la t} }{K_\Om} \le \arcsin \left( \frac{ \sqrt{\la} \, M(t) \, e^{\la t} }{K_\Om} \right) = \arcsin \left( \frac{ \sqrt{\la} \, u( z(t) , t) \, e^{\la t} }{K_\Om} \right),
$$
by setting $\xi(t)=y(t)+\te\,[z(t) -y(t)]$, we then have that 
\begin{multline*}
\frac{ \sqrt{\la} \, M(t) \, e^{\la t} }{K_\Om}\le  \int^1_0 \frac{d}{d \te} \arcsin \left( \frac{ \sqrt{\la} \, u(\xi(t), t) \, e^{\la t} }{K_\Om} \right)  d \te =
\\
\sqrt{ \la } \, \int^1_0  \frac{ \na u(\xi(t), t)\cdot [z(t) - y(t)]}{ \sqrt{ (K_\Om\, e^{-\la t})^2 -[\sqrt{\la} u(\xi(t),t)]^2}}\, d \te
\le
\\
\sqrt{\la}\, |z(t) - y(t)| = \sqrt{\la}\,d_\Ga (z(t)).
$$
\end{multline*}
Thus, \eqref{dist-heat} follows, by observing that $K_\Om\le K \sqrt{\la_1(B)}/r_\Om$, thanks to \eqref{dilation-faber-krahn}.
\end{proof}

\medskip

\begin{rem} 
\label{rem:comparison-temperature-eigenfunction}
The \textit{spectral formula} informs us that
$$
u(x,t)=\sum_{n\in\NN} \widehat{g}_n\,\psi_n(x)\,e^{-\la_n(\Om) t} \ \mbox{ in } \ L^2(\Om),
$$
where $\{\psi_n\}_{n\in\NN}$ is a basis in $L^2(\Om)$ of eigenfunctions of $-\De$ in $\Om$. Since
$\psi_1$ is proportional to $\phi_1$ and $\psi_1$ has unit norm in $L^2(\Om)$, we have that
$$
u(x,t)=\frac{\phi_1(x)\,e^{-\la_1(\Om) t}}{\nr\phi_1\nr_2^2 }\int_\Om g\,\phi_1 dy +\sum_{n=2}^\infty \widehat{g}_n\,\psi_n(x)\,e^{-\la_n(\Om) t},
$$
and hence
$$
u(x,t)=\frac{\phi_1(x)\,e^{-\la_1(\Om) t}}{\nr\phi_1\nr_2^2 }\int_\Om g\,\phi_1 dy +O(e^{-\la_2(\Om) t}) \ \mbox{ as } \ t\to\infty.
$$
It is thus interesting to compare \eqref{dist-heat} to \eqref{dist-eigenvalue} or \eqref{dist-eigenvalue-inradius}.
\par
We can do that by choosing $g=\phi_1$, that satisfies the assumptions of Theorem \ref{th:hotspot}. In this case $u(x,t)=\phi_1(x)\,e^{-\la_1(\Om)\,t}$ solves the problem \eqref{heat-problem} and the hot spots are the maximum points of $\phi_1$. We have that \eqref{dist-heat} yields that, for any maximum point $z$ of $\phi_1$,
$ d_\Ga(z)/r_\Om\ge K^{-1}$.
We also have that
$$
K\le\sqrt{\la_1(B)}\,\max\left\{1, \max_{\ol\Om}\sqrt{\phi_1^2+\frac{|\na \phi_1|^2}{\la_1(\Om)}}\right\}=\sqrt{\la_1(B)};
$$
in the last inequality we have used \eqref{max-max-eigenfunction}. Thus, we obtain the bound 
$$
\frac{ d_\Ga(z)}{r_\Om}\ge \frac1{\sqrt{\la_1(B)}}.
$$
This bound is poorer than \eqref{dist-eigenvalue-inradius}. However \eqref{dist-heat} appears to be consistent for large times.
 \end{rem}

\begin{rem}
In general, we have that
$$
M(t)\,e^{\la_1(\Om) t}\to \max_{\ol{\Om}} g \ \mbox{ as } \ t\to 0^+ 
$$
and
$$
M(t)\,e^{\la_1(\Om) t}\to \nr\phi_1\nr_2^{-2} \int_\Om g\,\phi_1 dy \ \mbox{ as } \ t\to\infty,
$$
by the spectral formula. The bound in \eqref{dist-heat} can then be computed in the limit cases, accordingly.
\end{rem}

\begin{rem}
Notice that, if $g\equiv 1$, similarly to what derived in Subsection \ref{subsec:small-diffusion}, we have that
$$
\lim_{t\to 0^+} d_\Ga(z(t))=r_\Om,
$$
for any hot spot $z(t)$. This follows from the Varadhan's formula (see \cite{Va} or \cite{BM1}):
$$
\lim_{t\to 0^+} 4t\,\log [1-u(x,t)]=- d_\Ga(x)^2, \ x\in\ol{\Om},
$$
where the convergence is uniform on $\ol{\Om}$.
\end{rem}

\medskip

\section{Quasilinear and semilinear isotropic operators}
\label{sec:nonlinear-torsions}

In this section, we shall extend our results to nonlinear settings. We will consider in more detail the situation of the \textit{torsional rigidity function} for isotropic quasilinear elliptic operators. Then we will turn to the case in which the source term is semilinear, i.e. it only depends on the function $u$. This instance also takes care of various examples of eigenfunctions for nonlinear operators. 
\par
All proofs rely on ad hoc gradient bounds. These are already present in the literature, and hence we will just recall their statements adapted to our aims and notations, rather then offering their often elaborate proofs. 

\subsection{Quasilinear isotropic setting.}
\label{subsec:quasilinear-setting}
We will work in a variational framework, that proves to be quite convenient in this case. Thus, the solutions we will consider will generally be critical points of variational integrals of type
\begin{equation}
\label{variational-Phi}
\int_\Om [\Phi(|\na v|)-F(v)]\,dy,
\end{equation}
among all the functions $v\in W^{1,p}_0(\Om)$ with $p>1$. 
\par
Our assumptions on $\Phi$ and $F$ are sufficient to fit those considered by Caffarelli, Garofalo and Segala in \cite{CGS}.  Thus, $F:\RR\to\RR$ is a non-negative primitive of $f\in C^1(\RR)$.
\par
Also, for the sake of brevity, we will only deal with \cite[Assumption (A)]{CGS} for $\Phi$. Therefore, here, $\Phi:[0,\infty)\to\RR$ is a function of class $C^1([0,\infty))\cap C^2((0,\infty))$ such that $\Phi(0)=\Phi'(0)=0$ and, if we denote by $\na_\xi\Phi$ and $\na_\xi^2\Phi$,  the gradient and Hessian matrix of the function $\RR^N\setminus\{ 0\}\ni\xi\mapsto\Phi(|\xi|)$, and by $e$ and $\cE$ the smallest and largest eigenvalues of $\na_\xi^2\Phi$, it holds that
\begin{equation}
\label{assumptions-Phi}
\begin{array}{ll}
&c\,(a+|\xi|)^{p-1}\le |\na_\xi\Phi(|\xi|)|\le C\,(a+|\xi|)^{p-1}, \\
&c\,(a+|\xi|)^{p-2}\le e(|\xi|)\le\cE(|\xi|)\le C\,(a+|\xi|)^{p-2},
\end{array}
\end{equation}
for every $\xi\ne 0$ and some constants $p>1$, $a\ge 0$, and $0<c\le C$. 
\par
For notational convenience, we set $\phi=\Phi'$, so that $\phi(0)=0$ and
$$
\Phi(\si)=\int_0^\si \phi(s)\,ds, \ \si\in [0,\infty).
$$
It is clear that $\Phi$ is strictly convex and $\phi$ is strictly increasing. Under these assumptions a relevant critical point of the functional is thus a weak solution of the problem
\begin{equation}
\label{quasilinear-semilinear-isotropic}
-\dv\left\{ \phi(|\na u|)\,\frac{\na u}{|\na u|}\right\}=f(u) \ \mbox{ in } \ \Om, \quad u=0 \ \mbox{ on } \ \Ga.
\end{equation}
Due to \cite[Theorem 1]{To},  and our assumptions on $\Phi$, if  $u \in W^{1,p}(\Om) \cap L^\infty(\Om)$ is a weak solution of \eqref{quasilinear-semilinear-isotropic}, then we have that $u \in C^{1,\ga} (\Om)$, and this regularity can be brought up to the (sufficiently smooth) boundary thanks to \cite{Li}.
\par
From convex analysis we know that, being $\Phi(\si)\ge c_1\,\si^p/p$ with $p>1$, the \textit{Young conjugate} function $\Psi$ associated to $\Phi$ is well defined by
\begin{equation}
\label{Young-conjugate}
\Psi(\tau)=\max_{\si\ge 0} [\tau\,\si-\Phi(\si)] \ \mbox{ for } \ \tau\ge 0.
\end{equation}
Thus, by definition, the \textit{Young's inequality} holds true:
$$
\Phi(\si)+\Psi(\tau)\ge \si\,\tau \ \mbox{ for any } \ \si, \tau\ge 0.
$$
If we set $\psi=\Psi'$, it turns out that $\psi$ is the inverse function of $\phi$, that is $\phi(\psi(\tau))=\tau$ and $\psi(\phi(\si))=\si$ for every $\si, \tau\ge 0$.
\par
An important case study occurs when 
\begin{equation}
\label{p-laplace-case}
\Phi(\si)=\frac{1}{p}\,\si^p  \  \mbox{ for } \ \si\in[0,\infty), \ p>1.
\end{equation}
In this instance, the Young's conjugate of $\Phi$ is simply given by $\Psi(\tau)=\tau^{p'}/p'$, where $p'$ is the conjugate exponent of $p$, that is $1/p+1/p'=1$. With this choice of $\Phi$, problem \eqref{quasilinear-semilinear-isotropic} is nothing else than 
the $p$-laplacian case:
\begin{equation}
\label{p-torsion}
-\De_p u=f(u) \ \mbox{in} \ \Om, \quad u=0 \ \mbox{ on } \ \Ga;
\end{equation}
here we (formally) denote:
$$
\De_p u =\dv\{|\na u|^{p-2} \na u\}.
$$
\par
We shall refer to this case as that of the \textit{torsional rigidity} of a long straight bar with cross section $\Om$ for an (isotropic) \textit{elasto-plastic} material.
In fact, in the physical model, the relevant material changes its properties in dependence of the parameter $p$. For values of $p$ near $2$, the material has an \textit{elastic} behavior, whereas when $p$ increases, the material gradually acquires \textit{plastic} properties. Thus, in this sense, we are working in an elasto-plastic setting.

\medskip

\subsection{Torsional rigidity in the elasto-plastic setting}
This subsection is dedicated to present the proof of Theorem \ref{th:maximum-point-torsion-quasilinear} and to detail some of its consequences. In other words, we will consider the solution of \eqref{quasilinear-torsion}, that for convenience we rewrite here with the new adopted notation:
\begin{equation}
\label{quasilinear-torsion-2}
-\dv\left\{ \phi(|\na u|)\,\frac{\na u}{|\na u|}\right\}=N \ \mbox{ in } \ \Om, \quad u=0 \ \mbox{ on } \ \Ga.
\end{equation}

In the set up described in Section \ref{subsec:quasilinear-setting}, the (weak) solution of \eqref{quasilinear-torsion-2} when $\Om$ is a ball $B_r(x)$ is easily computed as
$$
w^r(y)=\Psi(r)-\Psi(|y-x|) \ \mbox{ for } \ y\in B_r(x).
$$
We can then derive the companion to Lemma \ref{lem:relationdist}. 
\begin{lem}[A bound from below]
\label{lem:relationdist-quasilinear}
Let $\Om\subset\RR^N$, $N\ge 2$, be a bounded domain with boundary $\Ga$. Let $u\in C^0(\ol{\Om})\cap C^{1,\ga}(\Om)$, $0<\ga\le 1$, be the (weak) solution of \eqref{quasilinear-torsion-2}. Then
\begin{equation}
\label{relationdist-2}
u(x)\ge \Psi( d_\Ga(x)) \ \mbox{ for every } x\in\ol{\Om}.
\end{equation}
\end{lem}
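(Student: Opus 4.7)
The plan is to mimic exactly the strategy used for the classical torsion function in Lemma \ref{lem:relationdist}, replacing the explicit paraboloid comparator by the explicit ball solution $w^r$ that the authors have just displayed right before the lemma statement. Fix $x\in\Om$, set $r=d_\Ga(x)$, and compare $u$ with $w^r$ on the ball $B_r(x)\subset\Om$.

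First I would verify that $w^r(y)=\Psi(r)-\Psi(|y-x|)$ solves \eqref{quasilinear-torsion-2} on $B_r(x)$ with zero boundary values. Since $\Psi'=\psi=\phi^{-1}$, one computes $\na w^r(y)=-\psi(|y-x|)\,(y-x)/|y-x|$, whence $|\na w^r(y)|=\psi(|y-x|)$ and
\[
\phi(|\na w^r|)\,\frac{\na w^r}{|\na w^r|}=\phi(\psi(|y-x|))\,\left(-\frac{y-x}{|y-x|}\right)=-(y-x),
\]
using $\phi\circ\psi=\mathrm{id}$. Taking minus the divergence gives $N$, and $w^r=0$ on $\pa B_r(x)$ because $\Psi(r)-\Psi(r)=0$.

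Second, I would invoke the weak comparison principle for the quasilinear operator in \eqref{quasilinear-torsion-2}. Thanks to the strict convexity of $\Phi$ (equivalently, the structural bounds \eqref{assumptions-Phi} that make the operator strictly monotone), the standard weak comparison principle applies on $B_r(x)$: since both $u$ and $w^r$ are weak solutions of the same equation in $B_r(x)$ and satisfy $u\ge 0=w^r$ on $\pa B_r(x)$ (indeed $u\ge 0$ on all of $\ol{\Om}$ by the strong maximum principle, and $w^r$ vanishes on $\pa B_r(x)$), we conclude that $u\ge w^r$ on $\ol{B_r(x)}$. Evaluating at the center and using $\Psi(0)=0$ (which follows from $\Phi(0)=0$ and $\Phi\ge 0$ via the definition \eqref{Young-conjugate}) gives
\[
u(x)\ge w^r(x)=\Psi(r)-\Psi(0)=\Psi(d_\Ga(x)),
\]
which is the desired bound \eqref{relationdist-2}. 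The inequality extends to $x\in\ol{\Om}$ by continuity of $u$ and $\Psi$.

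The only slightly non-routine point is the weak comparison principle in this quasilinear setting; this is where I would need to cite, or sketch from, a standard reference (the operator is monotone in the gradient variable because $\na_\xi^2\Phi$ is positive definite away from the origin, by the lower bound on $e(|\xi|)$ in \eqref{assumptions-Phi}, so testing the difference of the two equations with $(w^r-u)_+\in W^{1,p}_0(B_r(x))$ and using monotonicity forces $(w^r-u)_+\equiv 0$). Apart from that, the argument is the exact quasilinear analogue of Lemma \ref{lem:relationdist}.
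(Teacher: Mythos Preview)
Your proof is correct and follows exactly the paper's approach: fix $x\in\Om$, set $r=d_\Ga(x)$, compare $u$ with the explicit radial solution $w^r$ on $B_r(x)$ via the comparison principle, and evaluate at the center. In fact, you supply more detail than the paper (verifying that $w^r$ solves the equation and sketching the monotonicity argument behind the weak comparison principle), but the strategy is identical.
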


\begin{proof}
We proceed as usual: for $x\in\Om$ we let $r= d_\Ga(x)$ and consider the ball $B=B_r(x)$. Thus, we obtain the comparison $u\ge w^r$ on $\ol{B_r(x)}$, and hence at $x$ we get:
$$
u(x)\ge w^r(x)=\Psi(r)-\Psi(0)=\Psi( d_\Ga(x)),
$$
as claimed.
\end{proof}

We now need a counterpart of Lemma \ref{lem:bound-gradient}. To avoid further difficulties, we shall limit our discussion to the case in which $\Ga$ is a mean convex surface. We will thus adapt \cite{PP1} to our framework. The statement of Lemma \ref{lem:bound-gradient-quasilinear} here below is slightly different from those contained in \cite{PP1} or \cite{CGS}, because we chose to present the relevant $P$-function in terms of the Young conjugate. We think that the ensuing estimate becomes more instructive.

\begin{lem}[Gradient estimate]
\label{lem:bound-gradient-quasilinear-torsion}
Let $\Om\subset\RR^N$, $N\ge 2$, be a bounded domain with mean convex boundary $\Ga$.
Let $u \in W^{1,p}(\Om) \cap L^\infty(\Om)$ be the weak solution of \eqref{quasilinear-torsion-2}.
Then the function defined by
$$
P=\Psi(\phi(|\na u|))+N\,\Bigl[u-\max_{\ol{\Om}} u\Bigr] \ \mbox{ on } \ \ol{\Om}
$$ 
attains its maximum at some critical point of $u$, and hence it holds that
$$
\Psi(\phi(|\na u|))\le N\,\Bigl[\max_{\ol{\Om}} u-u\Bigr] \ \mbox{ on } \ \ol{\Om}.
$$
\end{lem}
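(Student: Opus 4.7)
The plan is to mirror Lemma~\ref{lem:bound-gradient} step by step, interpreting the classical choice $P=\tfrac12|\na u|^2+N(u-\max u)$ as $P=Q(|\na u|)+N(u-\max u)$ with $Q:=\Psi\circ\phi$. This choice is natural because the Young duality $\psi=\phi^{-1}$ gives $Q'(\sigma)=\sigma\,\phi'(\sigma)$, and in the Laplace case $\Phi(\sigma)=\sigma^2/2$ one recovers exactly $Q(\sigma)=\sigma^2/2$. By the approximation argument of part (ii) of Lemma~\ref{lem:bound-gradient}, together with the interior regularity \cite{To} and boundary regularity \cite{Li}, one may assume $\Ga\in C^{2,\ga}$; then $u$ is $C^2$ on the open set $\{\na u\neq 0\}$ and $P\in C^0(\ol\Om)$.

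Using $\na|\na u|=\na^2u\,\na u/|\na u|$, one computes
\[
\na P=\phi'(|\na u|)\,\na^2 u\,\na u+N\,\na u.
\]
On $\Ga$ one has $\nu=\na u/|\na u|$, so $P_\nu=\phi'(|\na u|)|\na u|\,u_{\nu\nu}+N|\na u|$. The analog of \eqref{reilly-identity} for the operator in \eqref{quasilinear-torsion-2} is obtained by decomposing $a_{ij}(\na u)u_{ij}$ (with $a_{ij}(\xi)=\partial_{\xi_i\xi_j}^2\Phi(|\xi|)$) into its normal and tangential parts at a boundary point and using $u\equiv 0$ on $\Ga$, yielding
\[
\phi'(|\na u|)\,u_{\nu\nu}-(N-1)\,\cM\,\phi(|\na u|)=-N \ \mbox{ on } \ \Ga.
\]
Substituting this into the expression for $P_\nu$ gives $P_\nu=(N-1)\,\cM\,|\na u|\,\phi(|\na u|)\ge 0$, by mean convexity.

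For the interior, I would follow the $P$-function technique of \cite{PP1, CGS}: compute the action of the linearized operator $\cL v:=a_{ij}(\na u)v_{ij}+b\cdot\na v$ on $P$, where $\cL$ is uniformly elliptic on compact subsets of $\{\na u\neq 0\}$ thanks to \eqref{assumptions-Phi}. The target is an inequality of the form $\cL P\ge 0$ on $\{\na u\neq 0\}$. The key algebraic input is the Cauchy--Schwarz bound $|\na^2 u\,\na u|^2\le|\na u|^2|\na^2 u|^2$, exactly as in the derivation of \eqref{P-identity-1}, and the coefficient $N$ in front of $(u-\max u)$ is precisely what cancels the residual terms produced by differentiating the equation. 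Once this is established, the strong maximum principle combined with the Hopf boundary lemma (using $P_\nu\ge 0$ and the fact that $|\na u|>0$ on $\Ga$) forces the maximum of $P$ to be attained at an interior critical point $z$ of $u$. At such a point $\na u=0$, so $P(z)=Q(0)+N(u(z)-\max u)\le 0$, and the claimed pointwise bound follows.

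The hard part is the third step: verifying the interior inequality $\cL P\ge 0$. Unlike the Laplace case, where the identity~\eqref{P-identity-1} is obtained by a few lines of calculation, here the presence of $\phi'$ and $\phi''$ in both the equation and in $\na P$ forces delicate bookkeeping, and it is exactly the choice $Q=\Psi\circ\phi$ that makes the residual terms collapse into a nonnegative combination controlled by Cauchy--Schwarz. A secondary issue is the degeneracy of the equation on the critical set of $u$, but this is handled as usual by working on $\{\na u\neq 0\}$ and then extending $P\le 0$ to $\ol\Om$ by continuity.
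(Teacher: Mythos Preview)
Your proposal is correct and follows essentially the same route as the paper: same $P$-function, same boundary computation via the analog of \eqref{reilly-identity} (the paper obtains exactly your formula $P_\nu=(N-1)\,\cM\,|\na u|\,\phi(|\na u|)$ after choosing $\be=N$), and the same appeal to the $P$-function calculations of \cite{PP1,CGS} for the interior inequality rather than spelling them out.

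One small inaccuracy worth flagging: the interior inequality is \emph{not} of the form $\cL P\ge 0$ for a linear operator $\cL v=a_{ij}(\na u)v_{ij}+b\cdot\na v$. As in the Laplacian case \eqref{P-identity-1}, the correct inequality is genuinely nonlinear in $\na P$; the paper records it as
\[
\tr[A(|\na u|)\,\na^2 P]-a(|\na u|)\,|\na P|^2+b(|\na u|)\,(\na u\cdot\na P)^2+c(|\na u|)\,\na u\cdot\na P\ge 0,
\]
with explicit coefficients involving the ellipticity ratio $\ep(\si)=\si\phi'(\si)/\phi(\si)$. This does not affect the argument (the strong maximum principle still applies to such inequalities), but you should not expect a linear $\cL$.
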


\begin{proof}
As already observed, by \cite{To} and \cite{Li}, we know that $u \in C^{1,\ga}(\ol{\Om})$ for some $\ga\in(0,1]$.
Moreover, since we assume that $\Ga$ is sufficiently regular (up to an approximation argument), the strong comparison principle  (see \cite{CT}) together with a standard barrier argument ensure that $|\na u|$ is strictly positive on $\Ga$. Thus, $u$ gains sufficient extra regularity in a neighborhood of $\Ga$, since it solves a uniformly elliptic equation.

First set
$$
P=\Psi(\phi(|\na u|))+\be\,\Bigl[u-\max_{\ol{\Om}} u\Bigr] ,
$$
where $\be\in\RR$ is to be determined. Notice that 
$$
\frac{d}{d\si}\Psi(\phi(\si))=\si\,\phi'(\si).
$$
Once this remark is done, the proof is obtained by an adaptation of the calculations in \cite{PP1} to our framework. Indeed, we have that
\begin{multline*}
\tr[A(|\na u|)\,\na^2 P]-a(|\na u|)\,|\na P|^2+\\
b(|\na u|)\,(\na u\cdot\na P)^2+c(|\na u|)\,\na u\cdot\na P\ge N^2 \be\,(\be/N-1)\,\frac{|\na u|\,\ep(|\na u|)}{\phi(|\na u|)}
\end{multline*}
and
$$
P_\nu=|\na u|\,\left\{\phi'(|\na u|)\,\frac{\lan \na^2 u\,\na u, \na u\ran}{|\na u|^2}+\be \right\}  \ \mbox{ on } \ \Ga.
$$
In the first inequality we have set:
\begin{equation}
\label{ellipticity-Phi}
\epsilon(\si)=\frac{e(\si)}{\cE(\si)}=\frac{\si\,\phi'(\si)}{\phi(\si)} \ \mbox{ for } \ \si\in[0,\infty)
\end{equation}
and
\begin{eqnarray*}
&\displaystyle  A(\xi)=I+\frac{\ep(|\xi|)-1}{|\xi|^2}\,\xi\otimes\xi, \quad &c(\si)=N\,\frac{(2\be/N-1)\,\ep(\si)+1}{\si\,\phi(\si)}, \\
&\displaystyle a(\si)=\frac{\si\,\ep'(\si)+\ep(\si)}{\si\,\phi(\si)\,\ep(\si)^2}, \quad   &b(\si)=\frac{\si\,\ep'(\si)+\ep(\si)\,[1-\ep(\si)^2]}{2\,\si^3 \phi(\si)\,\ep(\si)^2}.
\end{eqnarray*}

Next, since 
$$
u_{\nu\nu}=\frac{\lan \na^2 u\,\na u, \na u\ran}{|\na u|^2} \ \mbox{ on } \ \Ga,
$$
by the identity \eqref{reilly-identity} and \eqref{quasilinear-torsion-2}, we obtain that
$$
\phi'(|\na u|)\,\frac{\lan \na^2 u\,\na u, \na u\ran}{|\na u|^2}=-N+(N-1)\,\phi(|\na u|)\,\cM \ \mbox{ on } \ \Ga,
$$
so that
$$
P_\nu=|\na u|\,\left\{\be-N+(N-1)\,\phi(|\na u|)\,\cM\right\} \ \mbox{ on } \ \Ga.
$$
\par
Now, since $\cM\ge 0$, if we choose $\be\ge N$
we have that
\begin{multline*}
\tr[A(|\na u|)\,\na^2 P]-a(|\na u|)\,|\na P|^2+
b(|\na u|)\,(\na u\cdot\na P)^2+ 
c(|\na u|)\,\na u\cdot\na P\ge 0,
\end{multline*}
away from the critical points of $u$ in $\Om$, and $P_\nu\ge 0$ on $\Ga$.
By the strong maximum principle and Hopf boundary lemma, the last two inequalities give that the maximum of $P$ must be attained at a critical point of $u$, at which $P\le 0$. Thus, $P\le 0$ on $\ol{\Om}$ for any $\be\ge N$, and hence our claim follows by choosing $\be=N$. 
\end{proof}

\medskip

\begin{proof}[\bf Proof of Theorem \ref{th:maximum-point-torsion-quasilinear}]
We can apply Lemma \ref{lem:bound-gradient-quasilinear-torsion} and, since $M=u(z)$, obtain that
$$
|\na u|\le \psi(\Psi^{-1}(N\,[u(z)-u])) \ \mbox{ on } \ \ol{\Om}.
$$
Set
\begin{equation}
\label{def-chi}
\chi(\si)=\int_0^\si \frac{ds}{\psi(\Psi^{-1}(N\,s))}.
\end{equation}
We take as before $x\in\Om$ and $y\in\Ga$ such that $|x-y|=\dist(x,\Ga)$  
and compute:
\begin{multline*}
\chi(u(z))-\chi(u(z)-u(x))=\int_0^1 \frac{d}{dt} \chi(u(z)-u(x+t\,(y-x))\,dt= \\
\int_0^1 \frac{\na [u(z)- u(x+t\,(y-x))]\cdot (y-x)}{\psi(\Psi^{-1} (N\,[u(z)- u(x+t\,(y-x))]))}\,dt \le
|x-y|= d_\Ga(x).
\end{multline*}
Thus, choosing $x=z$ gives that 
$$
\chi(u(z)) \le  d_\Ga(z).
$$
\par
Now notice that, by the change of variables $N s=\Psi(t)$, we have that
$$
\chi(\si)=\frac1{N}\,\Psi^{-1}(N\,\si).
$$
Therefore, we pick an incenter of $\Om$ and by Lemma \ref{lem:relationdist-quasilinear} obtain that
$\Psi(r_\Om)\le u(x_\Om)\le u(z)$, and hence
$$
\frac1{N}\,\Psi^{-1}(N\,\Psi(r_\Om))\le \Phi(u(z))\le d_\Ga(z).
$$
Our claim is proved.
\end{proof}

When the number $a$ in \eqref{assumptions-Phi} is zero, the right-hand side of \eqref{dist-torsion-quasilinear} can be bounded from below by a quantity that only depends on $N$ and the constants $c$ and $C$. In the following corollary, we will carry out this case. For completeness, in Remark \ref{rem:general-elasto-plastic-bound} below, we shall briefly sketch how to obtain a similar estimate, which however is not independent on $r_\Om$, when $a>0$.

\begin{cor}
\label{cor:bound-elasto-plastic}
Set $1<p<\infty$. Under the assumptions of Theorem \ref{th:maximum-point-torsion-quasilinear}, if \eqref{assumptions-Phi}  holds with $a=0$, then we have that
$$
\frac{d_\Ga(z)}{r_\Om}\ge \left(\frac{c}{N C}\right)^{1/p}.
$$
In particular, in the case of the $p$-laplacian, it holds that
$$
\frac{d_\Ga(z)}{r_\Om}\ge \frac1{N^{1/p}}.
$$
\end{cor}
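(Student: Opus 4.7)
The plan is to bootstrap from Theorem \ref{th:maximum-point-torsion-quasilinear}, which already yields
$$
d_\Ga(z)\ge \frac{1}{N}\,\Psi^{-1}(N\,\Psi(r_\Om)),
$$
and to turn this into an explicit bound in terms of $N$, $p$, $c$, $C$ alone by comparing $\Psi$ with pure power functions, using crucially that $a=0$.

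First I would translate the hypothesis \eqref{assumptions-Phi} on $\na_\xi\Phi$ into a pointwise bound on $\phi=\Phi'$: since $\na_\xi\Phi(|\xi|)=\phi(|\xi|)\,\xi/|\xi|$, the assumption with $a=0$ reads $c\,\si^{p-1}\le \phi(\si)\le C\,\si^{p-1}$ for $\si\ge 0$. Integrating from $0$ to $\si$ (and using $\Phi(0)=0$) gives
$$
\frac{c}{p}\,\si^p\le \Phi(\si)\le \frac{C}{p}\,\si^p \quad\text{for } \si\ge 0.
$$

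Next I would pass to the Young conjugate. Since the Legendre transform is order-reversing, and since the conjugate of $k\,\si^p/p$ is $k^{1-p'}\,\tau^{p'}/p'$ (with $p'=p/(p-1)$), these inequalities yield
$$
\frac{C^{1-p'}}{p'}\,\tau^{p'}\le \Psi(\tau)\le \frac{c^{1-p'}}{p'}\,\tau^{p'}.
$$
Using the lower bound with $\tau=r_\Om$ gives $N\,\Psi(r_\Om)\ge (N/p')\,C^{1-p'}\,r_\Om^{p'}$, while the upper bound with $\tau=\Psi^{-1}(N\,\Psi(r_\Om))$, together with monotonicity of $\Psi^{-1}$, delivers
$$
\Psi^{-1}(N\,\Psi(r_\Om))\ge \bigl(p'\,c^{p'-1}\,N\,\Psi(r_\Om)\bigr)^{1/p'}\ge \bigl(N\,(c/C)^{p'-1}\bigr)^{1/p'}\,r_\Om.
$$
Since $(p'-1)/p'=1/p$, the exponent simplifies and one obtains $\Psi^{-1}(N\,\Psi(r_\Om))\ge N^{1/p'}(c/C)^{1/p}\,r_\Om$.

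Plugging this into the estimate of Theorem \ref{th:maximum-point-torsion-quasilinear} yields
$$
\frac{d_\Ga(z)}{r_\Om}\ge \frac{1}{N}\,N^{1/p'}(c/C)^{1/p}=N^{-1/p}(c/C)^{1/p}=\left(\frac{c}{N\,C}\right)^{1/p},
$$
which is the stated inequality. The $p$-Laplace case \eqref{p-laplace-case} corresponds to $c=C=1$, so the bound collapses to $d_\Ga(z)/r_\Om\ge N^{-1/p}$, recovering the estimate announced after Theorem \ref{th:maximum-point-torsion-quasilinear}. There is no real obstacle here beyond bookkeeping with the conjugate exponent $p'$; the only place where care is needed is to keep the direction of the inequalities straight when applying the order-reversing Legendre transform.
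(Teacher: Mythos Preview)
Your proof is correct and follows essentially the same route as the paper: derive the two-sided power bound $\frac{C^{1-p'}}{p'}\tau^{p'}\le\Psi(\tau)\le\frac{c^{1-p'}}{p'}\tau^{p'}$ from \eqref{assumptions-Phi} with $a=0$, then feed it into the conclusion of Theorem~\ref{th:maximum-point-torsion-quasilinear}. The only cosmetic difference is that the paper rewrites \eqref{dist-torsion-quasilinear} as $N\,\Psi(r_\Om)\le\Psi(N\,d_\Ga(z))$ and bounds both sides directly, whereas you invert $\Psi$ explicitly; the arithmetic is identical.
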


\begin{proof}
For notational convenience, we set $r=r_\Om$ and $d=d_\Ga(z)$. From \eqref{Young-conjugate} and \eqref{assumptions-Phi}, we have that
$$
\frac{C^{1-p'}}{p'}\,\tau^{p'}\le \Psi(\tau)\le \frac{c^{1-p'}}{p'}\,\tau^{p'} \ \mbox{ for } \ \tau\ge 0.
$$
Thus, \eqref{dist-torsion-quasilinear} gives that
$$
\frac{N C^{1-p'}}{p'}\,r^{p'}\le N\,\Psi(r)\le\Psi(N\,d)\le 
\frac{c^{1-p'} N^{p'}}{p'}\,d^{p'},
$$
that yields our claim. In the case of the $p$-laplacian we have that $c=C$.
 \end{proof}

\begin{rem}
\label{rem:general-elasto-plastic-bound}
When $a>0$, similar calculations give the inequality:
\begin{multline*}
\bigl[N C^{1-p'} r^{p'}-N p' a\,r+N (p'-1)\,C\, a^{p'}\bigr]^+\le N\,\Psi(r)\le\Psi(N\,d)\le \\
\bigl[N^{p'} c^{1-p'} d^{p'}-N p' a\,d+N (p'-1)\,c\, a^{p'}\bigr]^+.
\end{multline*}
 Thus, we can conclude that 
$$
\frac{d_\Ga(z)}{r_\Om}\ge \mu_{N,p}(r_\Om, a, c, C),
$$
for some $\mu_{N,p}(r_\Om, a, c, C)\in [0,1)$.
\end{rem}

\begin{rem}
If $z_p$ is a maximum point for the solution of \eqref{p-torsion} with $f\equiv N$, then, modulo a subsequence, we have that there exists a point $z$ such that
$$
r_\Om\ge d_\Ga(z)=\lim_{p\to\infty}\dist(z_p,\Ga)\ge\lim_{p\to\infty}\frac{1}{N^{1/p}}\,r_\Om=r_\Om.
$$
Thus, $z_p$ converges to an incenter $x_\Om$ as $p\to\infty$.
\end{rem}

\begin{rem}
It is not difficult to obtain an analog of Corollary \ref{cor:domain-curvature} in the elasto-plastic setting by analysing the proofs of Lemma \ref{lem:bound-gradient-quasilinear-torsion} and Theorem \ref{th:maximum-point-torsion-quasilinear}.
\end{rem}

\medskip

\subsection{The quasilinear-semilinear isotropic case}
One can obtain a gradient estimate of the type of Lemma \ref{lem:gradient-estimate-semilinear} for the general quasilinear operators considered in this section. The useful reference is now \cite[Theorem 1.6]{CGS}. In other words, we can consider a positive solution (if any) of the problem 
\begin{equation}
\label{quasilinear-semilinear}
-\dv\left\{ \phi(|\na u|)\,\frac{\na u}{|\na u|}\right\}=f(u) \ \mbox{ in } \ \Om, \quad u=0 \ \mbox{ on } \ \Ga,
\end{equation}
where $f$ is a non-linearity of class $C^1(\RR)$. The following lemma adapts \cite[Theorem 1.6]{CGS} to our aims and notations.

 \begin{lem}[Gradient estimate]
\label{lem:bound-gradient-quasilinear}
Let $\Om\subset\RR^N$, $N\ge 2$, be a bounded domain with mean convex boundary $\Ga$.
Let $u$ be a weak solution of \eqref{quasilinear-semilinear} and suppose that $u$ is of class $C^{1,\ga}(\ol{\Om})$ for some $\ga\in(0,1]$ and $C^2$ near $\Ga$.
Then the function defined by
$$
P=\Psi(\phi(|\na u|))-\int_u^M f(\si)\,d\si \ \mbox{ on } \ \ol{\Om}
$$ 
attains its maximum at some critical point of $u$, and hence it holds that
$$
\Psi(\phi(|\na u|))\le\int_u^M f(\si)\,d\si \ \mbox{ on } \ \ol{\Om}.
$$
\end{lem}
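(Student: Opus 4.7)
The plan is to combine the proof schemes of Lemma \ref{lem:gradient-estimate-semilinear} (semilinear $P$-function with an integral correction) and Lemma \ref{lem:bound-gradient-quasilinear-torsion} (quasilinear $P$-function written in terms of the Young conjugate). By the usual approximation argument one reduces to the case where $\Ga$ is of class $C^{2,\ga}$; the strong comparison principle of \cite{CT} combined with a standard barrier argument gives $|\na u|>0$ in a neighborhood of $\Ga$, so the equation is uniformly elliptic there and $u$ enjoys full $C^2$-regularity up to the boundary. Away from critical points of $u$ in $\Om$, the same uniform ellipticity allows one to differentiate $P$ twice classically.

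A first key computation, based on the identity $\frac{d}{d\si}\Psi(\phi(\si))=\si\,\phi'(\si)$, gives
\begin{equation*}
\na P=\phi'(|\na u|)\,\na^2 u\,\na u+f(u)\,\na u \ \mbox{ in } \ \Om.
\end{equation*}
Following the calculations in the proof of Lemma \ref{lem:bound-gradient-quasilinear-torsion} (with the constant $N$ on the right-hand side replaced by $f(u)$, its derivative $f'(u)\,\na u$ producing only a harmless first-order contribution), one arrives at a degenerate-elliptic differential inequality for $P$ in $\Om\cap\{\na u\ne 0\}$ of the form
\begin{equation*}
\tr[A(|\na u|)\,\na^2 P]-a(|\na u|)\,|\na P|^2+b(|\na u|)\,(\na u\cdot\na P)^2+\hat c(|\na u|,u)\,\na u\cdot\na P\ge 0,
\end{equation*}
with $A$, $a$, $b$ as in the proof of Lemma \ref{lem:bound-gradient-quasilinear-torsion} and $\hat c$ a continuous coefficient. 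The driving algebraic input is again $|\na^2 u\,\na u|^2\le|\na u|^2|\na^2 u|^2$. This is essentially the content of \cite[Theorem 1.6]{CGS}, to which I would refer for the fully detailed computation.

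For the boundary analysis, use that on $\Ga$ one has $u\equiv 0$ and $\nu=\na u/|\na u|$ to get
\begin{equation*}
P_\nu=\phi'(|\na u|)\,|\na u|\,u_{\nu\nu}+f(0)\,|\na u| \ \mbox{ on } \ \Ga.
\end{equation*}
Expanding the quasilinear operator along $\Ga$ and invoking the identity \eqref{reilly-identity}, the equation in \eqref{quasilinear-semilinear} evaluated on $\Ga$ yields
\begin{equation*}
-\phi'(|\na u|)\,u_{\nu\nu}+(N-1)\,\cM\,\phi(|\na u|)=f(0) \ \mbox{ on } \ \Ga,
\end{equation*}
whence $P_\nu=(N-1)\,\cM\,\phi(|\na u|)\,|\na u|\ge 0$ by mean convexity of $\Ga$.

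Finally, the strong maximum principle together with Hopf's boundary lemma rule out the maximum of $P$ being attained on $\Ga$ or at any interior point where $\na u\ne 0$; hence it is attained at some critical point $x_0$ of $u$. At such a point $|\na u(x_0)|=0$, so $\Psi(\phi(0))=0$ and $P(x_0)=-\int_{u(x_0)}^M f(\si)\,d\si\le 0$ (the natural sign condition on $f$ being in force, as in Lemma \ref{lem:gradient-estimate-semilinear}), and consequently $P\le 0$ on $\ol{\Om}$. The main obstacle is the interior differential inequality: showing that $P$ is a subsolution of a suitable degenerate-elliptic operator despite the $u$-dependent source, which is precisely the delicate bookkeeping handled in \cite{CGS}; everything else is a direct merge of the arguments of Lemmas \ref{lem:gradient-estimate-semilinear} and \ref{lem:bound-gradient-quasilinear-torsion}.
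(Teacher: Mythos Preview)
Your proposal is correct and follows exactly the approach the paper intends: in fact, the paper does not supply a proof of this lemma at all, merely stating that it ``adapts \cite[Theorem~1.6]{CGS}'' to the present notation. Your sketch faithfully merges the boundary analysis of Lemma~\ref{lem:bound-gradient-quasilinear-torsion} (yielding $P_\nu=(N-1)\,\cM\,\phi(|\na u|)\,|\na u|\ge 0$) with the interior differential inequality for the $P$-function from \cite{CGS}, which is precisely what the authors have in mind.
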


\begin{rem}
\label{rem:quasilinear-semilinear-ineq}
Based on this lemma and thanks to the now usual arguments (see Theorem \ref{th:maximum-point-small-diffusion}), for any maximum point $z\in\Om$ we obtain the inequalities:
\begin{equation}
\label{quasilinear-semilinear-gradient-ineq}
r_\Om\ge  d_\Ga(z)\ge\int_0^{u(z)} \frac{d\si}{\zeta\left(\int^{u(z)}_{u(z)-\si} f(s)\,ds\right)} \ \mbox{ with } \ \zeta= \psi\circ\Psi^{-1}.
\end{equation}
\par
If the operator in \eqref{quasilinear-semilinear} satisfies a comparison principle and the function of $u(z)$ at the right-hand side of \eqref{quasilinear-semilinear-gradient-ineq} is monotone increasing, we may obtain a bound from below for 
$ d_\Ga(z)$ in terms of $r_\Om$. To this aim, for any fixed $x\in\Om$ we must compare $u$ to the radially symmetric positive solution (if any) $w^r(y-x)$ of \eqref{quasilinear-semilinear} in the ball $B_{r}(x)$, with $r= d_\Ga(x)$. For the records, $w(\tau)=w^r(\tau)$ must satisfy the ODE problem:
$$
-[\tau^{N-1} \phi(w')]'=\tau^{N-1} f(w) \ \mbox{ in } \ (0,r), \quad w(r)=0, \ w'(0)=0.
$$
\par
Therefore, after simple manipulations eventually we get:
$$
 d_\Ga(z)\ge \int_0^1 \frac{w^{r_\Om}(0)\,d\si}{\zeta\Bigl(w^{r_\Om}(0)\,\int_{1-\si}^1 f(w^{r_\Om}(0)\,s)\,ds\Bigr)}.
$$
 \end{rem}

\begin{rem}
An interesting case in which a comparison principle does not hold occurs if we choose
$$
\Phi(\si)=\frac{\si^p}{p} \ \mbox{ and } \ f(s)=\la |s|^{p-2} s.
$$
This choice corresponds to the problem:
$$
 -\De_p u=\la\, |u|^{p-2} u \ \mbox{ and } \ u>0 \ \mbox{ in } \ \Om, \quad u=0 \ \mbox{ on } \ \Ga.
$$
The eigenvalue $\la=\la_{1,p}(\Om)$ is the sharp constant in the \textit{Sobolev-Poincar\'e inequality}: 
$$
\la_{1,p}(\Om)\,\int_\Om |u|^p dx\le \int_\Om |\na u|^p dx \ \mbox{ for every } \ u\in W^{1,p}_0(\Om).
$$
\par
Thanks to the formula \eqref{quasilinear-semilinear-gradient-ineq}, we thus get:
$$
 d_\Ga(z)\ge \frac{2}{p} \left[\frac{p-1}{\la_{1,p}(\Om)}\right]^{1/p} \int_0^{\pi/2} (\tan\te)^{2/p-1}  d\te=\frac{1}{p} \left[\frac{p-1}{\la_{1,p}(\Om)}\right]^{1/p} \be(1/p,1/p'),
$$
where $\be$ is Euler's beta function.
The definition of $\la_{1,p}(\Om)$ and its scaling property give that
$$
\la_{1,p}(\Om)\le\la_{1,p}(B_{r_\Om})\le \frac{\la_{1,p}(B)}{r_\Om^p}.
$$
Therefore, by using Euler's gamma function, we conclude that
$$
\frac{ d_\Ga(z)}{r_\Om}\ge \frac1{p} \left[\frac{p-1}{\la_{1,p}(B)}\right]^{1/p} \Ga(1/p) \Ga(1/p').
$$
The constant at the right-hand side only depends on $N$ and $p$.
\end{rem}

\bigskip

\section{Anisotropic case: Wulff-type functionals}
\label{sec:Wulff}

Our analysis can be extended to a class of anisotropic problems. However, the proof of the corresponding Theorem \ref{th:maximum-point-torsion-quasilinear} needs some more detail. To avoid unnecessary complications, we shall present it for
the minimizer $u$ of the \textit{Wulff-type} functional
\begin{equation}
\label{Wulff-functional}
\int_\Om [\Phi(H(\na v))-N\,v]\,dx,
\end{equation}
among all the functions $v\in W^{1,p}_0(\Om)$ with $p>1$. Here, $H:\RR^N\to[0,\infty)$ is a suitable norm (see Section \ref{subsec:properties-H} for some definitions and relevant properties of $H$). 
\par
Whenever convenient, we will adopt the notation $\Phi_H=\Phi\circ H$ for short. 
The assumptions for $\Phi$ are those stated in Section \ref{subsec:quasilinear-setting}. In particular, we require that $\Phi_H$ satisfies \eqref{assumptions-Phi}.
\par
The strict convexity of the functional in \eqref{Wulff-functional} makes sure that a minimizer $u$ exists and is unique, and also satisfies the Dirichlet problem
\begin{equation}
\label{anisotropic-torsion-general-Phi}
-\dv \{\na\Phi_H(\na u)\}=N \ \mbox{ in } \ \Om, \quad u=0 \ \mbox{ on } \ \Ga,
\end{equation}
or, more explicitly, the problem
\begin{equation}
\label{anisotropicquasilinear-torsion-2}
-\dv\left\{ \phi( H( \na u ))\, \na_{ \xi} H( \na u) \right\}=N \ \mbox{ in } \ \Om, \quad u=0 \ \mbox{ on } \ \Ga,
\end{equation}
in the weak sense. 
\par
Before stating and proving the main results of this section, we need to recall some definitions, notations and relevant properties related to the norm $H$.

\subsection{Anisotropic norm, ball, curvature and distance}
\label{subsec:properties-H}

We assume that $H:\RR^N\to[0,\infty)$ is a norm on $\RR^N$, that is, it holds that
\begin{enumerate}[(i)]
\item $H( \xi) \ge 0$ for $\xi \in \RR^N$ and $H(\xi) = 0$ if and only if $\xi = 0$;
\item $H(t\,\xi) = |t|\,H(\xi)$ for $\xi \in \RR^N$ and $t \in \RR$;
\item $H$ satisfies the triangle inequality.
\end{enumerate}
Associated to $H$, we consider the dual norm on $\RR^N$ defined by the \textit{polar function}:
\begin{equation}
\label{def-H0}
H^o(\eta) = \sup_{\xi \neq 0 } \frac{\lan \xi, \eta\ran}{H( \xi )}, \ \eta \in \RR^N,
\end{equation} 
where the brackets denote the 
scalar product in $\RR^N$. Also, we have that
\begin{equation*}
H(\xi) = \sup_{\xi \neq 0 } \frac{\lan \eta, \xi\ran}{H^o(\eta)} \quad \text{for} \quad \xi \in \RR^N .
\end{equation*}
Thanks to \eqref{def-H0}, it holds that
\begin{equation}
\label{anisotropic-holder}
|\lan \xi , \eta\ran| \le H(\xi)\, H^o (\eta) \, , \quad \text{for any } \, \xi , \, \eta \in \RR^N .
\end{equation}
\par 
For convenience in this section, we shall drop the dependence on the norm $H$ in the relevant notation. For instance, we will simply denote by $B$ and $B^0$ the unit balls in the norms $H$ and $H^o$, that is we set
$$
B= \{ \xi \in \RR^N : H( \xi ) < 1 \} \ \mbox{ and } \ B^o= \{ \eta \in \RR^N : H^o(\eta) < 1 \} .
$$ 
Notice that $H$ and $H^o$ are nothing else than the support functions of $B^o$ and $B$, respectively (see \cite{CM} and \cite[Section 1.7]{Sc}). By the homogeneity of the norms $H$ and $H^o$, we can define the corresponding balls centered at a point $x\in\RR^N$ and with radius $r>0$:
\begin{eqnarray*}
&&B_r(x) = \{\xi \in \RR^N : H (\xi -x) < r \}=x+r\,B, \\
&&B^o_r(x) = \{\eta \in \RR^N : H (\eta -x) < r \}=x+r\,B^o.
\end{eqnarray*}
The sets $B_r^o (\eta)$ or $B_r(\xi)$ are also named {\it Wulff shapes} of $H$ or $H^o$.
\par
When $H \in C^1( \RR^ N \setminus \{ 0 \})$, the homogeneity property (ii) of the norm $H$ is equivalent to the so-called \textit{Euler's identity}:
\begin{equation}
\label{eq:proprietacheserveinconti}
\lan\na_{\xi} H (\xi ), \xi\ran = H(\xi) \, , \quad \text{for any } \, \xi \in \RR^N ,
\end{equation}
where the left-hand side is taken to be $0$ when $\xi = 0$. By the same homogeneity, we have that
\begin{equation}
\label{0-homogeneous}
\na_\xi H(t\,\xi)=\sgn(t)\,\na_\xi H(\xi) \ \mbox{ for  $\xi\ne 0$ and $t\ne 0$}.
\end{equation}
Later on we will also use the following properties (see \cite[Section 3.1]{CS} or \cite{BP, Xi}).
The identity
\begin{equation}\label{H-grad-Ho-identity}
H(\na_\eta H^o(\eta))=1 \ \mbox{ for } \ \eta\ne 0  
\end{equation}
holds true. Moreover, the map $H \, \na_\xi H$ is invertible and it holds that
$$
H \, \na_\xi H = \left(  H^0 \, \na_\eta H^0 \right)^{-1}.
$$
By \eqref{H-grad-Ho-identity}, \eqref{0-homogeneous}, and the homogeneity of $H$, the last formula is equivalent to
\begin{equation}\label{H-nuovaproprietaaggiuntaperconto}
H^0(\eta) \na_\xi H \left( \na_\eta H^0 \left( \eta \right) \right) = \eta . 
\end{equation}
\par
If we denote as usual by $\nu (x)$ the normal unit vector at a point $x\in\Ga$ pointing inward to $\Om$,
the corresponding \textit{anisotropic inner normal} $\nu_a(x)$ to $\Om$ is then defined by
$$
\nu_a (x)= \na_{\xi } H (\nu(x)) 
$$
If the solution $u$ of \eqref{anisotropic-torsion-general-Phi} is of class $C^1(\ol{\Om})$, being $\nu(x)=\na u(x)/|\na u(x)|$, we can infer that
\begin{equation}
\label{nu-a}
\nu_a(x)= \na_{\xi } H (\na u(x)),
\end{equation}
by the $0$-homogeneity of $\na_\xi H$. If $u\in C^2(\ol{\Om})$, then the \textit{anisotropic mean curvature} of $\Ga$ (with respect to the inner normal) is defined as
$$
\cM_a (x) = -\frac{1}{N-1} \dv[\nu_a (x)]=-\frac{1}{N-1} \dv\left[\na_\xi H (\na u(x))\right], \ x \in \Ga.
$$
We shall say that $\Ga$ is \textit{H-mean convex} if it is of class $C^2$ and  $\cM_a\ge 0$ on $\Ga$. 
\par
An identity analogous to \eqref{reilly-identity} also holds for the so-called \textit{anisotropic laplacian}
$$
\De^a u = \dv \left[ H(\na u) \na_\xi H \left( \na u \right) \right],
$$
that corresponds to the choice $\Phi(\si)=\si^2/2$ in \eqref{anisotropic-torsion-general-Phi}.  In fact, 
if we notice that for the first and second anisotropic normal derivatives we have that
$$
u_{\nu_a}= \lan \na u , \nu_a\ran = \lan\na u , \na_{\xi } H ( \na u )\ran = H(\na u)
$$
and 
\begin{equation}
\label{eq_ununuanisotropa}
u_{\nu_a \nu_a}= \lan (\na^2 u) \, \nu_a , \nu_a \ran = \lan (\na^2 u)\,\na_{\xi}H(\na u),\na_{\xi}H(\na u)\ran,
\end{equation}
we obtain the identity (see also\cite{Xi}):
\begin{equation}
\label{anisotropic-mean-curvature-identity}
\De^a u = u_{\nu_a \nu_a} - (N-1)  H(\na u)\, \cM_a \ \mbox{ on } \ \Ga.
\end{equation}
 
\par
The \textit{anisotropic distance} of $x \in \ol{\Om} $ to the boundary $\Ga$ is the function defined by
\begin{equation}\label{eq:distanzaanisotropaduale}
d_\Ga^o(x)= \min_{y \in \Ga} H^o (x-y),  \quad x \in \ol{\Om} .
\end{equation}
For more details on the anisotropic distance and, more in general, in Minkowski spaces we refer to \cite{CM} (for treatments in Finsler and Riemaniann geometries see also \cite{LN} and \cite{Sk}).
\par
In the following sections, we shall just use the fact that, by definition \eqref{eq:distanzaanisotropaduale}, for any $x \in \Om$ it holds that $B^o_r\subset \Om$ for $r=d^o_\Ga(x)$ and, being any anisotropic ball $B^o_r (x)$ a convex set, if $x_0$ is a point on $\Ga$ realizing the minimum in \eqref{eq:distanzaanisotropaduale}, the line segment joining $x_0$ to $x$ is contained in $\ol{B^o_r (x)} \subset \ol{\Om}$.
In particular, as noticed in \cite{CM}, due to the Minkowskian
structure of the space, $x_0$ is joined to $x$ by a
geodesic, which is a segment issuing from $x_0$ to $x$ that goes along
the anisotropic normal direction $\nu_a = \na_{\xi} H (  \nu (x_0) )$.

\subsection{The anisotropic torsional rigidity in a Wulff shape}
In the set up described in the previous section, the solution of \eqref{anisotropicquasilinear-torsion-2} in the Wulff shape $B_r^o(x)$ is easily computed.

\begin{lem}[Solution in the Wulff shape]
Let $H$ be a norm in $\RR^N$ such that $H\in C^1(\RR^N\setminus\{ 0\})$ and $B^o$ is
strictly convex. Let  $w^r : \ol{B^o_r(x)}\to [0,\infty)$ be the function  defined by
$$
w^r(y)=\Psi(r)-\Psi \left(H^o (y-x) \right) \ \mbox{ for } \ y\in \ol{B_r^o(x)},
$$
where $\Psi$ is as usual the Young's conjugate of $\Phi$. Then $w^r$ is of class $C^1(\RR^N)$ and is a weak solution of the problem \eqref{anisotropic-torsion-general-Phi}.
\end{lem}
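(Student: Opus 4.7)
The plan is to verify the claim by direct calculation. Without loss of generality, I take $x=0$ and abbreviate $\rho(y)=H^o(y)$, so $w^r(y)=\Psi(r)-\Psi(\rho(y))$. The whole computation rests on two ingredients: the duality identities \eqref{H-grad-Ho-identity}--\eqref{H-nuovaproprietaaggiuntaperconto}, and the fact that $\psi=\Psi'$ is the inverse of $\phi=\Phi'$, with both functions vanishing at the origin.

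First I would compute $\na w^r$ and check regularity. Since $B^o$ is strictly convex, $H^o\in C^1(\RR^N\setminus\{0\})$, so the chain rule gives $\na w^r(y)=-\psi(\rho(y))\,\na_\eta H^o(y)$ for $y\ne 0$. To extend across the origin, note that $\na_\eta H^o$ is $0$-homogeneous and hence uniformly bounded near $0$, while $\psi(\rho(y))\to 0$ as $y\to 0$ because $\phi(0)=0$ forces $\psi(0)=0$. Consequently $\na w^r$ extends continuously to $0$ with value $0$, and the estimate $\Psi(\rho(y))=o(|y|)$ (from $\Psi(0)=\Psi'(0)=0$) gives the matching first-order Taylor expansion, proving $w^r\in C^1(\RR^N)$.

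Next I would plug $\na w^r$ into $\na\Phi_H(\xi)=\phi(H(\xi))\,\na_\xi H(\xi)$. By \eqref{H-grad-Ho-identity} and the evenness of $H$, one has $H(\na w^r)=\psi(\rho(y))$, whence $\phi(H(\na w^r))=\rho(y)$. By the $0$-homogeneity \eqref{0-homogeneous} of $\na_\xi H$ applied with $t=-\psi(\rho(y))<0$, and then by \eqref{H-nuovaproprietaaggiuntaperconto}, one finds $\na_\xi H(\na w^r)=-\na_\xi H(\na_\eta H^o(y))=-y/\rho(y)$. Multiplying these two factors collapses to the clean identity
\[
\na\Phi_H(\na w^r(y))=-y \quad \mbox{for all } y\in B^o_r(0)\setminus\{0\},
\]
and by continuity the formula holds on all of $B^o_r(0)$. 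Taking the divergence yields $-\dv\,\na\Phi_H(\na w^r)=N$ in the classical sense in $B^o_r(0)$, while the Dirichlet condition $w^r=0$ on $\pa B^o_r(0)$ is immediate from $\Psi(r)-\Psi(r)=0$.

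Finally, since $\na\Phi_H(\na w^r)=-y$ is in fact $C^\infty$, integration by parts against any $\vp\in W^{1,p}_0(B^o_r(0))$ reduces the weak formulation to the pointwise identity just obtained, so $w^r$ is a weak solution. The only delicate point is the apparent singularity of $\na_\eta H^o$ at the origin; this is the main obstacle, and it is neutralized precisely because it appears multiplied by the factor $\psi(\rho(y))$, which vanishes at the right rate, and it disappears entirely from the resulting vector field $-y$. Everything else is bookkeeping in the $H$--$H^o$ duality.
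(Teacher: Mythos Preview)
Your argument is essentially the paper's own proof: the same gradient computation, the same use of \eqref{H-grad-Ho-identity}, \eqref{0-homogeneous}, and \eqref{H-nuovaproprietaaggiuntaperconto} to collapse $\na\Phi_H(\na w^r)$ to $-y$, and the same conclusion. The only difference is that you spell out the $C^1$ regularity at the origin directly, whereas the paper cites \cite[Lemma 3.1]{CS}; one small slip in your write-up is the implication ``$B^o$ strictly convex $\Rightarrow H^o\in C^1(\RR^N\setminus\{0\})$'': by duality, strict convexity of $B^o$ is equivalent to $H\in C^1$, while $H^o\in C^1$ corresponds to strict convexity of $B$, so this step needs the appropriate hypothesis (which the paper implicitly uses through the cited reference).
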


\begin{proof}
We can always assume that $x=0$. It is clear that $w^r=0$ on $\pa B^o_r(x)$.  The $C^1(\RR^N)$-regularity of $w^r$ follows from our assumptions on $H$ and $\Phi$ and \cite[Lemma 3.1]{CS}. Moreover, we compute:
$$
\na w^r( \eta)=-\psi(H^o ( \eta))\,\na_\eta H^o( \eta) \ \mbox{ for } \ \eta \in B_r^o(x),
$$
where $\psi=\Psi'$.
Thus, \eqref{H-grad-Ho-identity} and \eqref{0-homogeneous} give that
\begin{multline*}
-\phi( H( \na w^r ))\, \na_{ \xi} H( \na w^r)=\phi(\psi(H^o ( \eta)))\,\na_\xi H \left( \na_\eta H^o( \eta) \right) =
\\ H^o ( \eta) \,\na_\xi H \left( \na_\eta H^o ( \eta) \right),
\end{multline*}
since $\phi$ and $\psi$ are inverse of one another.
Our claim follows from \eqref{H-nuovaproprietaaggiuntaperconto}.
\end{proof}

We can now derive an anisotropic version of Lemma \ref{lem:relationdist}. 
\begin{lem}[A bound from below]

\label{lem:anisotropic-relationdist-quasilinear}
Let $\Om\subset\RR^N$, $N\ge 2$, be a bounded domain with boundary $\Ga$. 
\par
Let $u\in C^0(\ol{\Om})\cap C^{1,\ga}(\Om)$, $0<\ga\le 1$, be the solution of \eqref{anisotropicquasilinear-torsion-2}. Then
\begin{equation}
\label{anisotropic-relationdist}
u(x)\ge \Psi( d^o_\Ga(x)) \ \mbox{ for every } x\in\ol{\Om}.
\end{equation}
\end{lem}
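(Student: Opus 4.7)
The plan is to mimic the comparison argument used in Lemma \ref{lem:relationdist} and Lemma \ref{lem:relationdist-quasilinear}, with the Euclidean ball replaced by the appropriate Wulff shape and the explicit torsional profile supplied by the preceding lemma.

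Fix $x\in\Omega$ and set $r=d^o_\Ga(x)$. By the very definition \eqref{eq:distanzaanisotropaduale} of $d^o_\Ga$, the Wulff shape $B^o_r(x)$ is contained in $\Om$ and its closure touches $\Ga$. Consider on $\ol{B^o_r(x)}$ the explicit function
\[
w^r(y)=\Psi(r)-\Psi\!\left(H^o(y-x)\right),
\]
which, by the lemma just proved, belongs to $C^1(\RR^N)$ and is a weak solution of \eqref{anisotropicquasilinear-torsion-2} in $B^o_r(x)$ with zero boundary values on $\pa B^o_r(x)$. Since $u\ge 0=w^r$ on $\pa B^o_r(x)$ and both functions satisfy the same equation with the same right-hand side, the weak comparison principle for the anisotropic quasilinear operator $-\dv\{\phi(H(\na\cdot))\na_\xi H(\na\cdot)\}$ (applicable under our assumptions \eqref{assumptions-Phi} on $\Phi$, cf.\ the strict convexity of the associated functional \eqref{Wulff-functional}) yields $u\ge w^r$ throughout $\ol{B^o_r(x)}$.

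Evaluating at the center $y=x$ and recalling that $H^o(0)=0$ and $\Psi(0)=0$ (the latter follows from $\Phi\ge 0$ and $\Phi(0)=0$ via the definition \eqref{Young-conjugate} of the Young conjugate), we obtain
\[
u(x)\ \ge\ w^r(x)\ =\ \Psi(r)\ =\ \Psi(d^o_\Ga(x)),
\]
which is \eqref{anisotropic-relationdist}. For boundary points $x\in\Ga$ the inequality is trivial since both sides vanish.

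The only nontrivial step is the invocation of the comparison principle for the anisotropic quasilinear operator; this is standard under the structural hypotheses \eqref{assumptions-Phi} on $\Phi_H=\Phi\circ H$ (strict monotonicity of $\xi\mapsto\na\Phi_H(\xi)$), and in fact the inequality $u\ge w^r$ can be deduced directly by testing the difference of the weak formulations against $(w^r-u)^+\in W^{1,p}_0(B^o_r(x))$, so no further ingredients beyond those already stated in the paper are needed.
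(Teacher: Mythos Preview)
Your proof is correct and follows essentially the same route as the paper: fix $x$, set $r=d^o_\Ga(x)$, compare $u$ with the explicit Wulff-shape solution $w^r$ on $B^o_r(x)$ via the weak comparison principle, and evaluate at the center. The paper's version is terser (it simply cites a reference for the comparison), while you spell out why $\Psi(0)=0$ and how the comparison can be obtained by testing against $(w^r-u)^+$, but the argument is the same.
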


\begin{proof}
We proceed as usual. For $x\in\Om$ we let $r=d^o_\Ga(x)$ and consider the ball $B^o_r(x)$. Thus, we obtain the comparison (see, e.g., \cite[Lemma 4.2]{CRS}) $u\ge w^r$ on $B^o_r(x)$, and hence at $x$ we get:
$$
u(x)\ge w^r(x)=\Psi(r)-\Psi(0)=\Psi(d^o_\Ga(x)),
$$
as claimed.
\end{proof}

Next, we generalize \eqref{anisotropic-mean-curvature-identity} to the case of the operator in \eqref{anisotropicquasilinear-torsion-2}.

\begin{prop}
Let $H\in C^2(\RR^N\setminus\{ 0\})$ and let $v$ be a function of class $C^2$ and such that $\na v\ne 0$ in a neighborhood of $\Ga$. Then, it holds that 
$$
\dv\left\{ \phi( H( \na v ))\, \na_{ \xi} H( \na v) \right\} =- (N-1)\,\phi( H( \na v )) \cM_a +  \phi'(H(\na v)) v_{\nu_a \nu_a} \ \mbox{ on }  \ \Ga.
$$
\par 
In particular, if $u$ is the solution of \eqref{anisotropicquasilinear-torsion-2}, then
\begin{equation}
\label{identity-phi-H}
\phi'(H(\na u)) u_{\nu_H \nu_H} = -N + (N-1)\,\phi( H( \na u )) \cM_a \ \text {on }  \Ga.
\end{equation}
\end{prop}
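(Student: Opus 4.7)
The plan is a straightforward product-rule calculation that mirrors the isotropic identity \eqref{reilly-identity}. First I split the divergence:
$$\dv\{\phi(H(\na v))\,\na_\xi H(\na v)\}=\na[\phi(H(\na v))]\cdot\na_\xi H(\na v)+\phi(H(\na v))\,\dv[\na_\xi H(\na v)],$$
and handle the two pieces separately. The hypothesis $\na v\ne 0$ near $\Ga$, together with the fact that (in the intended application, in particular for $v=u$) $\na v$ is collinear with the inward unit normal $\nu$ on $\Ga$, lets me identify $\na_\xi H(\na v)$ with $\nu_a$ on $\Ga$ via the $0$-homogeneity \eqref{0-homogeneous} of $\na_\xi H$.

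For the first summand, the chain rule gives $\na[\phi(H(\na v))]=\phi'(H(\na v))\,\na^2 v\,\na_\xi H(\na v)$, since $\pa_i[H(\na v)]=(\pa_jH)(\na v)\,\pa_{ij}v$. Taking the inner product with $\na_\xi H(\na v)$ and using the identification above on $\Ga$, I obtain $\phi'(H(\na v))\,\lan\na^2v\,\nu_a,\nu_a\ran$, which is exactly $\phi'(H(\na v))\,v_{\nu_a\nu_a}$ thanks to \eqref{eq_ununuanisotropa}.

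For the second summand, I invoke the very definition of $\cM_a$: the vector field $\na_\xi H(\na v)$ extends $\nu_a$ to a neighborhood of $\Ga$, so $\dv[\na_\xi H(\na v)]=-(N-1)\,\cM_a$ on $\Ga$. Summing the two contributions yields the first identity of the proposition. To derive \eqref{identity-phi-H}, I specialize $v=u$: the left-hand side then equals $-N$ on $\Ga$ by \eqref{anisotropicquasilinear-torsion-2}, and a trivial rearrangement produces the stated formula.

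The one delicate point is the independence of $\dv[\na_\xi H(\na v)]|_\Ga$ from the chosen defining function. One verifies it by noting that two such functions produce gradients that differ by a positive factor on $\Ga$, on which $\na_\xi H$ is $0$-homogeneous, so their boundary traces coincide; the standard tangential/normal decomposition of the divergence then shows the answer is intrinsic to $\Ga$ and $H$, the normal contribution being controlled by $\lan\nu_a,\nu\ran=H(\nu)$ via Euler's identity \eqref{eq:proprietacheserveinconti}.
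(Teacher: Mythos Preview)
Your proof is correct and close in spirit to the paper's, but the product-rule factorization differs. You split $\phi(H(\na v))\cdot\na_\xi H(\na v)$ directly, so that the second summand is $\phi(H(\na v))\,\dv[\na_\xi H(\na v)]$, which you identify with $-(N-1)\phi(H(\na v))\,\cM_a$ straight from the definition of $\cM_a$. The paper instead writes the field as $\dfrac{\phi(H(\na v))}{H(\na v)}\cdot\bigl[H(\na v)\,\na_\xi H(\na v)\bigr]$, recognizes the divergence of the second factor as the anisotropic Laplacian $\De^a v$, and then invokes the already-established identity \eqref{anisotropic-mean-curvature-identity} (the special case $\Phi(\si)=\si^2/2$) to turn $\De^a v - v_{\nu_a\nu_a}$ into $-(N-1)H(\na v)\,\cM_a$. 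Your route is a bit more direct and self-contained, since it does not pass through the quadratic case; the paper's route, on the other hand, sidesteps the well-definedness discussion in your last paragraph, that issue having been absorbed once and for all into \eqref{anisotropic-mean-curvature-identity}.
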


\begin{proof}
By the Leibnitz formula for products, we have that 
\begin{multline*}
\dv\left\{ \phi( H( \na v ))\, \na_{ \xi} H( \na v) \right\} = \\
\frac{ \phi( H( \na v ))}{H(\na v)} \De^a v + H( \na v )\,\na\left[\frac{ \phi( H( \na v ))}{H(\na v)}\right]\cdot\na_\xi H(\na v) = 
\frac{ \phi( H( \na v ))}{H(\na v)} \De^a v + \\
\frac{H(\na v)\,\phi'(H(\na v))-\phi(H(\na v))}{H(\na v)}\,\lan (\na^2 v)\,\na_{\xi}H(\na v),\na_{\xi}H(\na v)\ran=
\\
\frac{ \phi( H( \na v ))}{H(\na v)} \left[\De^a v-v_{\nu_a \nu_a}\right] +  \phi'(H(\na v)) v_{\nu_a \nu_a}, 
\end{multline*}
where we have also used \eqref{eq_ununuanisotropa}.
Identity \eqref{anisotropic-mean-curvature-identity} then gives the first claim, and hence \eqref{identity-phi-H} follows at once.
\end{proof}

We now need a counterpart of Lemma \ref{lem:bound-gradient}. To avoid further difficulties, we shall limit our discussion to the case in which $\Ga$ is $H$-mean convex. We will use facts contained in \cite{CFV}. The statement of Lemma \ref{lem:anisotropic-bound-gradient-quasilinear} here below is slightly different from that contained in \cite{CFV}, because more naturally we chose to present the relevant $P$-function in terms of the Young conjugate.

\begin{lem}[Gradient estimate]
\label{lem:anisotropic-bound-gradient-quasilinear}
Let $\Om\subset\RR^N$, $N\ge 2$, be a bounded domain with H-mean convex boundary $\Ga$.
Let $u \in W^{1,p}(\Om) \cap L^\infty(\Om)$  be the weak solution of \eqref{anisotropicquasilinear-torsion-2}.
Then the function defined on $\ol{\Om}$ by
$$
P=\Psi \left( \phi \left( H(\na u ) \right) \right)+N\,\Bigl[u-\max_{\ol{\Om}} u\Bigr] 
$$ 
attains its maximum at some critical point of $u$. In particular, it holds that
$$
\Psi \left( \phi \left( H(\na u ) \right) \right) \le N\,\Bigl[\max_{\ol{\Om}} u-u\Bigr] \ \mbox{ on } \ \ol{\Om}.
$$
\end{lem}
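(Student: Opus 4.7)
The strategy will mirror the isotropic argument of Lemma \ref{lem:bound-gradient-quasilinear-torsion}: establish (i) an interior differential inequality for $P$ away from the critical set of $u$, and (ii) a one-sided boundary inequality $P_{\nu_a}\ge 0$ on $\Ga$. The strong maximum principle and Hopf's lemma will then force the maximum of $P$ over $\ol\Om$ to be attained either at a critical point of $u$, where $H(\na u)=0$ and hence $P=N[u-\max_{\ol\Om}u]\le 0$, or by having $P$ be identically constant on $\ol\Om$, in which case the same evaluation applies. For regularity, as in the isotropic case, I would first assume $\Ga\in C^{2,\ga}$ and use \cite{To, Li} to get $u\in C^{1,\ga}(\ol\Om)$; then, via the strong comparison principle together with a Wulff-shape barrier, conclude that $H(\na u)>0$ in a neighborhood of $\Ga$. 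This upgrades $u$ to $C^2$ near $\Ga$, so $P\in C^1(\ol\Om)\cap C^2(\Om\setminus\{\na u=0\})$.

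For the interior inequality, the key elementary observation is $\tfrac{d}{d\si}\Psi(\phi(\si))=\si\,\phi'(\si)$, since $\psi=\Psi'$ inverts $\phi$. Writing $\si=H(\na u)$, I would then compute a degenerate second-order operator, patterned on the linearization of \eqref{anisotropicquasilinear-torsion-2}, applied to $P$. The target inequality has the schematic form
\begin{equation*}
\tr\bigl[A(\na u)\,\na^2 P\bigr]+\bigl\langle b(\na u,\na P),\,\na P\bigr\rangle\ge 0 \ \mbox{ in } \ \Om\setminus\{\na u=0\},
\end{equation*}
with $A(\xi)=\na_\xi^2\Phi_H(\xi)$ positive definite for $\xi\ne 0$ under assumption \eqref{assumptions-Phi}. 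The mechanism is the anisotropic analog of the one at work in Lemma \ref{lem:bound-gradient-quasilinear-torsion}: an anisotropic Cauchy--Schwarz inequality absorbs the contribution of $|\na P|^2$ into the Hessian term, and the coefficient $N$ in front of $[u-\max_{\ol\Om}u]$ is precisely the one that cancels the term produced by the source $N$ in \eqref{anisotropicquasilinear-torsion-2}.

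For the boundary inequality, since $u>0$ in $\Om$ and $u=0$ on $\Ga$, the inward Euclidean normal is $\nu=\na u/|\na u|$, so $\nu_a=\na_\xi H(\na u)$ on $\Ga$ by $0$-homogeneity, in agreement with \eqref{nu-a}. A direct computation of $\na P$, combined with Euler's identity \eqref{eq:proprietacheserveinconti} and the definition \eqref{eq_ununuanisotropa} of $u_{\nu_a\nu_a}$, yields
\begin{equation*}
P_{\nu_a}=H(\na u)\,\bigl[\phi'(H(\na u))\,u_{\nu_a\nu_a}+N\bigr] \ \mbox{ on } \ \Ga,
\end{equation*}
and inserting the boundary identity \eqref{identity-phi-H} collapses the bracket to
\begin{equation*}
P_{\nu_a}=(N-1)\,H(\na u)\,\phi(H(\na u))\,\cM_a\ge 0 \ \mbox{ on } \ \Ga,
\end{equation*}
by $H$-mean convexity. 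Since $\nu_a$ is transversal to $\Ga$, this together with the interior inequality of the previous step closes the argument via Hopf's lemma.

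The main obstacle is the interior step. The delicate point is to identify the correct first-order drift $b(\na u,\na P)$ so that, after substituting the equation \eqref{anisotropicquasilinear-torsion-2}, the quadratic-in-$\na P$ terms produced by squaring the differential relation $\na_\xi\Phi_H(\na u)\cdot\na\cdot=-N+\cdots$ are dominated by the Hessian term via a weighted Cauchy--Schwarz inequality. In the isotropic case this is carried out by hand in the proof of Lemma \ref{lem:bound-gradient-quasilinear-torsion}; in the anisotropic case the bookkeeping is substantially heavier, and the cleanest route is to adapt the $P$-function computations of \cite{CFV} to the formulation $P=\Psi(\phi(H(\na u)))+N[u-\max_{\ol\Om}u]$ adopted here, which is the main technical content of the proof.
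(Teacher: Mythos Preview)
Your proposal is correct and follows essentially the same route as the paper: the boundary computation of $P_{\nu_a}=(N-1)\,H(\na u)\,\phi(H(\na u))\,\cM_a\ge 0$ via \eqref{identity-phi-H} is identical to the paper's, and for the interior differential inequality the paper, like you, defers to the $P$-function computations of \cite{CFV} (specifically \cite[Proposition 4.1]{CFV} with $B=\Phi$) rather than redoing them by hand.
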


\begin{proof}
The necessary regularity can be obtained as in the proof of Lemma \ref{lem:bound-gradient-quasilinear-torsion}. Next, by taking advantage of \cite[Proposition 4.1]{CFV} 
with $B= \Phi$ (the relevant regularity assumptions there assumed can be relaxed by an appropriate approximation argument), we have that the function $P$ satisfies the maximum principle away from the critical points of $u$ in $\Om$.
In other words, we have that the maximum of $P$ is attained either on $\Ga$ or at a
critical point of $u$ at which $P \le 0$. 
\par
Thus, we are left to prove that $P$ cannot attain its maximum on $\Ga$. Suppose that $P$ is not constant and attains its maximum at a point $\ol{x}\in \Ga$.
Notice that, by Hopf's lemma (see \cite{CT} or \cite{CRS}), $\Ga$ does not contain any critical point of $u$. Hence, we can apply Hopf's lemma to $P$ in some neighborhood of $\ol{x}$, and infer that $P_{\nu_a}(\ol{x})<0$.
\par
On the other hand, we compute that
\begin{multline*}
\na P= \Psi'(\phi(H(\na u)))\,\phi'(H(\na u))\, [\na^2 u] \na_\xi H(\na u) + N\,\na u = \\
H(\na u)\,\phi'(H(\na u)) [\na^2 u] \na_\xi H(\na u) + N\,\na u.
\end{multline*}
Hence,  \eqref{eq:proprietacheserveinconti}, \eqref{nu-a}, and \eqref{eq_ununuanisotropa} give that 
\begin{multline*}
P_{\nu_a}= H(\na u)\,\phi'(H(\na u)) \lan [\na^2 u] \na_\xi H(\na u), \na_\xi H(\na u)\ran + 
N\,\lan\na u, \na_\xi H (\na u)\ran = \\
 H(\na u) \left\lbrace \phi'(H(\na u)) u_{\nu_a \nu_a}  + N\right\rbrace = 
(N-1) \,H(\na u)\,\phi(H(\na u))\,\cM_a  \ge 0 \ \text{ on } \Ga,
\end{multline*}
being $\cM_a\ge 0$ on $\Ga$. Therefore, we reached a contradiction, which means that 
either the maximum of $P$ is attained at a critical point of $u$ or $P$ is constant on $\ol{\Om}$. In any case, we conclude that $P\le 0$ on $\ol{\Om}$.
\end{proof}

We define the {\it anisotropic inradius} $r^o_{\Om}$ by
\begin{equation}
r^o_{\Om}= \max_{\ol{\Om}}d^o_\Ga
\end{equation}
and call {\it anisotropic incenter} a point $x^o_{\Om}$ that attains the maximum.

\begin{proof}[Proof of Theorem \ref{th:dist-estimate-anisotropic}]
The proof runs similarly to that of Theorem \ref{th:maximum-point-torsion-quasilinear}.
We can apply Lemma \ref{lem:anisotropic-bound-gradient-quasilinear} and obtain that
\begin{equation}\label{eq:anisotropicboundgradientutile}
H(\na u) \le \psi(\Psi^{-1}(N\,[u(z)-u])) \ \mbox{ on } \ \ol{\Om}.
\end{equation}
By using the function $\chi$ defined in \eqref{def-chi}, if we take $x\in\Om$ and $y\in\Ga$ such that $d^o_\Ga(x)= H^o (x-y)$, we can compute:
\begin{multline*}
\chi(u(z))-\chi(u(z)-u(x))=\int_0^1 \frac{d}{dt} \chi(u(z)-u(x+t\,(y-x))\,dt= 
\\
\int_0^1  \frac{ \lan\na [u(z)- u(x+t\,(y-x))] , (y-x)\ran}{\psi(\Psi^{-1} (N\,[u(z)- u(x+t\,(y-x))]))}\,dt.
\end{multline*}
Next, we apply \eqref{anisotropic-holder} and infer that
\begin{multline*}
 \chi(u(z))-\chi(u(z)-u(x))\le
\\
 \int_0^1  \frac{ H \left( \na [u(x+t\,(y-x))] \right) }{\psi(\Psi^{-1} (N\,[u(z)- u(x+t\,(y-x))]))} H_0 (y-x) \,dt \le
H_0 (y-x)=d^o_\Ga(x),
\end{multline*}
where in the second inequality we used \eqref{eq:anisotropicboundgradientutile}.
Thus, choosing $x=z$ gives that $\chi(u(z)) \le d^o_\Ga(z)$.
\par
The rest of the proof runs as that of Theorem \ref{th:maximum-point-torsion-quasilinear}, provided $x_\Om$ and $d_\Ga(z)$ are replaced by $x^o_\Om$ and $d^o_\Ga(z)$, and Lemma \ref{lem:relationdist-quasilinear} is replaced by Lemma \ref{lem:anisotropic-relationdist-quasilinear}.
\end{proof}

\begin{rem}
It is clear that, repeating the arguments used in the proof of Corollary \ref{cor:bound-elasto-plastic} yields 
$$
\frac{d^o_\Ga(z)}{r^o_\Om}\ge \left(\frac{c}{N C}\right)^{1/p}.
$$
\end{rem}

\medskip

\section*{Acknowledgements}
The paper is partially supported by the Gruppo Nazionale per l'Analisi Matematica, la Probabilit\`a e le loro Applicazioni (GNAMPA) dell'Istituto Nazionale di Alta Matematica (INdAM). The second author is also supported by the Australian Research Council Discovery Project DP170104880
``N.E.W. Nonlocal Equations at Work''.

\bibliographystyle{plain}                              
%
%
\bibliography{References}

\end{document}